\pgfplotsset{compat=1.14}
\newcommand{\fto}[1]{\stackrel{#1}{\to}}
\newcommand{\isom}{\cong}
\newcommand{\Ext}{\textnormal{Ext}}
\newcommand{\Hom}{\textnormal{Hom}}
\newcommand{\coker}{\textnormal{Coker}\,}
\newcommand{\pic}{\textnormal{Pic}}
\newcommand{\Spec}{\textnormal{Spec}}
\newcommand{\res}{\textnormal{res}}
\newcommand{\rank}{\textnormal{rank}}
\newcommand{\ZZ}{\mathbb Z}
\newcommand{\QQ}{\mathbb Q}
\newcommand{\CC}{\mathbb C}
\newcommand{\PP}{\mathbb P}
\newcommand{\OO}{\mathcal O}
\newcommand{\II}{\mathcal I}
\newtheorem{theo}{Theorem}[section]
\newtheorem{thm}[theo]{Theorem}
\newtheorem*{thm*}{Theorem}
\newtheorem{prop}[theo]{Proposition}
\newtheorem*{prop*}{Proposition}
\newtheorem{rem}[theo]{Remark}
\newtheorem*{rem*}{Remark}
\newtheorem{lemma}[theo]{Lemma}
\newtheorem*{lemma*}{Lemma}
\newtheorem{cor}[theo]{Corollary}
\newtheorem*{cor*}{Corollary}
\newtheorem*{conj*}{Conjecture}
\newtheorem*{ques*}{Question}
\newtheorem*{claim*}{Claim}
\theoremstyle{definition}
\newtheorem{deff}[theo]{Definition}
\newtheorem*{deff*}{Definition}
\newtheorem*{example*}{Example}
\newtheorem*{exmp*}{Example}
\newtheorem*{prob*}{Problem}
\newtheorem*{rep@theorem}{\rep@title}
\newcommand{\newreptheorem}[2]{%
\newenvironment{rep#1}[1]{%
 \def\rep@title{#2 \ref{##1}}%
 \begin{rep@theorem}}%
 {\end{rep@theorem}}}
\newtheorem{theorem}[theo]{Theorem}
\newtheorem*{theorem*}{Theorem}
\begin{document}
\title{Irreducibility and Singularities of Some Nested Hilbert Schemes}
\author{Tim Ryan}
\address{Department of Mathematics, University of Michigan, Ann Arbor, MI}
\email{rtimothy@umich.edu}
\author{Gregory Taylor}
\address{Department of Mathematics, Statistics, and Computer Science, University of Illinois at Chicago, Chicago, IL}
\email{gtaylo9@uic.edu}

\begin{abstract}
Let $S$ be a smooth projective surface over $\CC$. We study the local and global geometry of the nested Hilbert scheme of points $S^{[n,n+1,n+2]}$. In particular, we show that $S^{[n,n+1,n+2]}$ is an irreducible local complete intersection with klt singularities. In addition, we compute the Picard group of $S^{[n,n+1,n+2]}$ when $h^1(S,\mathcal{O}_S) = 0$. From the irreducibility of $S^{[n,n+1,n+2]}$, we deduce irreducibility for four other infinite families of nested Hilbert schemes. We give the first explicit example of a reducible nested Hilbert scheme, which allows us to show that $S^{[n_1,\dots,n_k]}$ is reducible for $k > 22$. 
\end{abstract}
\maketitle

\section{Introduction}

Let $S$ be a smooth algebraic surface over $\CC$. For an increasing tuple of positive integers $n_1 < \cdots < n_k$, the \emph{nested Hilbert scheme of points}, denoted $S^{[n_1,\dots,n_k]}$, parametrizes nested subschemes $\xi_{n_1} \subseteq \cdots \subseteq \xi_{n_k}$ of $S$, where $\xi_{n_i}$ is a finite scheme of length $n_i$. In this paper, our primary focus is the local and global geometry of $S^{[n,n+1,n+2]}$, i.e. the scheme parametrizing two-step complete nestings. These results can be understood intuitively as statements about the simultaneous smoothings of such nestings of finite subschemes of $S$.

\begin{thm}\label{thm:TwoStepIntro}
	Let $S$ be a smooth projective surface over $\CC$. 
	\begin{enumerate}
		\item (Theorem \ref{thm:TwoStepIrreducible}) The nested Hilbert scheme $S^{[n,n+1,n+2]}$ is irreducible of dimension $2n+4$.
		\item (Proposition \ref{prop:TwoStepIsLCI}, Corollary \ref{cor:TwoStepRational}) $S^{[n,n+1,n+2]}$ is a local complete intersection with klt singularities.
		\item (Theorem \ref{thm:PicardGroup}) If $h^1(S,\mathcal{O}_S) = 0$, then $\pic(S^{[n,n+1,n+2]}) \cong \pic(S)^3 \oplus \ZZ^3$.
	\end{enumerate}
\end{thm}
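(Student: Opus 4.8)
The plan is to establish the three parts in order, with (2) and (3) building on (1).

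\textbf{Part (1).} A nesting $\xi_n\subseteq\xi_{n+1}\subseteq\xi_{n+2}$ is precisely a pair of flags with matching middle term, so $S^{[n,n+1,n+2]}\cong S^{[n,n+1]}\times_{S^{[n+1]}}S^{[n+1,n+2]}$. All three of $S^{[n,n+1]}$, $S^{[n+1]}$, $S^{[n+1,n+2]}$ are smooth and irreducible---Fogarty for Hilbert schemes of points, and for $S^{[m,m+1]}$ its classical description as $\mathrm{Bl}_{Z_m}(S^{[m]}\times S)$ along the universal subscheme---of dimensions $2n+2$, $2n+2$, $2n+4$. So $S^{[n,n+1,n+2]}$ is cut out in the smooth $(4n+6)$-fold $S^{[n,n+1]}\times S^{[n+1,n+2]}$ by the diagonal of $S^{[n+1]}\times S^{[n+1]}$, regularly embedded of codimension $2n+2$; hence every component has dimension $\geq 2n+4$. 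The reverse bound $\dim S^{[n,n+1,n+2]}\leq 2n+4$ is the crux. I would stratify by the support type of the flag---the Hilbert function of $\xi_{n+2}$ at each point, and where $\Supp(\xi_{n+1}/\xi_n)$ and $\Supp(\xi_{n+2}/\xi_{n+1})$ sit relative to the nesting---and bound each stratum using $\dim\mathrm{Hilb}^\ell_0(\mathbb A^2)=\ell-1$ and the known dimensions of loci of punctual subschemes with prescribed socle. The open stratum of $n+2$ distinct reduced points fibers over a configuration space and has dimension exactly $2n+4$, while every other stratum carries a nontrivial punctual factor and has dimension $\leq 2n+3$. Granting this, $S^{[n,n+1,n+2]}$ is equidimensional of dimension $2n+4$, the complement of the open stratum has dimension $\leq 2n+3$, and the open stratum is irreducible; since an equidimensional scheme all of whose top-dimensional points lie in the closure of a single irreducible piece is irreducible, this finishes (1). \emph{The stratified dimension estimate is the main work.}

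\textbf{Part (2).} The diagonal being a length-$(2n+2)$ regular sequence in the smooth ambient product, together with $\dim=2n+4$, immediately gives the local complete intersection property; in particular $S^{[n,n+1,n+2]}$ is Cohen--Macaulay and Gorenstein, and a short additional check that the singular locus has codimension $\geq 2$ supplies $R_1$, so with $S_2$ it is normal. For a normal Gorenstein variety, \emph{klt, canonical, and rational} singularities coincide (Elkik; Kempf; Koll\'ar--Mori), so it remains to prove rational singularities. The forgetful morphism $S^{[n,n+1,n+2]}\to S^{[n,n+1]}$ is flat (miracle flatness) with fiber over $(\xi_n\subseteq\xi_{n+1})$ the blow-up $\mathrm{Bl}_{\xi_{n+1}}(S)$ of the surface along the length-$(n+1)$ subscheme $\xi_{n+1}$, a surface with at worst rational singularities. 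I would resolve this family fiberwise, obtaining a resolution $\pi\colon\widetilde X\to S^{[n,n+1,n+2]}$ whose fiber over each singular point is the exceptional divisor of the resolution of a rational surface singularity, hence has no higher $\OO$-cohomology; then $R^{>0}\pi_*\OO_{\widetilde X}=0$, so $S^{[n,n+1,n+2]}$ has rational and hence klt singularities. Alternatively one reduces \'etale-locally to the punctual models and identifies them as determinantal varieties with rational singularities. \emph{Establishing rationality is, I expect, the hardest step}: lci alone does not suffice---simple elliptic surface singularities are Gorenstein lci but not rational---so the Hilbert-scheme geometry must be used.

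\textbf{Part (3).} Assume $h^1(S,\OO_S)=0$. The inputs are Fogarty's $\pic(S^{[m]})\cong\pic(S)\oplus\ZZ$ (for $m\geq2$) and, via $S^{[m,m+1]}=\mathrm{Bl}_{Z_m}(S^{[m]}\times S)$ with $Z_m$ irreducible of codimension two, $\pic(S^{[m,m+1]})\cong\pic(S)^2\oplus\ZZ^2$. The two forgetful morphisms $\alpha\colon S^{[n,n+1,n+2]}\to S^{[n,n+1]}$ and $\beta\colon S^{[n,n+1,n+2]}\to S^{[n+1,n+2]}$ then contribute, via $\alpha^*$ and $\beta^*$, a subgroup of $\pic(S^{[n,n+1,n+2]})$ which---because $\alpha$ and $\beta$ share the common factor $S^{[n+1]}$, whose $\pic(S)\oplus\ZZ$ pulls back into both---has the shape $\pic(S)^3\oplus\ZZ^3$: three ``positional'' $\pic(S)$-classes recording the locations of $\xi_n$ and of the two added points, and three divisor classes supported on the boundary $S^{[n,n+1,n+2]}\setminus U$, $U$ the open locus of $n+2$ distinct reduced points. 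That these generate $\pic$ I would prove by identifying $U\cong\mathrm{Conf}_{n+2}(S)/\Sigma_n$, computing $\pic(U)\cong\pic(S)^3$ up to $2$-torsion from $\pic(\mathrm{Conf}_{n+2}(S))=\pic(S)^{n+2}$ (valid since $h^1(\OO_S)=0$) and $\Sigma_n$-invariants, and then---using normality from parts (1)--(2)---applying the exact sequence $\bigoplus_i\ZZ[D_i]\to\pic(S^{[n,n+1,n+2]})\to\pic(U)\to0$ over the boundary primes $D_i$, together with $\OO^*(S^{[n,n+1,n+2]})=\CC^*$, to see that the listed classes exhaust $\pic$ and are independent (a suitable half of the non-reduced boundary absorbing the $2$-torsion of $\pic(U)$). \emph{The delicate point is the boundary bookkeeping}: enumerating the codimension-one boundary components, matching them to the pulled-back divisor classes, and checking that the relations among them are exactly those imposed by the shared factor $S^{[n+1]}$.
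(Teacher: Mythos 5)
Your part (1) correctly reduces everything to a stratified dimension estimate, but that estimate \emph{is} the theorem and you leave it at ``granting this'': the paper's Section \ref{sec: irreducibility} spends a sequence of lemmas bounding the codimension of the loci $W_{i,[n,n+1]}$ where $\II_{Z_{[n,n+1]}}$ needs $i$ generators (equivalently, bounding the punctual strata via Brian\c{c}on-type counts), and these are precisely the bounds that fail for long nestings (Theorem \ref{thm:ReducibleExample}), so they cannot be waved through. The more serious problem is part (2). The forgetful map $S^{[n,n+1,n+2]}\to S^{[n,n+1]}$ is not flat and its fibers are not $\mathrm{Bl}_{\xi_{n+1}}S$: the fiber over $(\xi_n,\xi_{n+1})$ is $\PP(\II_{\xi_{n+1}})$, which over a point where $\II_{\xi_{n+1}}$ needs $\mu$ generators contains a $\PP^{\mu-1}$. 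Taking $\xi_{n+1}$ to contain $\mathbb{V}(\mathfrak{m}^3)$ (possible once $n+1\geq 6$) gives $\mu=4$ and a three-dimensional fiber component, so the fiber is reducible and of jumping dimension, miracle flatness does not apply, and the proposed fiberwise resolution by rational surface singularities does not exist; the fallback remark about determinantal varieties is not a proof. The paper's replacement for this step is its main construction (Theorem \ref{thm:ResolutionIntro}): blow up the two irreducible components $W_1\cong S^{[1,n,n+1]}$ and $W_2\cong S^{[n,n+1]}$ of $Z_{[n,n+1]}$ one at a time inside $S\times S^{[n,n+1]}$, prove each blowup smooth via Tikhomirov's rank Kodaira--Spencer criterion, and observe that the resulting resolution is small; combined with Gorenstein this yields log terminal. (Your lci argument via the fiber product over $S^{[n+1]}$ is correct given the dimension count, and is in fact cleaner than the paper's Hilbert--Burch presentation of $\PP(\II_{Z_{[n,n+1]}})$.)

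For part (3) you invoke the excision sequence $\bigoplus_i\ZZ[D_i]\to\pic\to\pic(U)\to 0$, but on a normal singular variety that sequence computes the divisor class group, not the Picard group, and the distinction is exactly the content here: the paper shows $S^{[n,n+1,n+2]}$ admits a small resolution and hence is not $\QQ$-factorial, so $\mathrm{Cl}\cong\pic(S)^3\oplus\ZZ^4$ (computed on the small resolution, which agrees with $S^{[n,n+1,n+2]}$ in codimension one) strictly contains $\pic\cong\pic(S)^3\oplus\ZZ^3$. Without an argument identifying which Weil divisors are Cartier, the boundary bookkeeping cannot close; the paper's identification again rests on the explicit small resolution, which is the missing idea running through all three parts of your proposal.
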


To study Hilbert schemes of complete nestings (where $n_i = n_{i-1} + 1$ for all $i$), we systematically employ the \emph{residual point} maps. Let $\xi_n \subseteq \xi_{n+1}$ be subschemes of length $n$ and $n+1$ respectively. The residual point, i.e. the point where the two subschemes disagree, is well-defined, which gives us a map to $S$. As an application of this technique, we use the irreducibility statement in Theorem \ref{thm:TwoStepIntro} to conclude the irreducibility of four additional infinite families of nested Hilbert schemes. In particular, we obtain a more elementary proof that $S^{[n,n+2]}$ is irreducible, which was proved by Geertsen and Hirschowitz using liason methods \cite{GH04}.

\begin{cor}\label{cor:IrreducibilityIntro}
	Let $S$ be a smooth projective surface over $\CC$. The following nested Hilbert schemes are irreducible 
	\begin{enumerate}
	    \item (Corollary \ref{cor:[n,n+2] irreducible}) $S^{[n,n+2]}$,
	    \item (Proposition \ref{prop: [1,n,n+1,n+2] irred}) $S^{[1,n,n+1,n+2]}$,
	    \item (Corollary \ref{cor: [1,n+1,n+2],[1,n,n+2] irred}) $S^{[1,n+1,n+2]}$, and $S^{[1,n,n+2]}$.
	\end{enumerate}
\end{cor}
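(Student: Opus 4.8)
The plan is to derive all four irreducibility statements from that of $S^{[n,n+1,n+2]}$ (Theorem~\ref{thm:TwoStepIntro}(1)) by combining two soft facts about nested Hilbert schemes. The first is that a forgetful map dropping a single member of the flag is surjective on closed points: if $\eta\subseteq\zeta$ are finite subschemes with $\ell(\zeta)-\ell(\eta)=d$, then $\ker(\OO_\zeta\onto\OO_\eta)$ is a module of length $d$, hence has submodules of every length $e\le d$, and the corresponding quotients of $\OO_\zeta$ realize subschemes of every intermediate length between $\eta$ and $\zeta$; since the image of an irreducible scheme has irreducible closure, the target of such a map is irreducible whenever its source is. Applying this with $\eta=\xi_n$, $\zeta=\xi_{n+2}$ (so $d=2$) shows that $S^{[n,n+1,n+2]}\to S^{[n,n+2]}$ is surjective, which proves (1).

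The second fact is that for a finite flat morphism onto an irreducible base, every irreducible component of the source surjects onto the base — at the generic point of such a component the local ring is zero-dimensional, and by the dimension formula $\dim B=\dim A+\dim(B/\mathfrak m_A B)$ a flat local homomorphism $A\to B$ with $\dim B=0$ forces $\dim A=0$, i.e. the image is the generic point — so all these components have dimension $\dim(\text{base})$. To exploit this for (2), I would first observe that $S^{[1,n]}$ is the universal family $\mathcal{Z}_n\subseteq S^{[n]}\times S$: a $T$-point of $S^{[1,n]}$ is a flat nested family $\mathcal{Z}_1\subseteq\mathcal{Z}_n$ over $T$ of degrees $1$ and $n$, and the degree-$1$ member is precisely a section of the degree-$n$ member, i.e. a $T$-point of $\mathcal{Z}_n$. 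Since a flag $\xi_1\subseteq\xi_n\subseteq\xi_{n+1}\subseteq\xi_{n+2}$ is the datum of a point of $S^{[1,n]}$ and a point of $S^{[n,n+1,n+2]}$ sharing the same $\xi_n$, this gives $S^{[1,n,n+1,n+2]}\cong\mathcal{Z}_n\times_{S^{[n]}}S^{[n,n+1,n+2]}$, so the forgetful map $\pi\colon S^{[1,n,n+1,n+2]}\to S^{[n,n+1,n+2]}$ is a base change of the finite flat degree-$n$ map $\mathcal{Z}_n\to S^{[n]}$ and is therefore itself finite and flat of degree $n$. As $S^{[n,n+1,n+2]}$ is irreducible of dimension $2n+4$, every irreducible component of $S^{[1,n,n+1,n+2]}$ has dimension $2n+4$.

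It now suffices to exhibit a dense irreducible open subset of $S^{[1,n,n+1,n+2]}$. Let $F=\{(p_1,\dots,p_{n+2})\in S^{n+2}: p_i\ne p_j \text{ for } i\ne j\}$ be the ordered configuration space, which is irreducible, and consider the morphism $F\to S^{[1,n,n+1,n+2]}$ carrying $(p_1,\dots,p_{n+2})$ to the reduced flag $\{p_1\}\subseteq\{p_1,\dots,p_n\}\subseteq\{p_1,\dots,p_{n+1}\}\subseteq\{p_1,\dots,p_{n+2}\}$. Its image is exactly the open locus $U\subseteq S^{[1,n,n+1,n+2]}$ where $\xi_{n+2}$ is reduced, since any flag with $\xi_{n+2}$ reduced is a flag of subsets of its $n+2$ support points; hence $U$ is irreducible. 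Finally, $S^{[1,n,n+1,n+2]}\setminus U$ is the $\pi$-preimage of the proper closed locus in $S^{[n,n+1,n+2]}$ where $\xi_{n+2}$ is non-reduced, so it has dimension $\le 2n+3$ because $\pi$ is finite; as every component of $S^{[1,n,n+1,n+2]}$ has dimension $2n+4$, the open set $U$ is dense, and therefore $S^{[1,n,n+1,n+2]}=\overline{U}$ is irreducible, which is (2).

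Part (3) then follows from (2) via the first fact: the forgetful maps $S^{[1,n,n+1,n+2]}\to S^{[1,n+1,n+2]}$ (drop $\xi_n$, interpolated between $\xi_1$ and $\xi_{n+1}$) and $S^{[1,n,n+1,n+2]}\to S^{[1,n,n+2]}$ (drop $\xi_{n+1}$, interpolated between $\xi_n$ and $\xi_{n+2}$) are surjective on closed points, so their targets are irreducible; the few degenerate small-$n$ cases, such as $S^{[1,2,3]}$, are themselves instances of $S^{[n,n+1,n+2]}$ and are covered by the Theorem. I expect the main obstacle to be part (2), and within it the flatness of $\pi$: this is what rules out spurious lower-dimensional components of $S^{[1,n,n+1,n+2]}$ supported over the locus where $\xi_{n+2}$ degenerates, and it only becomes available once $S^{[1,n]}$ is recognized as the flat universal family; granted flatness, the configuration-space model makes the generic component transparent.
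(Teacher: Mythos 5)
Your proposal is correct and follows essentially the same route as the paper: everything is reduced to the irreducibility of $S^{[n,n+1,n+2]}$ via surjective forgetful maps, and $S^{[1,n,n+1,n+2]}$ is handled by showing every component has dimension $2n+4$ (so that the irreducible locus of reduced flags is dense), exactly as in Proposition \ref{prop: [1,n,n+1,n+2] irred}. The only differences are cosmetic: you justify surjectivity of the forgetful maps by a direct composition-series argument rather than citing Bulois--Evain, and you get finiteness and flatness of $\phi_1$ from the fiber-product description $S^{[1,n,n+1,n+2]}\cong Z_{[n]}\times_{S^{[n]}}S^{[n,n+1,n+2]}$ rather than from the Cohen--Macaulayness of $Z_{[n,n+1,n+2]}$.
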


We prove that $S^{[n,n+1,n+2]}$ has klt singularities by constructing an explicit resolution of singularities. To state our result, we define
\[
	Z_{[n,n+1]} := \{(s,\xi_{n},\xi_{n+1}) \in S \times S^{[n,n+1]} : s \in \xi_{n+1}\}.
\]
It is not hard to show that $Z_{[n,n+1]}$ has two irreducible components (see Proposition \ref{prop:Zn,n+1,n+2IsReducible}). It follows from general principles that $S^{[n,n+1,n+2]}$ is the blowup of $S \times S^{[n,n+1]}$ at this reducible subscheme. We construct our resolution by blowing up each component separately.

\begin{thm}\label{thm:ResolutionIntro}
	Let $Z_{[n,n+1]} = W_1 \cup W_2$ be the irreducible components of $Z_{[n,n+1]}$ with $W_1 \cong S^{[1,n,n+1]}$ and $W_2 \cong S^{[n,n+1]}$. Let $X_1$ be the blowup of $S \times S^{[n,n+1]}$ at $W_1$, and $X_2$ be the blowup of $X_1$ at the proper transform $\overline{W}_2$ of $W_2$. Then $X_1,X_2$ are smooth, and $X_2$ is a resolution of singularities of $S^{[n,n+1,n+2]}$.
\end{thm}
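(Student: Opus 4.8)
The core strategy is to exploit that $S^{[n,n+1,n+2]}$ is the blowup of $M := S \times S^{[n,n+1]}$ along the ideal sheaf of $Z := Z_{[n,n+1]}$ (stated in the excerpt), and to resolve the blowup center one component at a time, using the classical fact that blowing up a smooth variety along a smooth center yields a smooth variety. So the outcome hinges on (i) understanding the component structure of $Z$, (ii) checking smoothness of each center when it is blown up, and (iii) identifying the end result with a resolution of $S^{[n,n+1,n+2]}$. Throughout I will freely use that $S$, $S^{[n]}$, $S^{[n+1]}$, and $S^{[n,n+1]}$ are all smooth (the last by Cheah/Tikhomirov), and that $M = S \times S^{[n,n+1]}$ is therefore smooth of dimension $2 + (2n+2) = 2n+4$... wait, $S^{[n,n+1]}$ has dimension $2n+2$, so $M$ has dimension $2n+4$; good, this matches the expected dimension of $S^{[n,n+1,n+2]}$, consistent with the blowup being birational.

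First I would pin down $Z = W_1 \cup W_2$ and describe the two components explicitly (this is Proposition \ref{prop:Zn,n+1,n+2IsReducible}, which I may assume). The locus $W_1$ where the residual point $s$ (distinguishing $\xi_n$ from $\xi_{n+1}$) coincides with the marked point should be identified with $S^{[1,n,n+1]}$: the data $(s,\xi_n,\xi_{n+1})$ with $s \in \xi_{n+1} \setminus \xi_n$ is exactly a point of $S$ together with a complete two-step nesting, i.e. a point of $S^{[1,n,n+1]}$, which is known to be smooth. The other component $W_2$ consists of triples where $s \in \xi_n$; since $\xi_n$ is then determined (it contains $s$) and the nesting $\xi_n \subseteq \xi_{n+1}$ is free, this is isomorphic to $S^{[n,n+1]}$ via the universal subscheme, again smooth. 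I would then compute $\overline{W}_2 = $ proper transform of $W_2$ in $X_1$ and argue it is smooth: away from $W_1 \cap W_2$ this is clear, and along the exceptional divisor one analyzes the blowup locally; the key point is that $W_1$ and $W_2$ meet ``nicely enough'' (ideally one shows $W_1 \cap W_2$ is smooth and the intersection is such that the proper transform stays smooth — this is where a local coordinate computation on $S^{[n,n+1]}$, using the description of its tangent space, enters).

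Next, with $X_1 = \mathrm{Bl}_{W_1} M$ smooth and $\overline W_2 \subseteq X_1$ smooth, the blowup $X_2 = \mathrm{Bl}_{\overline W_2} X_1$ is automatically smooth. It remains to produce the morphism $X_2 \to S^{[n,n+1,n+2]}$ and check it is a resolution, i.e. proper, birational, and an isomorphism over the smooth locus. Properness and birationality: both $X_2$ and $S^{[n,n+1,n+2]}$ are blowups/modifications of the same $M$, so I would show the ideal $\II_Z = \II_{W_1} \cdot \II_{\overline W_2}$ (after suitable pullback) so that blowing up $\II_{W_1}$ then $\II_{\overline W_2}$ computes $\mathrm{Proj}$ of the Rees algebra of $\II_Z$ — i.e. $X_2 \to M$ factors through $\mathrm{Bl}_Z M = S^{[n,n+1,n+2]}$, with $X_2 \to S^{[n,n+1,n+2]}$ proper and an isomorphism over the open set where $Z$ is already Cartier (which contains the smooth locus of $S^{[n,n+1,n+2]}$, since by Proposition \ref{prop:TwoStepIsLCI} it is lci and its singular locus is contained in the exceptional fibers over $W_1 \cap W_2$). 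That $X_2$ is then a genuine resolution follows since $X_2$ is smooth and $X_2 \to S^{[n,n+1,n+2]}$ is proper birational.

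The main obstacle I anticipate is \textbf{step (ii)}: verifying that the proper transform $\overline W_2$ is smooth inside $X_1$. This requires genuinely understanding how $W_1$ and $W_2$ sit inside $M = S \times S^{[n,n+1]}$ near their intersection $W_1 \cap W_2$ (where the residual point $s$ lies in $\xi_n$ and also — as a limit — is forced to the marked point), and doing a local-coordinate blowup computation using an explicit chart for $S^{[n,n+1]}$ around such configurations (e.g. via curvilinear/monomial ideals or the description of the Hilbert scheme near a nesting). A secondary subtlety is the bookkeeping to see that $\II_{W_1}\cdot\II_{\overline W_2}$ pulls back to $\II_Z$ so that the two successive blowups really do dominate $\mathrm{Bl}_Z M$; this is a standard Rees-algebra manipulation but needs care because $W_1$ and $W_2$ are not disjoint. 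Once these local pictures are in hand, the global conclusion — $X_2$ smooth, mapping properly and birationally onto $S^{[n,n+1,n+2]}$, isomorphism over the smooth locus — is formal.
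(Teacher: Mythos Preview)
Your argument has a genuine gap at the heart of step (ii), but the problem is already in step (i): the center $W_1 \cong S^{[1,n,n+1]}$ is \emph{not} smooth. (In fact the paper later shows $S^{[1,n,n+1]}$ has rational singularities but is not even $\QQ$-Gorenstein; already $S^{[1,n]}$, the universal family over $S^{[n]}$, is singular for $n \geq 2$.) So the classical fact ``blowup of a smooth variety along a smooth center is smooth'' does not apply to $X_1 = \mathrm{Bl}_{W_1}(S \times S^{[n,n+1]})$, and your whole strategy for the smoothness of $X_1$ collapses. You also have the two components swapped in your set-theoretic description: the locus where the marked point equals $\res(\xi_n,\xi_{n+1})$ is the one isomorphic to $S^{[n,n+1]}$ (the residual point is determined by the nesting), while the locus where the marked point lies in $\xi_n$ is $S^{[1,n,n+1]}$.

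What the paper actually does for the smoothness of $X_1$ is quite different and is the real content of the theorem. Since $W_1$ is only Cohen--Macaulay of codimension two (it is finite flat over the smooth $S^{[n,n+1]}$), one has a two-term locally free resolution of $\II_{W_1}$, and Hirschowitz's rank Kodaira--Spencer (RKS) criterion says that $\PP(\II_{W_1}) = X_1$ is smooth at a point provided a certain map $T_x(S\times S^{[n,n+1]}) \to \Hom(\ker u^\vee|_x,\coker u^\vee|_x)$ is surjective. The paper reinterprets this, following Tikhomirov's proof that $S^{[n,n+1]}$ is smooth, as a surjectivity statement about compatible pairs of extensions of $\II_{\xi_n}$ and $\II_{\xi_{n+1}}$ on $S$, and then verifies that surjectivity by an explicit diagram chase. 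Once $X_1$ is known to be smooth, your idea for $X_2$ is essentially right and is what the paper does: one checks $W_1 \cap W_2$ is a divisor in $W_2$, so $\overline{W_2} \cong W_2 \cong S^{[n,n+1]}$ is smooth, and then $X_2$ is the blowup of a smooth variety along a smooth center.
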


Our construction of the resolution in Theorem \ref{thm:ResolutionIntro} is heavily influenced by Tikhomirov's proof that $S^{[n,n+1]}$ is smooth \cite{Tik}. As such, we provide an exposition of his proof in Section \ref{sec: smoothness of one step}. We included this section in part because the published version \cite{Tik} has not been cited in the literature. Although the smoothness of $S^{[n,n+1]}$ is typically attributed to Tikhomirov, the result is cited as a preprint under a different title in every paper known to the authors. There is another proof of this result due to Cheah, where he reduces to the case $S = \PP^2$ and proceeds by checking smoothness at the fixed points of the $\CC^*$-action \cite{CheahThesis}.

This paper builds on a great deal of work concerning nested Hilbert schemes of points on surfaces. It is classical that the Hilbert scheme of points $S^{[n]}$ is a smooth projective variety of dimension $2n$ \cite{Fogarty}. These Hilbert schemes exhibit rich behavior and often arise in connections with other fields \cite{ABCH, Beauville, Haiman}. The first non-trivial nested Hilbert scheme is $S^{[n,n+1]}$, which can be used to study $S^{[n]}$ inductively \cite{FGAexp}. In fact, $S^{[n,n+1]}$ is also smooth and irreducible \cite{CheahThesis}. Ellingsrud-Str{\o}mme \cite{ES98} used $S^{[n,n+1]}$ in an intersection theoretic calculation on the Hilbert scheme which completed Nakajima's computation of the representation of Heisenberg algebra using the cohomology of the Hilbert scheme \cite{Nakajima}, and Ryan-Yang applied $S^{[n,n+1]}$ to syzygies of algebraic surfaces \cite{RY}. Another important nested Hilbert scheme is $S^{[1,n]}$ which is the universal family over $S^{[n]}$. The universal family $S^{[1,n]}$ for $S$ a K3 surface played a crucial role Voisin's papers on Green's conjecture \cite{Voisin1, Voisin2}. The universal family $S^{[1,n]}$ is irreducible and has at worst rational singularities \cite{Song16}.

Our knowledge of more general nested Hilbert schemes is sparse. For example, we do not have a characterization of the tuples $(n_1,\dots,n_k)$ for which $S^{[n_1,\dots,n_k]}$ is irreducible. It has long been known that $S^{[n]}$, $S^{[1,n]}$, and $S^{[n,n+1]}$ are irreducible \cite{Fogarty, ES98}.
More recently, results of Goddard and Goodwin on spaces of commuting varieties of parabolic subalgebras imply that $S^{[n,n+k]}$ is irreducible for $k\leq 6$ and that $S^{[n_1,\dots,n_k]}$ is irreducible if $n_k \leq 16$ \cite{GG18}. 
See work of Bulois and Evain \cite{BuloisEvain} for a more through exposition of the general connection between nested Hilbert schemes and parabolic subalgebras. 
Similar methods were used in Boos and Bulois \cite{BB} to show that the punctual nested Hilbert schemes had larger than expected dimensional components in the case $\mathbb{A}_0^{2[n,n+k]}$ where $k>5$.
The methods employed in these recent results is Lie theoretic, while our approach is more geometric. There is undoubtedly much more to be discovered through the interaction of these approaches.

Given that general Hilbert schemes can be quite pathological \cite{Vakil}, it is expected that the nice behavior exhibited in Theorem \ref{thm:TwoStepIntro} fails as the number of steps in the nesting grows. We confirm this expectation by providing an explicit example of a reducible nested Hilbert scheme. This construction allows us to give a sufficient condition on the numbers $n_1,\dots,n_k$ for the reducibility of $S^{[n_1,\dots,n_k]}$.

\begin{thm}\label{thm:ReducibleExample}
    The nested Hilbert scheme $S^{[n_1,\dots,n_k]}$ is reducible for $k > 22$.
\end{thm}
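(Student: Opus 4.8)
The plan is to exhibit, inside $S^{[n_1,\dots,n_k]}$, an irreducible family of nestings whose dimension strictly exceeds $2n_k$, while separately showing that the ``smoothable'' locus is an honest irreducible component of dimension exactly $2n_k$. The existence of a strictly larger family then forces reducibility, and the construction will be uniform in the tuple, so it handles \emph{every} $(n_1,\dots,n_k)$ with $k>22$ at once.

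First I would pin down the principal component. Let $R\subseteq S^{[n_1,\dots,n_k]}$ be the closure of the open locus where the largest member $\xi_{n_k}$ is reduced, i.e.\ consists of $n_k$ distinct points. Over this locus the whole flag is determined by finite combinatorial data (which subset of the points lies in each $\xi_{n_i}$), so it fibers over the configuration space of $n_k$ points of $S$ with finite fibers, connected by a monodromy argument; hence the locus is irreducible of dimension $2n_k$. Since a nonempty open irreducible subset of any scheme has closure equal to an irreducible component, $R$ is a genuine component of dimension exactly $2n_k$. It therefore suffices to produce a closed subset of dimension $>2n_k$: then $R$ cannot be all of $S^{[n_1,\dots,n_k]}$, and the scheme is reducible.

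Next I would build the large family by concentrating a ``fat'' flag at a single point and spreading the remaining length over moving reduced points. Fix $p\in S$ with maximal ideal $\mathfrak m=\mathfrak m_p$. For each $d$ the layer $\mathfrak m^{d}/\mathfrak m^{d+1}\cong\CC^{d+1}$ carries its complete flag variety, of dimension $\binom{d+1}{2}$, and each such flag corresponds to a chain of $\mathfrak m$-primary ideals interpolating between $\mathfrak m^{d}$ and $\mathfrak m^{d+1}$ by steps of colength one. Concatenating these chains for $d=1,\dots,5$ and appending a two-step partial flag in $\mathfrak m^{6}/\mathfrak m^{7}\cong\CC^{7}$ yields, for fixed $p$, an irreducible family of punctual flags realizing all lengths $1,2,\dots,23$, of dimension
\[
\binom{2}{2}+\binom{3}{2}+\binom{4}{2}+\binom{5}{2}+\binom{6}{2}+\big(6+5\big)=1+3+6+10+15+11=46 .
\]
For an arbitrary tuple with $k\ge 23$ I would assign to $\xi_{n_i}$ the tower scheme of length $\min(i,23)$ and fill the remaining length $n_i-\min(i,23)$ with distinct moving points disjoint from $p$; monotonicity of the $n_i$ guarantees this is a legitimate nesting. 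Letting $p$, the tower, and the $n_k-23$ reduced points vary gives an irreducible family of dimension $2+46+2(n_k-23)=2n_k+2$.

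By the first paragraph this finishes the argument: $S^{[n_1,\dots,n_k]}$ contains a subset of dimension $2n_k+2>2n_k=\dim R$, yet $R$ is an irreducible component, so there must be a second component and the scheme is reducible. The threshold $k>22$ is exactly the onset of this inequality: the layer-by-layer flag reaching length $\ell=\binom{D+1}{2}$ has dimension $\binom{D+1}{3}$, plus the contribution of a partial top layer in general, and this first exceeds the break-even value $2\ell-2$ at $\ell=23$ (at $\ell=22$ one obtains only $41\le 42$), which is why $23$ nested steps—and not fewer—are required, the case $k=23$ being the first explicit reducible example. The main obstacle is the dimension bookkeeping for the tower: one must verify that flags in distinct layers contribute independently (the chain passes through the rigid ideals $\mathfrak m^{d}$ at the junctions, so the layer-wise choices are genuinely free and additive) and that the resulting family is not absorbed into $R$, which is settled purely by the dimension comparison. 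Checking that this particular tower is optimal—so that the threshold is sharp—is the delicate combinatorial point, but for the stated sufficient condition only the lower bound $2n_k+2$ is needed.
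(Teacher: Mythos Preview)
Your proposal is correct and follows essentially the same route as the paper. The paper constructs the $46$-dimensional punctual family at a point by writing out explicit ideals $I_1,\dots,I_{23}$ with $46$ free parameters $a_1,\dots,a_{46}$, each $I_j$ lying between consecutive powers of $\mathfrak m$; your flag-variety description in the graded pieces $\mathfrak m^d/\mathfrak m^{d+1}$ is exactly the conceptual version of that same family, and the extension to arbitrary tuples by padding with moving reduced points matches the paper's corollary verbatim.
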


While this theorem does not completely settle the question of irreducibility for nested Hilbert schemes, it does significantly narrow the search for the range of tuples $(n_1,\dots,n_k)$ for which the nested Hilbert scheme $S^{[n_1,\dots,n_k]}$ is irreducible.

The paper is organized as follows. Section \ref{sec: background} provides the necessary background on nested Hilbert schemes of points. In Section \ref{sec: irreducibility}, we prove our results on (ir)reducibility of nested Hilbert schemes.
Section \ref{sec: smoothness of one step} presents a simplified version of Timhomirov's proof of the smoothness of $S^{[n,n+1]}$, as the strategy motivates our proof of the resolution of $S^{[n,n+1,n+2]}$. 
Section \ref{sec:ResolutionAndSings} contains all of our results on singularities. 
Finally, in Section \ref{sec: geo results}, we compute the Picard group and the class of the canonical divisor of $S^{[n,n+1,n+2]}$.

\subsection*{Acknowledgements} 
The authors would like to thank Kevin Tucker and Izzet Coskun for valuable discussions during the preparation of this article. The second author was supported by NSF grant 1246844.

\section{Background} \label{sec: background}
In this section, we review the necessary background on nested Hilbert schemes and singularities.
For an introductions to nested Hilbert schemes, see \cite{CheahThesis}, and for the relevant singularities, see \cite{KollarBook}.
Throughout this section, let $S$ be a smooth irreducible projective surface over $\mathbb{C}$ with $h^1(S,\mathcal{O}_S) = 0$.

\subsection{Nested Hilbert Schemes of Points} The parameter space of unordered $n$-tuples of points on $S$ is the symmetric product $S^{(n)}$, which is the cartesian product $S^n$ mod the action of the symmetric group $\mathfrak{S}_n$.
Unfortunately, the symmetric product is singular along the locus of double (and more) points.
The Hilbert scheme of points, $S^{[n]}$ which parameterizes subschemes of $S$ with constant Hilbert polynomial $n$, is a resolution of singularities of the symmetric product via the Hilbert-Chow morphism \[\mathrm{hc}:S^{[n]} \to S^{(n)} ,\]
which maps a subscheme to its support with multiplicities.

The nested Hilbert scheme $S^{[n_1,\dots,n_k]}$ parametrizes $k$-tuples of subschemes with respective constant Hilbert polynomials $n_1$ through $n_k$ which are nested inside of each other, i.e. 
\[S^{[n_1,\dots,n_k]} = \{(\xi_{n_1},\dots,\xi_{n_k}) \subset S^{[n_1]} \times \dots \times S^{[n_k]}: \xi_{n_1} \subset \dots \subset \xi_{n_k}\}.\]
In other words, the nested Hilbert scheme is the incidence correspondence inside of the product $S^{[n_1]}\times \dots \times S^{[n_k]}$, which defines the scheme structure on it.
As a subscheme of the product, $S^{[n_1,\dots,n_k]}$ comes equipped with the projections 
\[\pi_{n_i} = \mathrm{pr}_{n_i}\vert_{S^{[n_1,\dots,n_k]}}: S^{[n_1,\dots,n_k]} \to S^{[n_i]} \text{ and}\]
\[\phi_{n_i}: S^{[n_1,\dots,n_k]} \to S^{[n_1,\dots,\widehat{n_i},\dots n_k]}.\]
That is $\pi_{n_i}(\xi_{n_1},\dots,\xi_{n_k}) = \xi_{n_i}$, and $\phi_{n_i}(\xi_{n_1},\dots,\xi_{n_k}) = (\xi_{n_1},\dots,\widehat{\xi_{n_i}},\dots,\xi_{n_k})$.
They also come with residual maps
\[\mathrm{res}_{[n_i,n_j]}:S^{[n_1,\dots,n_k]} \to S^{(n_j-n_i)}\]
which send a nest of subschemes to the support (with multiplicity) of the difference of two subschemes in the nesting.
This morphism is particularly useful in the case that $n_j-n_i=1$, in which case \[S^{(n_j-n_i)} = S^{(1)} = S^{[1]} = S.\]
In the case when the nested Hilbert scheme has only two indices, we drop the subscript on the map. 
Note, the notation for each of these maps is slightly abusive as they do specify which nested Hilbert scheme is the domain. However, the domain will be clear when we use them so no confusion arises.

In order to study the nested Hilbert schemes, it is often useful to consider the universal families over them.
These are the (possibly reducible) incidence correspondence \[Z_{[n_1,\dots,n_k]} = \{(p,(\xi_{n_1},\dots,\xi_{n_k})): p \in \mathrm{supp}(\xi_{n_k}) \}.\]

\begin{prop}
    Suppose $\{m_1,\dots,m_s\} \subseteq \{n_1,\dots, n_t\} \subseteq \mathbb{N}$, then the natural map $S^{[n_1,\dots,n_t]} \to S^{[m_1,\dots,m_s]}$ is surjective.
\end{prop}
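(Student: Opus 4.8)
The plan is to verify surjectivity on $\CC$-points, which suffices since every scheme in sight is of finite type over $\CC$, and a $\CC$-point of a nested Hilbert scheme is by definition an honest nesting of subschemes of the prescribed lengths. Realizing the inclusion $\{m_1,\dots,m_s\}\subseteq\{n_1,\dots,n_t\}$ as a chain obtained by adjoining one index at a time, the map in question factors as a composition of forgetful maps of the form $\phi_{n_j}:S^{[n_1,\dots,n_t]}\to S^{[n_1,\dots,\widehat{n_j},\dots,n_t]}$, each dropping a single index. Since a composition of surjections is a surjection, it is enough to treat the case $s=t-1$, i.e. the map $\phi:S^{[n_1,\dots,n_t]}\to S^{[n_1,\dots,\widehat{n_i},\dots,n_t]}$ for a single $i$.

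So I would fix a nesting $\xi_{n_1}\subseteq\cdots\subseteq\widehat{\xi_{n_i}}\subseteq\cdots\subseteq\xi_{n_t}$ in the target and produce a length-$n_i$ subscheme $\xi_{n_i}$ with $\xi_{n_{i-1}}\subseteq\xi_{n_i}\subseteq\xi_{n_{i+1}}$ (using the convention $\xi_{n_0}:=\emptyset$ when $i=1$), which fits it back into the chain. If $i<t$, then on ideal sheaves $I_{\xi_{n_{i+1}}}\subseteq I_{\xi_{n_{i-1}}}$ (with $I_\emptyset=\OO_S$), and $M:=I_{\xi_{n_{i-1}}}/I_{\xi_{n_{i+1}}}$ is an $\OO_S$-module of finite length $n_{i+1}-n_{i-1}$. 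By Jordan--Hölder, $M$ has a composition series and hence a submodule of every length between $0$ and $n_{i+1}-n_{i-1}$; choose one of length $n_{i+1}-n_i$. Its preimage $I$ in $I_{\xi_{n_{i-1}}}$ is an $\OO_S$-submodule of $\OO_S$, i.e. an ideal sheaf, and $V(I)=\xi_{n_i}$ has length $n_i$ and sits between $\xi_{n_{i-1}}$ and $\xi_{n_{i+1}}$, as needed.

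The one case where submodules do not directly apply is $i=t$, i.e. adjoining a new largest index: there I only have a lower bound $\xi_{n_{t-1}}$ and must \emph{enlarge} it to length $n_t$. Here I would iterate a one-step enlargement: for a nonzero ideal sheaf $I_\xi\subseteq\OO_S$ cutting out a length-$\ell$ subscheme, the finitely generated $\OO_S$-module $I_\xi$ has a maximal proper submodule $J$ (e.g. because $I_\xi\otimes\kappa(p)\neq 0$ for some $p$ in the support, together with Nakayama), and $V(J)$ is a length-$(\ell+1)$ subscheme containing $\xi$; repeating $n_t-n_{t-1}$ times yields $\xi_{n_t}\supseteq\xi_{n_{t-1}}$. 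Assembling the constructed subscheme(s) into the given nesting produces a $\CC$-point of $S^{[n_1,\dots,n_t]}$ mapping to the chosen point, which proves surjectivity.

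I do not anticipate a real obstacle: the argument is elementary commutative algebra. The only points requiring care are keeping the inclusion of ideal sheaves oriented oppositely to the inclusion of subschemes, and justifying the two reductions (to $\CC$-points and to a single adjoined index); the mildly non-automatic step is the $i=t$ case, where one must enlarge rather than interpolate and so argues via maximal proper submodules instead of composition series.
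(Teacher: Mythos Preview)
Your proof is correct. The paper takes a slightly different, less explicit route: it reduces everything to the single claim that any pair $(\xi_k,\xi_{k+\ell})\in S^{[k,k+\ell]}$ can be refined to a complete chain $\xi_k\subseteq\xi_{k+1}\subseteq\cdots\subseteq\xi_{k+\ell}$, and then simply cites \cite[Proposition~3.14]{BuloisEvain} for that fact. You instead factor the forgetful map into single-index drops and supply the interpolation yourself via Jordan--H\"older on the finite-length module $I_{\xi_{n_{i-1}}}/I_{\xi_{n_{i+1}}}$, treating the top index separately by iterated one-step enlargement. The underlying mechanism is the same once the cited proposition is unpacked (it is proved by exactly this composition-series trick), but your version has the virtue of being entirely self-contained and making the elementary nature of the statement transparent. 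One cosmetic simplification: in the $i=t$ case you could alternatively just adjoin $n_t-n_{t-1}$ reduced points of $S$ away from the support of $\xi_{n_{t-1}}$, which avoids the maximal-submodule discussion altogether.
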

\begin{proof}
    It suffices to show that for any $(\xi_k,\xi_{k+\ell}) \in S^{[k,k+\ell]}$, there is a chain $\xi_k \subseteq \xi_{k+1} \subseteq \cdots \subseteq \xi_{k+\ell}$ where $\xi_{i} \in S^{[i]}$. This is precisely the statement of \cite[Proposition 3.14]{BuloisEvain}.
\end{proof}

\subsection{The Nested Hilbert Scheme $S^{[n,n+1,n+2]}$}

We rely heavily on the description of the map $(\phi_{n+2}, \res_{[n+1,n+2]}) : S^{[n,n+1,n+2]} \to S^{[n,n+1]} \times S$ as the projectivization of an ideal sheaf. To see this, we first describe the map $(\phi_{n+1}, \res) : S^{[n,n+1]} \to S^{[n]} \times S$ for which the notation is less overbearing. 

Note that elements in $S^{[n,n+1]}$ correspond to elementary transformations \cite{ES98}. Given a pair of subschemes $(\xi_n,\xi_{n+1}) \in S^{[n,n+1]}$, we have an exact sequence
\[
    0 \to \II_{\xi_{n+1}} \to \II_{\xi_n} \to \CC(p) \to 0
\]
where $p = \res(\xi_n,\xi_{n+1})$. Such sequences are called elementary transformations \cite{EL99}. Conversely for any $p \in S$, the kernel of a surjection $\II_{\xi_n} \to \CC(p)$ is an ideal $\II_{\xi_{n+1}}$ of colength $n+1$ in $\OO_S$ with $\res(\xi_n,\xi_{n+1}) = p$. Thus, the space of pairs $(\xi_n,\xi_{n+1}) \in S^{[n,n+1]}$ with $\res(\xi_n,\xi_{n+1}) = p$ is isomorphic to $\PP(\II_{\xi_n} \otimes_{\OO_S} \CC(p))$. 

This construction globalizes to show that $S^{[n,n+1]} = \PP(\II_{Z_{[n]}})$ \cite{ES98}. We give a sketch of why this is true. Recall from \cite{CheahThesis} that $S^{[n,n+1]}$ represents the functor of nested flat families, i.e.
\[
    \Hom(X,S^{[n,n+1]}) =   \left\{ \begin{array}{c} \text{Flat families } \Xi_n \subseteq \Xi_{n+1} \subseteq X \times S \\ \text{such that the fibers of } \Xi_n,\Xi_{n+1} \\
    \text{over } X \text{ have length } n,n+1 \text{ respectively}\end{array} \right\}.
\]
Let us see why a map $f : X \to \PP(\II_{Z_n})$ gives such a nesting. By the universal property of the Hilbert scheme, the composition of $f$ with with projection map gives us the following diagram
\begin{center}
    \begin{tikzcd} 
        \Xi_n \arrow[dr, "\text{flat}"'] \arrow[r, hook] & X \times S \arrow[d, "\text{pr}"'] \\
        & X \arrow[u, bend right, "\sigma"']
    \end{tikzcd}
\end{center}
Then $f$ gives us a surjection from $\II_{\Xi_n} \to \OO_{\sigma(X)} \to 0$ (by pulling back a similar surjection from $X \times S^{[n]} \times S$ involving the ideal sheaf $\II_{Z_{[n]}}$). The kernel of this map of sheaves is an ideal sheaf $\II_{\Xi_{n+1}}$ which cuts out the desired flat family of length $n+1$ subschemes of $S$.

Using essentially identical formalism, one can show that the map $S^{[n,n+1,n+2]} \to S^{[n,n+1]} \times S$ is the projectivization of the ideal sheaf of $\II_{Z_{[n,n+1]}}$. Let us review an irreducibility criteria for such projectivizations. A good reference for this material is \cite[Section 3]{ES98}. Let $X$ be an irreducible variety with a subscheme $W \subseteq X$ of codimension 2. Further, assume that the ideal sheaf $\II_W$ of $W$ in $X$ admits a resolution
\[
    0 \to \mathcal{A} \to \mathcal{B} \to \II_W \to 0
\]
where $\mathcal{A}$ and $\mathcal{B}$ are locally free. For example, such a resolution exists if $X$ is smooth and $W$ is Cohen-Macaulay. Note that this resolution of length two implies that $\II_W \cdot \PP(\II_W)$ is locally principal, so there is a natural map $\PP(\II_W) \to \mathrm{Bl}_WX$. We define loci
\[
    W_i := \{w\in W : \II_W \text{ is minimally generated by } i \text{ elements at }w \}.
\]
Consider the projection map $\PP(\II_W) \to X$. If $w \in W_i$, then the fiber of the projection map over $w$ is isomorphic to $\PP^{i-1}$.

\begin{prop}[{\cite[Proposition 3.2]{ES98}}]\label{prop:generalIrreducibilityCriterion}
    Let $W,X$ be as above. 
    \begin{enumerate}
        \item If $W_i$ has codimension at least $i \geq 2$ in $X$ for each $i$, then $\PP(\II_W)$ is irreducible and $\PP(\II_W) \cong \mathrm{Bl}_WZ$.
        \item Assume $W$ is irreducible. If $W_i$ has codimension at least $i+1$ in $X$ for each $i\geq 3$, then the exceptional divisor in $\PP(\II_W) = \mathrm{Bl}_WX$ is irreducible.
    \end{enumerate}
\end{prop}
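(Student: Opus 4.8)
The plan is to realize $\PP(\II_W)$ as the zero scheme of a section of a vector bundle on a projective bundle over $X$, and then to play a dimension count over the strata $W_i$ against the Cohen--Macaulay property that the length-$2$ resolution supplies. Write the resolution as $0 \to \mathcal{A} \xrightarrow{\psi} \mathcal{B} \to \II_W \to 0$; since $\II_W$ has generic rank $1$ we have $\rk\mathcal{A} = \rk\mathcal{B} - 1 =: r-1$. Applying $\mathrm{Sym}^\bullet$ to $\mathcal{B} \twoheadrightarrow \II_W$ realizes $\PP(\II_W)$ as a closed subscheme of the $\PP^{r-1}$-bundle $p\colon \PP(\mathcal{B}) \to X$, cut out there (locally by $r-1$ equations) as the zero scheme of the section of $p^{*}(\mathcal{A}^{\vee})\otimes\OO_{\PP(\mathcal{B})}(1)$ obtained by composing $\psi$ with the tautological quotient $p^{*}\mathcal{B} \twoheadrightarrow \OO_{\PP(\mathcal{B})}(1)$. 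Since $r-1 = \dim\PP(\mathcal{B}) - \dim X$, Krull's height theorem gives that every irreducible component of $\PP(\II_W)$ has dimension at least $\dim X$. On the other hand, over $X\setminus W$ the sheaf $\II_W$ is invertible, so $\PP(\II_W)\to X$ restricts to an isomorphism onto $X\setminus W$; and over a point of $W_i$ the fibre of $\PP(\II_W)\to X$ is $\PP^{i-1}$, so the preimage of $W_i$ has dimension at most $\dim W_i + (i-1) = \dim X - \codim_X W_i + i - 1$.

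For part (1), the hypothesis $\codim_X W_i \geq i$ bounds the preimage of each $W_i$ with $i\geq 2$ by $\dim X - 1$, while $W_1 = \emptyset$ because a codimension-$2$ subscheme of a variety is nowhere locally principal. Thus no irreducible component of $\PP(\II_W)$ lies over $W$, so every component meets the open subset $\PP(\II_W)|_{X\setminus W}\cong X\setminus W$; since the latter is irreducible, $\PP(\II_W)$ is irreducible of dimension $\dim X$. Being an irreducible subscheme of codimension $r-1$ cut out locally by $r-1$ equations in the smooth variety $\PP(\mathcal{B})$, it is a local complete intersection, hence Cohen--Macaulay; being generically reduced (it is $X\setminus W$ on a dense open), it is integral. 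Finally, the natural surjection $\mathrm{Sym}^\bullet\II_W \twoheadrightarrow \bigoplus_{d\geq 0}\II_W^{d}$ onto the Rees algebra yields a closed immersion $\mathrm{Bl}_W X \hookrightarrow \PP(\II_W)$; since $\dim\mathrm{Bl}_W X = \dim X = \dim\PP(\II_W)$ and both are integral, this closed immersion is an isomorphism, so $\PP(\II_W)\cong\mathrm{Bl}_W X$.

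For part (2), the exceptional divisor $E$ is a Cartier divisor in the integral scheme $\PP(\II_W)$, hence is of pure dimension $\dim X - 1$, and set-theoretically it is the preimage of $W$ under $\PP(\II_W)\to X$, so its fibre over a point of $W_i$ is again $\PP^{i-1}$. Now the hypothesis $\codim_X W_i \geq i+1$ for $i\geq 3$ bounds the part of $E$ lying over $\bigcup_{i\geq 3}W_i$ by $\dim X - 2 < \dim E$, and also forces each $W_i$ with $i\geq 3$ to be a proper closed subset of the irreducible $W$, so $W_2$ is open and dense in $W$. Over $W_2$ the ideal $\II_W$ is locally generated by a regular sequence of length $2$, so $W$ is a local complete intersection there and $E|_{W_2} = \PP\bigl((\II_W/\II_W^{2})|_{W_2}\bigr)$ is a $\PP^{1}$-bundle over the irreducible $W_2$, hence irreducible. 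Since $E$ has pure dimension $\dim X - 1$ and the part of $E$ over $\bigcup_{i\geq 3}W_i$ is too small to contain a component, every component of $E$ meets the irreducible open dense subset $E|_{W_2}$, and therefore $E$ is irreducible.

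The real obstacle is not this bookkeeping but the structural input it rests on: one needs to know that $\PP(\II_W)$ carries no component of larger-than-expected dimension and, for the identification with the blowup and for the pure-dimensionality of $E$, that it is reduced and Cohen--Macaulay. This is precisely what the length-$2$ locally free resolution buys, by presenting $\PP(\II_W)$ inside $\PP(\mathcal{B})$ with the expected number of defining equations. Granting that, tracking the fibre dimensions $\PP^{i-1}$ over the strata $W_i$, checking that $W_1=\emptyset$ and that $W_2$ is dense, and recognizing $E|_{W_2}$ as an honest projective bundle are routine.
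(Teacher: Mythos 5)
The paper does not actually prove this proposition --- it is quoted from \cite[Proposition 3.2]{ES98} and used as a black box --- so your proof is being measured against the cited source rather than against an argument in the text. Your reconstruction is correct and is essentially the standard one: present $\PP(\II_W)$ inside $\PP(\mathcal{B})$ as the zero scheme of a section of $p^*(\mathcal{A}^\vee)\otimes\OO_{\PP(\mathcal{B})}(1)$ (the same device the paper itself uses later in the proof of Proposition \ref{prop:TwoStepIsLCI}), get the lower bound $\dim X$ on every component from Krull, beat it with the fibre-dimension count $\dim W_i + (i-1)$ over the strata, and conclude irreducibility from the isomorphism over $X\setminus W$; for the blowup identification, the Rees-algebra surjection plus integrality of both sides does the job, and for (2) the purity of the Cartier divisor $E$ together with the $\PP^1$-bundle structure over the dense stratum $W_2$ is exactly the right mechanism. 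One caveat worth flagging: in two places you silently use that $X$ is Cohen--Macaulay (in fact you say ``smooth'') --- namely, to promote ``generically reduced lci in $\PP(\mathcal{B})$'' to ``integral,'' and to promote ``two generators of a height-two ideal over $W_2$'' to ``regular sequence.'' The hypotheses as stated in the paper only posit an irreducible variety $X$ with a length-two locally free resolution of $\II_W$; in every application here $X$ is smooth, so nothing is lost, but strictly speaking the first conclusion of (1) (topological irreducibility) is the only part that needs no such assumption, and you should state the CM hypothesis where you invoke it.
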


In the situation where $\PP(\II_W)$ is irreducible, this construction also appears in \cite[Section 3.1.2]{JiangLeung} where they give an orthogonal decomposition of the derived category of the blowup. Belmans-Krug applied this result to produce an orthogonal decomposition of the derived category of $S^{[n,n+1]}$ \cite{BelmansKrug}.

\subsection{Singularities}

Let $X$ be a variety with resolution of singularities $f : Y \to X$. We say that $X$ has \emph{rational singularities} if $f_*\OO_Y = \OO_X$ and $R^if_*\OO_X = 0$ for $i > 0$. Furthermore, $X$ is rational if:
\begin{enumerate}
    \item $X$ is Cohen-Macaulay and
    \item $f_*\omega_Y = \omega_X$, where $\omega_Y,\omega_X$ are the canonical modules on $Y,X$ respectively.
\end{enumerate}
This is proved in \cite[Lemma 1]{Kovacs}.

Now suppose $X$ be a normal variety with canonical divisor $K_X$. Furthermore, assume that $K_X$ is $\QQ$-Cartier (i.e. that $X$ is $\QQ$-Gorenstein\footnote{Note that a $\QQ$-Gorenstein variety need not be Cohen-Macaulay.}). Let $f : Y \to X$ be a log resolution. We can write $K_Y \equiv f^*K_X + \sum a(E;X)E$ where the sum is taken over prime divisors $E$ over $X$. Then $X$ is \emph{log terminal} if $a(E;X) > -1$ for each $E$. Log terminal singularities are necessarily rational \cite{Elkik, Kovacs} and thus Cohen-Macaulay. If $X$ is Gorenstein with rational singularities, then $X$ is necessarily log terminal.

\section{Irreducibility Results} \label{sec: irreducibility}
In this section, we  prove that the nested Hilbert schemes $S^{[n,n+1,n+2]}$ and $S^{[1,n,n+1,n+2]}$ are irreducible. As corollaries, we show that the nested Hilbert schemes $S^{[1,n,n+1]}$ and $S^{[1,n,n+2]}$ are irreducible as well as providing a new proof that the nested Hilbert scheme $S^{[n,n+2]}$ is irreducible.
Finally, we provide a limit on how far this type of result can be proven by giving the first example of a reducible nested Hilbert scheme of points on a surface. 

To prove that $S^{[n,n+1,n+2]}$ is irreducible, we want to apply Proposition 3.2 of \cite{ES98}. 
Since $S^{[n,n+1,n+2]}$ is the projectivization of the ideal sheaf of the universal family 
\[ Z_{[n,n+1]} := \{(p,\xi_n,\xi_{n+1}) \in S\times S^{[n,n+1]} : p \in \mathrm{Supp}(\xi_{n+1})\},\] 
we need to prove two things to satisfy the hypotheses of that result.
First, we need to show that the universal family has codimension two in $S \times S^{[n,n+1]}$ and $\mathcal{O}_{Z_{[n,n+1]}}$ has local projective dimension two over $\mathcal{O}_{S \times S^{[n,n+1]}}$.
Second, we need to bound the dimensions of the loci in $S \times S^{[n,n+1]}$ where the ideal sheaf of the universal family needs $i$ generators.

The first of these necessary conditions is relatively straightforward to show.

\begin{lemma}\label{lem: codim two}
$Z_{[n,n+1]}$ has codimension two in $S \times S^{[n,n+1]}$, and $\mathcal{O}_{Z_{[n,n+1]}}$ has local projective dimension two over $\mathcal{O}_{S\times S^{[n,n+1]}}$.
\end{lemma}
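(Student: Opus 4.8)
First, recall that $S^{[n,n+1]}$ is smooth of dimension $2n+2$ (by Tikhomirov's theorem, to be discussed in Section \ref{sec: smoothness of one step}), so $S \times S^{[n,n+1]}$ is smooth of dimension $2n+4$. The plan is to analyze $Z_{[n,n+1]}$ fiber-by-fiber over $S^{[n,n+1]}$ via the projection $\pi : S \times S^{[n,n+1]} \to S^{[n,n+1]}$. Over a point $(\xi_n,\xi_{n+1})$, the fiber of $Z_{[n,n+1]}$ is $\mathrm{Supp}(\xi_{n+1})$, which is a finite set and hence $0$-dimensional. It follows that $\dim Z_{[n,n+1]} = 2n+2$, so $Z_{[n,n+1]}$ has codimension $2$ in $S \times S^{[n,n+1]}$, as claimed. (One should also note $Z_{[n,n+1]}$ is nonempty and pure-dimensional; purity will follow once we know it is Cohen--Macaulay, which is the content of the second assertion.)

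For the projective dimension statement, the key point is that $Z_{[n,n+1]}$ is Cohen--Macaulay: a closed subscheme of codimension $2$ in a smooth variety is Cohen--Macaulay if and only if its structure sheaf has local projective dimension $2$ over the ambient structure sheaf, by the Auslander--Buchsbaum formula. So the task reduces to showing $Z_{[n,n+1]}$ is Cohen--Macaulay. I would do this locally: étale-locally near a point $(p,\xi_n,\xi_{n+1})$, the universal family $Z_{[n+1]} \subseteq S \times S^{[n+1]}$ is known to be Cohen--Macaulay (indeed flat over $S^{[n+1]}$, and $S^{[n+1]}$ is smooth — this is standard, cf. \cite{ES98}), and $Z_{[n,n+1]}$ is cut out inside $S \times S^{[n,n+1]}$ by pulling back along the smooth-base-change-type diagram; alternatively, one identifies $Z_{[n,n+1]}$ with the pullback of the universal subscheme under $\pi_{n+1} : S^{[n,n+1]} \to S^{[n+1]}$, i.e. $Z_{[n,n+1]} = (\mathrm{id}_S \times \pi_{n+1})^{-1}(Z_{[n+1]})$. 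Since $Z_{[n+1]}$ is flat of relative dimension $0$ over $S^{[n+1]}$ and $S^{[n,n+1]}$, $S^{[n+1]}$ are both smooth, the fibered product $Z_{[n,n+1]} = Z_{[n+1]} \times_{S^{[n+1]}} S^{[n,n+1]}$ is flat over $S^{[n,n+1]}$ with $0$-dimensional Cohen--Macaulay (indeed Artinian) fibers, hence is itself Cohen--Macaulay of pure dimension $2n+2$. This simultaneously reproves the codimension statement.

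The main obstacle is making the identification $Z_{[n,n+1]} \cong Z_{[n+1]} \times_{S^{[n+1]}} S^{[n,n+1]}$ precise at the level of scheme structures, i.e. checking that the incidence scheme structure on $Z_{[n,n+1]}$ coming from its definition inside $S \times S^{[n,n+1]}$ agrees with the one pulled back from the universal family $Z_{[n+1]}$. This is essentially a matter of comparing the two relevant universal properties, and follows from the fact that $\pi_{n+1} : S^{[n,n+1]} \to S^{[n+1]}$ is the map classifying the flat family $\Xi_{n+1}$ over $S^{[n,n+1]}$, so that the pullback of $Z_{[n+1]}$ is exactly $\Xi_{n+1}$, whose support-incidence description is precisely $Z_{[n,n+1]}$. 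Once this is in place, flat base change gives everything; if one prefers to avoid the base-change formalism, the alternative is a direct local computation of the ideal of $Z_{[n,n+1]}$ in local coordinates on $S^{[n,n+1]}$ (using that locally $\xi_{n+1}$ is defined by two equations, as $S$ is a surface), exhibiting an explicit length-two locally free resolution — but the base-change argument is cleaner.
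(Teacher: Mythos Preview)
Your proposal is correct and follows essentially the same approach as the paper: both argue that the projection $Z_{[n,n+1]} \to S^{[n,n+1]}$ is finite and flat onto a smooth base, deduce Cohen--Macaulayness, and then invoke Auslander--Buchsbaum to get projective dimension two. The paper simply asserts flatness of the projection, whereas you supply the justification by identifying $Z_{[n,n+1]}$ with the pullback of the universal family $Z_{[n+1]}$ along $\pi_{n+1}$; this extra care about the scheme structure is a welcome addition but does not change the underlying argument.
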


\begin{proof}
Since the projection map $Z_{[n,n+1]} \to S^{[n,n+1]}$ is finite, the dimension of $Z_{[n,n+1]}$ is $2n+2$, i.e. it is codimension two in $S \times S^{[n,n+1]}$.

The projection map is also flat.
Since $Z_{[n,n+1]}$ admits a finite, flat map to a smooth variety, $Z_{[n,n+1]}$ is Cohen-Macaulay \cite[Corollary to Theorem 23.3]{Matsumura}.
Since $Z_{[n,n+1]}$ is a codimension two Cohen-Macaulay subscheme of the smooth variety $S^{[n,n+1]} \times S$, the local ring $\OO_{Z_{[n,n+1]}, \zeta}$ is a Cohen-Macaulay module over the regular ring $R = \OO_{S \times S^{[n,n+1]},\zeta}$ for any $\zeta = (p,\xi_n,\xi_{n+1}) \in Z_{[n,n+1]}$. 
As a result, $\OO_{Z_{[n,n+1]}, \zeta}$ has a minimal free resolution
    \begin{center}
        \begin{tikzcd}
            0 \arrow[r] & \bigoplus^{b_2} \arrow[r, "M"] R & \bigoplus^{b_1} R \arrow[r] & R \arrow[r] & \OO_{Z_{[n,n+1]}, \zeta} \arrow[r] & 0
        \end{tikzcd}
    \end{center}
whose length is two by the Auslander-Buchsbaum formula, i.e. $\mathcal{O}_{Z_{[n,n+1]}}$ has local projective dimension two over $\mathcal{O}_{S \times S^{[n,n+1]}}$.
\end{proof}

The second condition is a bit more complicated to show. 
Let us more explicitly state what we need to show.
Denote the locus where the ideal of $Z_{[n,n+1]}$ needs $i$ generators by \[W_{i,[n,n+1]} = \{(p,Z',Z) \in S\times S^{[n,n+1]}: I_{Z_{[n,n+1]}} \text{ needs }i\text{ generators at } (p,Z',Z)\}.\]
We note that this is precisely the number of generators of the ideal of $Z$ at the point $p$.
We need to show that $\mathrm{codim}(W_{i,[n,n+1]}) \geq i$ for all $i\geq 2$.

\subsection*{Dimension bounds for loci in the universal families}
Since the dimension bounds we prove in this section are local, we restrict to the case of $S = \mathbb{A}^2$.
Using the forgetful map $(\mathrm{id},\pi_n):S\times S^{[n,n+1]} \to S \times S^{[n]}$, we want to bound the dimensions of $W_{i,[n]}$ starting from dimensions of the analogous loci in $S \times S^{[n]}$, i.e. \[W_{i,[n]} = \{(p,Z) \in S\times S^{[n]}: I_{Z_{[n]}} \text{ needs exactly }i\text{ generators at } (p,Z)\}.\]
In order to do that, we have to bound both the dimension of $W_{i,[n,n+1]}$ and the dimensions of the fibers over it.
To get a sharp enough bound for the dimension of $W_{i,[n,n+1]}$, we need to bound the dimensions of the $W_{i,[n]}$ more sharply than in \cite{ES98}.
We first show when these are empty. $W_{1,[n]}$ is always nonempty and of dimension $2n+2$ as it is the complement of the universal family.
For higher $i$, we get the following lemma.
\begin{lemma}
\label{lem: 1 step emptiness}
$W_{i,[n]} = \varnothing$ for $i\geq 2$ if  $n< \binom{i}{2}$.  
\end{lemma}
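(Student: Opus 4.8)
The plan is to reduce to a purely local statement about monomial-style ideals at a single point, then count parameters. We want to show $W_{i,[n]}=\varnothing$ when $n<\binom{i}{2}$, i.e. that if $I_{Z_{[n]}}$ needs exactly $i$ generators at $(p,Z)$, then $Z$ must have length at least $\binom{i}{2}$ at $p$. As already noted in the text, the number of generators of $I_{Z_{[n]}}$ at $(p,Z)$ is exactly the minimal number of generators of $I_Z \subseteq \OO_{S,p}$; localizing at $p$ and completing, we are asking: if $I \subseteq \CC[[x,y]]$ is an ideal of colength $n$ with $\mu(I) = \dim_\CC I/\mathfrak{m}I = i$, how small can $n = \dim_\CC \CC[[x,y]]/I$ be? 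So the whole lemma becomes the inequality $\operatorname{colength}(I) \geq \binom{\mu(I)}{2}$ for ideals in a $2$-dimensional regular local ring.

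First I would invoke a theorem of Briançon (or the Hilbert--Burch structure theorem) which says that an $\mathfrak{m}$-primary ideal $I$ in $\CC[[x,y]]$ with $\mu(I) = i$ has a free resolution $0 \to \OO^{i-1} \xrightarrow{M} \OO^{i} \to I \to 0$ where $M$ is an $i \times (i-1)$ matrix whose maximal minors generate $I$ (up to a unit), and whose entries lie in $\mathfrak{m}$ — in fact one can take $M$ in a normal form where the entries are suitably ordered in degree. The key numerical input is that the colength of $I$ equals the colength of the ideal of maximal minors, and the minimal such colength, given that all $i(i-1)$ entries are in the maximal ideal, is achieved when $M$ is (up to coordinates) the ``staircase'' matrix with $x$'s on the diagonal and $y$'s on the subdiagonal (the ideal is then $(x^i, x^{i-1}y, \dots, y^{i-1})$-like... wait, that needs recomputing). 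The cleanest route: the minimal colength of an ideal with $i$ generators in $\CC[[x,y]]$ is $\binom{i}{2}$, realized by $\mathfrak{m}^{i-1}$, which has $\mu = i$ and colength $1 + 2 + \cdots + (i-1) = \binom{i}{2}$. So the claim is that $\mathfrak{m}^{i-1}$ minimizes colength among ideals needing $i$ generators.

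For the proof of that minimization, I would argue as follows: if $\mu(I) = i$, consider the associated graded or simply note that $I \subseteq \mathfrak{m}^{i-1}$ is forced. Indeed, if $I \not\subseteq \mathfrak{m}^{i-1}$, then $I$ contains an element of order $\leq i-2$; but in $\CC[[x,y]]$, an $\mathfrak{m}$-primary ideal containing an element of order $d$ can be generated by at most $d+1$ elements (this follows from the Hilbert--Burch resolution: the generators can be chosen of orders $d \le d_1 \le \dots$, and a degree count in the $2$-dimensional case forces at most $d+1$ of them). Hence $\mu(I) \leq i-1$, a contradiction. Therefore $I \subseteq \mathfrak{m}^{i-1}$, so $\operatorname{colength}(I) \geq \operatorname{colength}(\mathfrak{m}^{i-1}) = \binom{i}{2}$. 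Combining with the fact that $\operatorname{colength}_p(I_Z) \leq \operatorname{length}(Z) = n$ (with equality when $Z$ is supported at $p$), we get: if $(p,Z) \in W_{i,[n]}$ then $n \geq \binom{i}{2}$, which is the contrapositive of the lemma.

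The main obstacle I expect is pinning down precisely the implication ``$I$ contains an element of order $d$ $\Rightarrow$ $\mu(I) \leq d+1$'' with a clean citation or a short self-contained argument; the Hilbert--Burch / Briançon normal-form machinery does it, but one must be careful that the bound is exactly $d+1$ and not something weaker, and that it applies to all $\mathfrak{m}$-primary ideals (not just monomial ones). An alternative that sidesteps the structure theory is a direct Macaulay-type argument on the Hilbert function of $\OO/I$: write $h_k = \dim \mathfrak{m}^k/(\mathfrak{m}^k \cap I + \mathfrak{m}^{k+1})$... but the slickest is probably just: the number of minimal generators of $I$ is $\dim_\CC I/\mathfrak{m}I$, and using the exact sequence $0 \to \mathfrak{m}I/\mathfrak{m}^2 \cap \text{(stuff)} \dots$ — honestly the Hilbert--Burch route is cleanest, so I would cite Briançon's description of ideals in $\CC\{x,y\}$ and extract the generator-order bound from the shape of the resolution matrix.
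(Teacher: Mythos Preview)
Your proof is correct and follows essentially the same route as the paper's: both reduce to the local statement that an $\mathfrak{m}$-primary ideal in $\CC[[x,y]]$ containing an element of order $d$ needs at most $d+1$ generators, from which $\mu(I)=i$ forces $I\subseteq\mathfrak{m}^{i-1}$ and hence $\operatorname{colength}(I)\geq\binom{i}{2}$. The paper dispatches the step you flagged as the main obstacle by citing \cite[Corollary~3.9]{E05} (the Hilbert--Burch consequence you anticipated), so your concern there is exactly the point where the paper's citation lands.
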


\begin{proof}
Consider a pair $(p,\xi) \in S \times S^{[n]}$.
Denote by $l$ the length of $\xi$ supported at $p$.
If $\binom{i-1}{2} \leq l <\binom{i}{2}$, then $\xi$ locally lies on a degree $i-2$ curve at $p$.
By Corollary 3.9 in \cite{E05},  $\xi$ is locally cut out by at most $i-1$ equations. 
Thus, $(p,\xi)$ is in $W_{j,[n]}$ for some $j\leq i-1$.
It follows immediately that if $n< \binom{i}{2}$, then $W_{i,[n]} = \varnothing$ as desired.
\end{proof}

We want bound the dimension in the case these are nonempty.

\begin{lemma}\label{lemma:betterWiBound}
The codimension of $W_{i,[n]}$ is at least $\binom{i}{2}+1$ for $i\geq 2$ if $n\geq \binom{i}{2}$.
\end{lemma}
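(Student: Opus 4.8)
The plan is to stratify $W_{i,[n]}$ by the length $\ell$ of the part of $\xi$ supported at the marked point $p$ and to bound each stratum separately. Since the minimal number of generators of $I_{Z_{[n]}}$ at $(p,\xi)$ is the minimal number of generators of the local ideal $I_{\xi,p}\subseteq\mathcal O_{S,p}$, which depends only on the part $\xi_p$ of $\xi$ supported at $p$, the stratum of $W_{i,[n]}$ where $\xi_p$ has length $\ell$ is contained in the image of the injective gluing map $(p,\zeta,\xi')\mapsto (p,\zeta\cup\xi')$ from the incidence variety $\{(p,\zeta,\xi'): p\in S,\ \zeta\in\mathrm{Hilb}^{\ell}(\mathcal O_{S,p}),\ \xi'\in S^{[n-\ell]},\ p\notin\mathrm{supp}(\xi')\}$. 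This variety has dimension $2+(\ell-1)+2(n-\ell)$: the point $p$ contributes $2$, the punctual scheme $\zeta$ contributes $\dim\mathrm{Hilb}^{\ell}(\mathcal O_{S,p})=\ell-1$ (classical, due to Brian\c{c}on), and $\xi'$ ranges over an open subset of $S^{[n-\ell]}$. Hence the $\ell$-stratum of $W_{i,[n]}$ has dimension at most $2n+1-\ell$.

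It then remains to observe that on $W_{i,[n]}$ one has $\ell\ge\binom{i}{2}$, and this is exactly the input behind Lemma \ref{lem: 1 step emptiness}: a length-$\ell$ punctual scheme at $p$ imposes at most $\ell$ conditions on the $\binom{i}{2}$-dimensional space of plane curves of degree at most $i-2$ through $p$, so if $\ell<\binom{i}{2}$ then $\xi$ lies locally on such a curve and, by \cite[Corollary 3.9]{E05}, is locally cut out by at most $i-1$ equations, contradicting that it needs exactly $i$. Since there are only finitely many strata (those with $\binom{i}{2}\le\ell\le n$), taking the maximum over $\ell$ gives
\[
    \dim W_{i,[n]} \ \le\ 2n+1-\binom{i}{2},
    \qquad
    \mathrm{codim}\, W_{i,[n]} \ \ge\ (2n+2)-\Bigl(2n+1-\tbinom{i}{2}\Bigr) \ =\ \binom{i}{2}+1,
\]
which is the claim.

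The step needing the most care is the gluing/product description of $S^{[n]}$ at a point whose support splits between $p$ and a disjoint locus: this is standard, but must be invoked precisely enough to see that the three batches of parameters genuinely add (rather than interact) and that ``supported away from $p$'' is an open condition costing no dimension. It is reassuring that the numerology is exactly tight: the hypothesis $n\ge\binom{i}{2}$ enters only through $\ell\ge\binom{i}{2}$, and no sharper estimate on the locus of colength-$\ell$ ideals requiring $i$ generators inside $\mathrm{Hilb}^{\ell}$ is needed — raising $\ell$ by one increases the punctual contribution by one but decreases the $S^{[n-\ell]}$ contribution by two, so the worst stratum is the one with the smallest admissible length $\ell=\binom{i}{2}$.
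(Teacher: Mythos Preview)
Your proof is correct and follows essentially the same approach as the paper: stratify $W_{i,[n]}$ by the length $\ell$ of the component of $\xi$ supported at $p$, use Brian\c{c}on's bound $\dim \mathrm{Hilb}^\ell(\mathcal{O}_{S,p}) = \ell-1$ together with the product decomposition to get $\dim \leq 2n+1-\ell$, and then invoke the constraint $\ell \geq \binom{i}{2}$ from the argument of Lemma~\ref{lem: 1 step emptiness}. Your write-up is somewhat more detailed in justifying the gluing step and the $\ell \geq \binom{i}{2}$ bound, but the skeleton is identical to the paper's.
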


\begin{proof}
Consider a pair $(p,\xi) \in S \times S^{[n]}$.
Let $\eta$ be the subscheme of $\xi$ which is supported at $p$, and denote the length of $\eta$ by $l$.
The dimension of length $l$ schemes supported at a point is $l-1$ \cite{B77}.
Since the residual scheme of $\xi$ to $\eta$ has length $n-l$, it is a point of $\left(S\slash \{p\}\right)^{[n-l]}$, which has dimension $2(n-l)$.
Thus, the set of schemes of length $n$ with length $l$ supported at $p$ is dimension $2n-l-1$.
By varying the point $p$, we see that the dimension of schemes with exactly $l$ points supported at a single (moving) point and $(n-l)$ points supported elsewhere is dimension $2n-l+1$.
This dimension is maximized by minimizing $l$, but we again note that to be in $W_{i,[n]}$, we must have that $l \geq \binom{i}{2}$.
Thus, the dimension of  $W_{i,[n]}$ is at most $2n-\binom{i}{2}+1$ if $n\geq \binom{i}{2}$, and the result follows.
\end{proof}

Now that we have bounded the dimension of $W_{i,[n]}$, we need to bound the dimensions of the preimage of $W_{i,[n]}$ in $S \times S^{[n,n+1]}$ the forgetful map.
However, the dimension of the fibers does not have a consistent bound over all of $W_{i,[n]}$.
In order to work around this, we need a new collection of sets, where the fiber dimension has a consistent bound, which are 
\[W'_{i,[n]} = \{(p,\xi)\in S \times S^{[n]}:i= \max\{j: (q,\xi)\in W_{j,[n]} \text{ for some } q\}\}.\]
In words, the locus $W_{i,[n]}'$ contains pairs $(p,\xi)$ such that $\II_\xi$ requires $i$ generators at some point of its support (not necessarily at $p$) and no more than $i$ generators at any other point.
\begin{lemma}\label{lem: constant fiber dimension}
The dimension of the fibers over $W'_{i,[n]}$ in the forgetful map $(\mathrm{id},\pi_n): S\times S^{[n,n+1]} \to S \times S^{[n]}$ are dimension $2$ if $i$ is $1$ or $2$ and dimension $i-1$ if $i\geq 3$.
\end{lemma}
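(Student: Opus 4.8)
\emph{Proof plan.} The plan is to identify each fiber of $(\mathrm{id},\pi_n)$ with a projectivized ideal sheaf on $S$ itself and then to compute its dimension by stratifying along the residual‑point fibration. First I would recall that $S^{[n,n+1]} = \PP(\II_{Z_{[n]}})$ as a scheme over $S^{[n]}\times S$. Hence the fiber of $\pi_n\colon S^{[n,n+1]}\to S^{[n]}$ over a point $\xi\in S^{[n]}$ is $\PP\bigl(\II_{Z_{[n]}}|_{\{\xi\}\times S}\bigr)$, where the factor $S$ records the residual point. Because $Z_{[n]}$ is flat over $S^{[n]}$, restricting the sequence $0\to\II_{Z_{[n]}}\to\OO_{S^{[n]}\times S}\to\OO_{Z_{[n]}}\to 0$ to $\{\xi\}\times S$ remains exact, so $\II_{Z_{[n]}}|_{\{\xi\}\times S}\cong\II_\xi$, the ideal sheaf of $\xi$ inside $S$. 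Since the $S$‑factor in the source of $(\mathrm{id},\pi_n)$ is carried along trivially, the fiber of $(\mathrm{id},\pi_n)$ over any $(p,\xi)$ is isomorphic to $\PP(\II_\xi)$.

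Next I would study $\PP(\II_\xi)$ through its structure morphism $\rho\colon\PP(\II_\xi)\to S$, which is precisely the restriction of the residual map. For $q\in S$, the fiber $\rho^{-1}(q)=\PP(\II_\xi\otimes\CC(q))$ is a projective space of dimension $\mu_q-1$, where $\mu_q$ is the minimal number of generators of the stalk $\II_{\xi,q}$; by Nakayama's lemma $\mu_q=\dim_\CC\II_\xi\otimes\CC(q)$. If $q\notin\Supp\xi$ then $\II_{\xi,q}=\OO_{S,q}$, so $\mu_q=1$ and $\rho^{-1}(q)$ is a reduced point; if $q\in\Supp\xi$ then $\mu_q\ge 2$. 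Stratifying $\PP(\II_\xi)$ by the fiber dimension of $\rho$, the open stratum over $S\setminus\Supp\xi$ is $2$‑dimensional (and nonempty, as $\Supp\xi$ is finite), while the closed stratum over each of the finitely many support points $q$ is $(\mu_q-1)$‑dimensional. Therefore $\dim\PP(\II_\xi)=\max\{\,2,\ \max_{q\in\Supp\xi}(\mu_q-1)\,\}$.

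Finally I would invoke the defining property of $W'_{i,[n]}$: for $(p,\xi)$ in this locus one has $\mu_q\le i$ for all $q\in S$ and $\mu_q=i$ for at least one $q\in\Supp\xi$, so $\max_{q\in\Supp\xi}(\mu_q-1)=i-1$. Thus the fiber of $(\mathrm{id},\pi_n)$ over $(p,\xi)$ has dimension $\max\{2,i-1\}$, which equals $2$ for $i\in\{1,2\}$ — indeed $W'_{1,[n]}$ is empty once $n\ge 1$, since support points always contribute $\mu_q\ge 2$ — and equals $i-1$ for $i\ge 3$, as asserted. The only steps requiring genuine care are the flat base change identification $\II_{Z_{[n]}}|_{\{\xi\}\times S}\cong\II_\xi$ and the dimension bookkeeping in the stratification; the case split in the statement is not an accident but reflects the fact that the constant $2$‑dimensional contribution coming from residual points off $\Supp\xi$ is the dominant one exactly when $i\le 3$.
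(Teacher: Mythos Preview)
Your argument is correct and follows essentially the same route as the paper: you identify the fiber of $(\mathrm{id},\pi_n)$ over $(p,\xi)$ with $\pi_n^{-1}(\xi)\cong\PP(\II_\xi)$, stratify it via the residual map $\rho\colon\PP(\II_\xi)\to S$, and observe that the generic fibers contribute a $2$-dimensional piece while the fibers over $\Supp\xi$ are projective spaces of dimension at most $i-1$. The paper argues identically, just without explicitly naming the fiber as $\PP(\II_\xi)$ or spelling out the flat base change step.
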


\begin{proof}
Recall that the fibers of the map $\phi: S^{[n,n+1]} \to S^{[n]}\times S$ are $(i-1)$ over any $\xi$ such that $(p,\xi) \in W'_{i,[n]}$ for any $p$.
Then the fiber of $f$ over a point $(p,\xi) \in W'_{i,[n]}$ is the union over all fibers of $\phi$ over points $(q,\xi)$ where $q\in S$.
All but finitely many of these fibers are single points.
The union of those fibers is a 2-dimensional variety so the entire fiber is the union of this surface and finitely many projective spaces, which are all dimension at most $i-1$.
Thus, the entire fiber is dimension at most $i-1$ for $i\geq 3$ and is dimension $2$ if $i=1$ or $i=2$.
\end{proof}

We want to consider the preimage of $W_{i,[n]}$ as the union over the preimages of the sets $W_{i,[n]} \cap W'_{j,[n]}$ where $j\geq i$.
Since we know the dimension of the fibers of $W_{i,[n]}'$, we need to compute the dimension of the intersections $W_{i,[n]} \cap W'_{j,[n]}$.
We again first note when this intersection is empty.

\begin{lemma}\label{lem: intersection emptiness}
$W_{i,[n]} \cap W'_{j,[n]} = \varnothing$ if $j<i$
\end{lemma}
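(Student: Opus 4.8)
The plan is to unwind the definitions of the two loci; no geometry is needed. Recall that $(p,\xi) \in W_{i,[n]}$ means precisely that the ideal $\II_\xi$ requires exactly $i$ generators at the point $p$, while $(p,\xi) \in W'_{j,[n]}$ means that $j$ equals the \emph{maximum}, over all points $q \in S$, of the minimal number of generators of $\II_\xi$ at $q$ (equivalently $j = \max\{m : (q,\xi) \in W_{m,[n]} \text{ for some }q\}$).

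So first I would suppose $(p,\xi) \in W_{i,[n]} \cap W'_{j,[n]}$ and argue directly. Since $\II_\xi$ needs $i$ generators at the particular point $p$, the quantity $\max\{m : (q,\xi) \in W_{m,[n]} \text{ for some } q \in S\}$ is at least $i$: taking $q = p$ already witnesses a value of $i$. But by the definition of $W'_{j,[n]}$ this maximum is exactly $j$, hence $j \geq i$. Contrapositively, if $j < i$ then $W_{i,[n]} \cap W'_{j,[n]} = \varnothing$, which is the claim.

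The only point worth a remark is that for $i \geq 2$ any point $p$ at which $\II_\xi$ needs $i$ generators automatically lies in $\Supp(\xi)$ — away from the support $\II_\xi$ is locally the unit ideal $\OO_{S,p}$, generated by one element — so the maximum above is really taken over the nonempty finite set $\Supp(\xi)$; for $i = 1$ the inequality $j \geq 1$ is immediate since each local minimal number of generators is at least $1$. There is no genuine obstacle here: the lemma is a bookkeeping statement recording the compatibility of the two stratifications $\{W_{i,[n]}\}$ and $\{W'_{j,[n]}\}$, and its role is purely to let one later write the preimage of $W_{i,[n]}$ under the forgetful map as a finite union of the preimages of the pieces $W_{i,[n]} \cap W'_{j,[n]}$ with $j \geq i$, each of which has controlled fiber dimension by Lemma \ref{lem: constant fiber dimension}.
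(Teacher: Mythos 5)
Your argument is correct and is exactly the paper's reasoning: the paper disposes of this lemma in one line by noting that a point $(p,\xi) \in W_{i,[n]}$ requires $i$ generators at $p$, so the maximum defining $W'_{j,[n]}$ forces $j \geq i$. Your write-up merely spells out this same observation in more detail.
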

This lemma is immediate since a point $(p,Z) \in W_{i,[n]}$ requires $i$ generators at $p$.

\begin{lemma}\label{lem: intersection dim}
The codimension of $W_{i,[n]} \cap W'_{j,[n]}$ is at least $\binom{j}{2}+\binom{i}{2}$ for $j>i\geq 2$ and at least $\binom{i}{2}+1$ for $j=i\geq 2$.
\end{lemma}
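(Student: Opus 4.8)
The plan is to prove the codimension bound for $W_{i,[n]} \cap W'_{j,[n]}$ by the same stratification argument used in Lemma \ref{lemma:betterWiBound}, but now tracking the local behavior of $\xi$ at \emph{two} points: the point $p$ where $i$ generators are needed, and the point $q$ realizing the maximal generator number $j$. In the case $j = i$ there is nothing new: $W_{i,[n]} \cap W'_{i,[n]}$ is contained in $W_{i,[n]}$, so its codimension is at least $\binom{i}{2}+1$ by Lemma \ref{lemma:betterWiBound}. So the substance is the case $j > i$.

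For $j > i$, the point $q$ with $j$ generators is necessarily distinct from $p$ (if they coincided, then $p$ would need $j > i$ generators, contradicting $(p,\xi) \in W_{i,[n]}$; here I would invoke Lemma \ref{lem: intersection emptiness} and the remark that the generator count at a point is intrinsic). So I would stratify by the lengths $l_p, l_q$ of the components of $\xi$ supported at $p$ and $q$ respectively. As in Lemma \ref{lem: 1 step emptiness}, to need $i$ generators at $p$ one must have $l_p \geq \binom{i}{2}$, and similarly $l_q \geq \binom{j}{2}$; in particular $l_p + l_q \leq n$ forces $n \geq \binom{i}{2} + \binom{j}{2}$ (so the bound is vacuous otherwise, matching the statement). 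Now I count dimensions: the subschemes supported at $p$ of length $l_p$ form a family of dimension $l_p - 1$ (by \cite{B77}), those at $q$ of length $l_q$ a family of dimension $l_q - 1$, the residual of length $n - l_p - l_q$ moves in dimension $2(n - l_p - l_q)$, and the two moving points $p, q$ contribute $4$. Adding, the stratum has dimension at most
\[
(l_p - 1) + (l_q - 1) + 2(n - l_p - l_q) + 4 = 2n - l_p - l_q + 2 \leq 2n - \binom{i}{2} - \binom{j}{2} + 2.
\]
Since $\dim(S \times S^{[n]}) = 2n+2$, this gives codimension at least $\binom{i}{2} + \binom{j}{2}$, as claimed.

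The main obstacle I anticipate is not the dimension count itself but making sure the stratification is complete and the "generator count at a point" really is a well-defined, upper-semicontinuous-enough invariant so that every pair in $W_{i,[n]} \cap W'_{j,[n]}$ lands in one of the strata I count, with $l_p \geq \binom{i}{2}$ and $l_q \geq \binom{j}{2}$ simultaneously. The subtlety is that the definition of $W'_{j,[n]}$ only says the \emph{maximum} generator count over the support is $j$, so I must be careful that requiring $i$ generators at $p$ and (at least) $j$ generators at the witnessing point $q \neq p$ are compatible constraints on the \emph{same} scheme $\xi$ — this is where the inequality $l_p + l_q \leq n$ enters and why I should separate out the degenerate sub-case where the witnessing point for $j$ happens to coincide with $p$ (excluded above) versus where a third point could also carry many generators (harmless, since I am only bounding from above and can always forget extra structure). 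I would also double-check the edge behavior at $i = 2$: there $\binom{i}{2} = 1$, and the claimed codimension $\binom{j}{2} + 1$ should drop out of the formula, which it does.
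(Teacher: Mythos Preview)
Your proposal is correct and follows essentially the same argument as the paper: for $j=i$ you use containment in $W_{i,[n]}$ (the paper notes it is open there, which gives the same bound), and for $j>i$ you stratify by the local lengths $l_p,l_q$ at $p$ and the witnessing point $q\neq p$, invoke the bounds $l_p\ge\binom{i}{2}$, $l_q\ge\binom{j}{2}$, and count dimensions exactly as the paper does to get $\dim \le 2n+2-\binom{i}{2}-\binom{j}{2}$. Your extra remarks about the well-definedness of the generator count and the edge case $i=2$ are harmless caution; no additional ideas are needed.
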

\begin{proof}
Since $W_{i,[n]} \cap W'_{i,[n]}$ is open in $W_{i,[n]}$ so has the same dimension bound.
Let $(p,\xi) \in W_{i,[n]} \cap W'_{j,[n]}$ with $j>i\geq 2$.
Then $\xi$ requires $i$ local generators at $p$ and $j$ local generators at some other point $q$.
Let $\ell$ and $m$ be the length of $\xi$ supported at $p$ and $q$, respectively. 
As before, we can see the dimension of the locus of schemes with those lengths at those points is $2n+2-\ell-m$.
This is maximized by minimizing $\ell$ and $m$, but those are each at least $\binom{i}{2}$ and $\binom{j}{2}$, respectively.
This means that the dimension of $W_{i,[n]} \cap W'_{j,[n]}$ is bounded above by $2n+2-\binom{j}{2}-\binom{i}{2}$ for $j>i\geq 2$, and the result follows.
\end{proof}

We can combine Lemmas \ref{lem: constant fiber dimension} and \ref{lem: intersection dim} to give the dimensions of the fibers over $W_{i,[n]}$.

\begin{lemma}
The codimension of the preimage of the forgetful map over $W_{i,[n]}$ is at least $2$ if $i=2$ and at least $\binom{i-1}{2}+3$ if $i\geq 3$. 
\end{lemma}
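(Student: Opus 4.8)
The plan is to bound the dimension of the preimage of $W_{i,[n]}$ under the forgetful map $f = (\mathrm{id},\pi_n) : S \times S^{[n,n+1]} \to S \times S^{[n]}$ by stratifying $W_{i,[n]}$ according to the maximal number of generators needed at \emph{any} point of the support, i.e. by the loci $W'_{j,[n]}$ for $j \geq i$. Concretely, I would write
\[
    f^{-1}(W_{i,[n]}) = \bigcup_{j \geq i} f^{-1}\bigl(W_{i,[n]} \cap W'_{j,[n]}\bigr),
\]
and bound the dimension of each piece separately, using the dimension bounds on $W_{i,[n]} \cap W'_{j,[n]}$ from Lemma \ref{lem: intersection dim} together with the uniform fiber-dimension bounds over $W'_{j,[n]}$ from Lemma \ref{lem: constant fiber dimension}. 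The point of introducing the $W'_{j,[n]}$ is precisely that the fiber dimension of $f$ is not constant over $W_{i,[n]}$ itself, but it \emph{is} uniformly bounded (by $\max\{2, j-1\}$) over each $W'_{j,[n]}$.

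First I would handle the case $i = 2$. Here the relevant strata are $W_{2,[n]} \cap W'_{j,[n]}$ for $j \geq 2$. For $j = 2$, Lemma \ref{lem: intersection dim} gives codimension $\geq \binom{2}{2} + 1 = 2$ in $S \times S^{[n]}$, and by Lemma \ref{lem: constant fiber dimension} the fibers of $f$ have dimension $2$; since $\dim S^{[n,n+1]} = 2n+2 = \dim(S \times S^{[n]})$, the fiber dimension is exactly the expected one and the preimage has codimension $\geq 2$. For $j \geq 3$, Lemma \ref{lem: intersection dim} gives codimension $\geq \binom{j}{2} + \binom{2}{2} = \binom{j}{2} + 1$ while the fibers have dimension $\leq j - 1$, so the preimage has codimension at least $\binom{j}{2} + 1 - (j-1) = \binom{j-1}{2} + 1 \geq \binom{2}{2}+1 = 2$ (using $\binom{j}{2} - (j-1) = \binom{j-1}{2}$), which is decreasing-free in $j$ and at least $2$. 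Taking the minimum over all $j \geq 2$ gives codimension $\geq 2$ for $f^{-1}(W_{2,[n]})$.

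Next, for $i \geq 3$ the same computation applies. Over the diagonal stratum $j = i$, the base has codimension $\geq \binom{i}{2} + 1$ and the fibers dimension $\leq i - 1$, so the preimage has codimension $\geq \binom{i}{2} + 1 - (i-1) = \binom{i-1}{2} + 1$, which is strictly smaller than the claimed bound $\binom{i-1}{2} + 3$. So the naive argument gives the wrong constant, and this is where the real work lies: one must improve either the bound on $W_{i,[n]}$ itself (Lemma \ref{lemma:betterWiBound} gives codimension $\geq \binom{i}{2}+1$, i.e. the subscheme supported at $p$ has length $\ell \geq \binom{i}{2}$) or the contribution of the extra $(n+1)$st point. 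The key extra input should be that when $\mathcal{I}_\xi$ needs $i \geq 3$ generators at $p$, adding a residual point \emph{at $p$} forces the length there to jump and the number of generators to stay large, so that the length bound at $p$ in the relevant locus of $S \times S^{[n,n+1]}$ is genuinely $\ell \geq \binom{i}{2} + $ (something), cutting the fiber contribution. I would re-examine the proof of Lemma \ref{lem: constant fiber dimension}: the $2$-dimensional family of fibers (residual point moving away from the support) lands over $W_{i-1}$-type loci after forgetting, but the $(i-1)$-dimensional $\mathbb{P}^{i-1}$ fibers only occur when the residual point sits at the bad point $p$, and there the corresponding point of $S \times S^{[n]}$ already lies in a codimension-$\binom{i}{2}$-or-better locus \emph{with an extra length constraint}. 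Combining: the $\mathbb{P}^{i-1}$-fiber part contributes codimension $\geq \binom{i}{2} + 1 + 2 - (i-1) = \binom{i-1}{2} + 3$ (the "$+2$" coming from pinning the residual point to lie in the length-$\ell$ part over $p$ rather than roaming all of $S$), while the $2$-dimensional generic-fiber part, lying over $W_{i,[n]} \cap W'_{j,[n]}$, contributes codimension $\geq \binom{i}{2}+1 - 0 = \binom{i}{2}+1 \geq \binom{i-1}{2}+3$ for $i \geq 3$. The main obstacle is bookkeeping these two fiber-types against the correct base strata and verifying the "$+2$" improvement honestly — i.e. showing that the locus in $S \times S^{[n,n+1]}$ where the $(n+1)$st point coincides with the $i$-generator point of $\xi$ has the claimed extra codimension — rather than any deep new idea.
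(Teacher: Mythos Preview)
Your overall strategy---decompose $W_{i,[n]}$ as $\bigcup_{j \geq i} W_{i,[n]} \cap W'_{j,[n]}$ and bound each piece using Lemma~\ref{lem: intersection dim} for the base and Lemma~\ref{lem: constant fiber dimension} for the fibers---is exactly the paper's argument. The problem is a dimension-counting slip that throws your constants off by precisely $2$, after which you chase a phantom deficit.

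You write ``$\dim S^{[n,n+1]} = 2n+2 = \dim(S \times S^{[n]})$,'' but the domain of $f = (\mathrm{id}, \pi_n)$ is $S \times S^{[n,n+1]}$, of dimension $2n+4$. The generic fiber of $f$ therefore has dimension $2$, not $0$, and the correct bound is
\[
\mathrm{codim}_{S \times S^{[n,n+1]}} f^{-1}(Z) \;\geq\; \mathrm{codim}_{S \times S^{[n]}} Z \;-\; \bigl(\text{fiber dim over }Z - 2\bigr).
\]
On the diagonal stratum $j = i \geq 3$ this gives
\[
\Bigl(\tbinom{i}{2}+1\Bigr) - \bigl((i-1)-2\bigr) \;=\; \tbinom{i}{2} - i + 4 \;=\; \tbinom{i-1}{2} + 3
\]
directly, and for $j > i$ one gets $\binom{i}{2} + \binom{j-1}{2} + 2$, which is larger still. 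So the entire second half of your proposal---the attempt to extract an additional ``$+2$'' by tracking where the residual point lands---is unnecessary: once the relative dimension is accounted for, the straightforward stratification already delivers the claimed bound, exactly as in the paper.
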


\begin{proof}
Since $W_{i,[n]} = \bigcup_{j\geq i} W_{i,[n]} \cap W'_{j,[n]}$ and there are finitely many nonempty intersection, it suffices to show that the codimension of the fiber over $W_{i,[n]} \cap W'_{j,[n]}$ satisfies the desired bounds.

The codimension of $W_{2,[n]} \cap W'_{2,[n]}$ is $2$ and the codimension of $W_{2,[n]} \cap W'_{j,[n]}$ is at least $\binom{j}{2}+1$ for $j>2$ by Lemma \ref{lem: intersection dim}.
Since the fibers over $W'_{2,[n]}$ are $2$-dimensional and the fibers over $W'_{j,[n]}$ are $(j-1)$-dimensional for $j\geq 3$ by Lemma \ref{lem: constant fiber dimension}, the codimension of the fiber over $W_{2,[n]}$ is at least the minimum of $2$ and $\binom{j-1}{2}+1$ with $j>2$, which is $2$.

Similarly, if $i\geq 3$, the codimension of $W_{i,[n]} \cap W'_{i,[n]}$ is $\binom{i}{2}+1$ and the codimension of $W_{i,[n]} \cap W'_{j,[n]}$ is $\binom{j}{2}+\binom{i}{2}$ for $j>i$ by Lemma \ref{lem: intersection dim}.
Since the fibers over $W'_{i,[n]}$ are $(i-1)$-dimensional and the fibers over $W'_{j,[n]}$ are $(j-1)$-dimensional for $j>i$ by Lemma \ref{lem: constant fiber dimension}, the codimension of the fiber over $W_{i,[n]}$ is the minimum of $\binom{i-1}{2}+3$ and $\binom{i}{2}+\binom{j-1}{2}+2$ with $j>i$, which is $\binom{i-1}{2}+3$.
\end{proof}

We finally can consider a dimension bound for $W_{i,[n,n+1]}$.
We again first note when these are empty.
\begin{lemma}
\label{lem: 1 step dim}
$W_{i,[n,n+1]} = \varnothing$ for $i\geq 1$ if  $n+1< \binom{i}{2}$.  
\end{lemma}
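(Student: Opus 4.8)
The plan is to mirror the structure of Lemma~\ref{lem: 1 step emptiness}, transporting the emptiness criterion for $W_{i,[n]}$ along the forgetful map $(\mathrm{id},\pi_n) : S \times S^{[n,n+1]} \to S \times S^{[n]}$. Recall that $W_{i,[n,n+1]}$ records the points $(p,\xi_n,\xi_{n+1})$ where $\II_{Z_{[n,n+1]}}$ needs $i$ generators, and as observed just after its definition, this is exactly the number of local generators of $\II_{\xi_{n+1}}$ at $p$. So the first step is to reduce the problem to a statement purely about a single ideal sheaf: $W_{i,[n,n+1]}$ is nonempty if and only if there exists $(p,\xi_{n+1}) \in S \times S^{[n+1]}$ with $\II_{\xi_{n+1}}$ requiring $i$ generators at $p$ \emph{and} such that $\xi_{n+1}$ contains a length-$n$ subscheme $\xi_n$ (which, since $S$ is a smooth surface, is automatic — one can always peel off a point). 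Thus $W_{i,[n,n+1]} \neq \varnothing$ forces $W_{i,[n+1]} \neq \varnothing$.

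Next I would invoke Lemma~\ref{lem: 1 step emptiness} with $n$ replaced by $n+1$: $W_{i,[n+1]} = \varnothing$ whenever $n+1 < \binom{i}{2}$. Combining this with the reduction in the previous paragraph immediately yields that $W_{i,[n,n+1]} = \varnothing$ when $n+1 < \binom{i}{2}$, which is precisely the claimed statement. The only point requiring a word of care is the direction of the implication: I need that the existence of a point in $W_{i,[n,n+1]}$ produces a point in $W_{i,[n+1]}$ (this is the easy direction, since forgetting the nesting $\xi_n$ and remembering only $(p,\xi_{n+1})$ does exactly this), and then contrapose Lemma~\ref{lem: 1 step emptiness}.

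An alternative, more self-contained route that avoids even citing Lemma~\ref{lem: 1 step emptiness} is to redo its proof directly: for $(p, \xi_n, \xi_{n+1}) \in W_{i,[n,n+1]}$, let $\ell$ be the length of $\xi_{n+1}$ supported at $p$, so $\ell \leq n+1$. If $\ell < \binom{i}{2}$, then by the Briançon-style bound (Corollary~3.9 in \cite{E05}, as used in the proof of Lemma~\ref{lem: 1 step emptiness}) the subscheme $\xi_{n+1}$ lies on a curve of degree $\leq i-2$ near $p$ and hence is locally cut out by at most $i-1$ equations, contradicting membership in $W_{i,[n,n+1]}$; therefore $\ell \geq \binom{i}{2}$, which is impossible if $n+1 < \binom{i}{2}$.

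I do not expect a genuine obstacle here: the lemma is the two-step analogue of Lemma~\ref{lem: 1 step emptiness} and the argument is essentially the same once one identifies the number of generators of $\II_{Z_{[n,n+1]}}$ at $(p,\xi_n,\xi_{n+1})$ with the number of local generators of $\II_{\xi_{n+1}}$ at $p$. The one place to be slightly careful is ensuring that the length-$\ell$ local component of $\xi_{n+1}$ at $p$ can indeed be arbitrary up to $n+1$ — i.e. that nothing in the nesting with $\xi_n$ obstructs realizing the minimal length forced by the generator count — but since on a smooth surface every length-$(n+1)$ scheme admits a length-$n$ subscheme, there is no extra constraint, and the bound $\ell \geq \binom{i}{2}$ together with $\ell \leq n+1$ gives the result.
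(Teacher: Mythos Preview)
Your proposal is correct and matches the paper's approach: the paper simply states that the proof is ``entirely analogous to the proof of Lemma~\ref{lem: 1 step emptiness}'', which is precisely your second (self-contained) route, and your first route is the trivially equivalent reformulation that cites Lemma~\ref{lem: 1 step emptiness} with $n$ replaced by $n+1$ rather than rerunning its argument. One small slip: in your opening sentence you name the forgetful map $(\mathrm{id},\pi_n)$, but the map you actually use---and should use, since the generator count is that of $\II_{\xi_{n+1}}$ at $p$---is $(\mathrm{id},\pi_{n+1}) : S \times S^{[n,n+1]} \to S \times S^{[n+1]}$; the rest of your argument already reflects this correctly.
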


The proof is entirely analogous to the proof of Lemma \ref{lem: 1 step emptiness}.
We then again bound the dimension in the case these are nonempty.

\begin{lemma}\label{lem: 2 step dim}
The codimensions of $W_{1,[n,n+1]}$ and $W_{2,[n,n+1]}$ are $0$ and $2$ respectively.
The codimension of $W_{i,[n,n+1]}$ is at least $\binom{i-2}{2}+3$ for $i \geq 3$.
\end{lemma}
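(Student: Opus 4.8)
The plan is to bound $\dim W_{i,[n,n+1]}$ by pushing forward along the forgetful map $f = (\mathrm{id},\pi_n) : S \times S^{[n,n+1]} \to S \times S^{[n]}$ and combining the work already done. The key observation is that the number of generators of $\II_{Z_{[n,n+1]}}$ at a point $(p,\xi_n,\xi_{n+1})$ is the number of generators of $\II_{\xi_{n+1}}$ at $p$, so $W_{i,[n,n+1]} = f^{-1}(W'_{i,[n+1]})$-type loci pulled back appropriately; more precisely, a point of $W_{i,[n,n+1]}$ lies over a point $(p,\xi_n) \in S \times S^{[n]}$ such that the extension $\xi_{n+1} \supseteq \xi_n$ needs $i$ generators at $p$. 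The cleanest route is: (i) handle $i = 1$ trivially, since $W_{1,[n,n+1]}$ is the complement of $Z_{[n,n+1]}$ and hence has codimension $0$; (ii) for $i = 2$, observe that $W_{2,[n,n+1]}$ maps into the (closure of the) universal family $Z_{[n,n+1]}$, which has codimension exactly $2$ by Lemma~\ref{lem: codim two}, giving the codimension-$2$ bound, with the reverse inequality coming from the fact that a generic point of the universal family has $\II_{\xi_{n+1}}$ needing exactly $2$ local generators at $p$ (a single point added to a smooth point of $\xi_n$).

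For $i \geq 3$, I would argue as follows. If $\II_{\xi_{n+1}}$ needs $i \geq 3$ generators at $p$, then (since adding one point to $\xi_n$ can raise the number of generators at $p$ by at most one) $\II_{\xi_n}$ needs at least $i-1$ generators at $p$, so $(p,\xi_n) \in W_{i-1,[n]} \cup W_{i,[n]}$ — in particular $(p,\xi_n)$ lies in $W_{i-1,[n]} \cap W'_{j,[n]}$ for some $j \geq i-1$, or in $W_{i,[n]} \cap W'_{j,[n]}$ for some $j \geq i$. I would then stratify $W_{i,[n,n+1]}$ by these loci downstairs, bound the base using Lemma~\ref{lem: intersection dim}, and bound the fibers of $f$ over $W'_{j,[n]}$ using Lemma~\ref{lem: constant fiber dimension}. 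Concretely, over a point of $W'_{j,[n]}$ the fiber of $f$ has dimension $2$ if $j \leq 2$ and $j-1$ if $j \geq 3$; the point $(p,\xi_n) \in W_{i-1,[n]}$ is extended to $\xi_{n+1}$ by choosing a point in the fiber, but we also must remember we are in $W_{i,[n,n+1]}$ rather than all of the fiber, so only the sub-locus of extensions that actually raise the generator count to $i$ contributes, and that is codimension $\geq 1$ inside the fiber. Assembling: $\mathrm{codim}\, W_{i,[n,n+1]} \geq \min_{j \geq i-1}\bigl(\mathrm{codim}(W_{i-1,[n]} \cap W'_{j,[n]}) + 1 - (j-1)\bigr)$ together with the analogous term from $W_{i,[n]} \cap W'_{j,[n]}$; plugging in $\mathrm{codim}(W_{i-1,[n]} \cap W'_{j,[n]}) \geq \binom{j}{2} + \binom{i-1}{2}$ for $j > i-1$ and $\geq \binom{i-1}{2}+1$ for $j = i-1$, the minimum over $j$ is attained at $j = i-1$ and yields $\binom{i-1}{2} + 1 + 1 - (i-2) = \binom{i-1}{2} + 4 - i = \binom{i-2}{2} + 3$ after simplification (using $\binom{i-1}{2} - (i-2) = \binom{i-2}{2}$), which is exactly the claimed bound.

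The main obstacle I anticipate is getting the bookkeeping in the $i \geq 3$ case precisely right: one must be careful that the relevant base locus is $W_{i-1,[n]}$ (not $W_{i,[n]}$) and that the fiber contribution is the fiber dimension minus the codimension of the ``generator-count-raising'' sublocus, so that the two $-1$'s and the fiber dimension $j-1$ combine correctly. It is also important to verify that the minimum over the stratification index $j$ is genuinely achieved at the smallest allowed $j$, i.e. that $\binom{j}{2} - (j-1)$ is increasing in $j$ for $j \geq 2$, which is clear since $\binom{j}{2} - (j-1) = \binom{j-1}{2}$. A secondary subtlety is justifying the ``adding one point raises the number of generators at $p$ by at most one'' claim, which follows from the elementary transformation exact sequence $0 \to \II_{\xi_{n+1}} \to \II_{\xi_n} \to \CC(p) \to 0$ and Nakayama: tensoring with $\CC(p)$ shows the minimal number of generators of $\II_{\xi_{n+1}}$ at $p$ differs from that of $\II_{\xi_n}$ by at most $1$.
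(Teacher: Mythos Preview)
Your approach is the same as the paper's: push forward along the forgetful map $f=(\mathrm{id},\pi_n)\colon S\times S^{[n,n+1]}\to S\times S^{[n]}$, stratify the base by the loci $W_{k,[n]}\cap W'_{j,[n]}$, and use Lemmas~\ref{lem: constant fiber dimension} and~\ref{lem: intersection dim}. The paper packages the stratification step into the unnamed lemma immediately preceding this one and then simply quotes that $W_{i,[n,n+1]}\subseteq f^{-1}(W_{i-1,[n]}\cup W_{i,[n]}\cup W_{i+1,[n]})$, treating $i=3$ separately.

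However, your execution has a genuine gap. Your displayed formula
\[
\mathrm{codim}\, W_{i,[n,n+1]}\ \geq\ \mathrm{codim}(W_{i-1,[n]}\cap W'_{j,[n]}) + 1 - (j-1)
\]
forgets that $\dim(S\times S^{[n,n+1]})-\dim(S\times S^{[n]})=2$. The correct conversion is
\[
\mathrm{codim}_{S\times S^{[n,n+1]}} f^{-1}(A)\ =\ \mathrm{codim}_{S\times S^{[n]}}(A)+2-\max(2,j-1),
\]
so over $W'_{j,[n]}$ with $j\geq 3$ you should get $\mathrm{codim}(A)+3-j$, not $\mathrm{codim}(A)+2-j$. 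Separately, your arithmetic is off: $\binom{i-1}{2}+4-i=\binom{i-2}{2}+2$, not $\binom{i-2}{2}+3$ (check $i=4$: the left side is $3$, while $\binom{2}{2}+3=4$). So as written your argument only establishes codimension $\geq\binom{i-2}{2}+2$, one short of the claim. If you fix the codimension conversion, the extra ``$+1$ for raising'' is not even needed for $i\geq 4$: plugging $j=i-1$ into the correct formula gives $\binom{i-1}{2}+1+3-(i-1)=\binom{i-2}{2}+3$ on the nose. The case $i=3$ genuinely needs the extra observation that the generic point of $f^{-1}(W_{2,[n]})$ lies in $W_{2,[n,n+1]}$, which is how the paper handles it and is what your ``raising'' heuristic amounts to.

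Two smaller issues: your Nakayama argument at the end correctly shows the number of generators changes by at most one \emph{in either direction}, so $(p,\xi_n)$ can also land in $W_{i+1,[n]}$; you omit this case (it does not affect the bound, but the inclusion $(p,\xi_n)\in W_{i-1,[n]}\cup W_{i,[n]}$ is false as stated). Also, your ``codimension $\geq 1$ for the raising sublocus'' inside each fiber is asserted rather than proved; it is true for the reason the paper uses for $i=3$ (the generic $\xi_{n+1}$ over $(p,\xi_n)\in W_{i-1,[n]}$ has residual point away from $p$), but you should say this rather than treat it as a fiberwise condition.
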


\begin{proof}
We first consider the base cases of $i=1$ and $i=2$.
$W_{1,[n,n+1]}$ is an open set of the entire product so it has codimension $0$.
Similarly, $W_{2,[n,n+1]}$ is an open set of the universal family so it has codimension $2$.

For the general case, we recall the forgetful map, $(\mathrm{id},\pi_n): S\times S^{[n,n+1]}\to S\times S^{[n]}$, which forgets the length $n+1$ scheme, i.e. $(p,\xi_n,\xi_{n+1}) \mapsto (p,\xi_n)$.
By \cite{ES98}, we know that $W_{i,[n,n+1]}$ is contained in the preimage of $W_{i-1,[n]}\cup W_{i,[n]}\cup W_{i+1,[n]}$
The minimum codimension of those three fibers is $\binom{i-2}{2}+3$ for $i\geq 4$.

The analoguous bound for the codimension of $W_{3,[n,n+1]}$ is at least 2.
However, an open set of the preimage of $W_{2,[n,n+1]}$ is an open subset of the universal family which is not in $W_{3,[n,n+1]}$, so $W_{3,[n,n+1]}$ has codimension at least 3.
\end{proof}

Note that the previous lemma shows that the codimension of $W_{i,[n,n+1]}$ is at least $i$ for all $i$.

\subsubsection{Irreducibility proofs}

We have seen that $S^{[n,n+1,n+2]}$ is $\PP(\II_{Z_{[n,n+1]}})$, and below, we use the dimension estimates of the previous subsection to prove that $S^{[n,n+1,n+2]}$ is irreducible. The subscheme $Z_{[n,n+1]}$ has two irreducible components (see Proposition \ref{prop:Zn,n+1IsReducible}). One of its components is 
\[
    Z_{[n,n+1]}^n =  \{(p,\xi_n,\xi_{n+1}) \in S \times S^{[n,n+1]} : p \in \xi_n \subseteq \xi_{n+1}\} \cong S^{[1,n,n+1]}. 
\]
Our resolution of singularities for $S^{[n,n+1,n+2]}$ involves $\PP(\II_{Z_{[n,n+1]}^n})$. 

\begin{thm}\label{thm:TwoStepIrreducible} 
  Let $S$ be a smooth surface.
  \begin{enumerate}
     \item $S^{[n,n+1,n+2]}$ is irreducible of dimension $2n+4$ and is isomorphic to the blowup of $Z_{[n,n+1]}$ in $S\times S^{[n,n+1]}$.
     \item $\PP(\II_{Z_{[n,n+1]}^n})$ is irreducible of dimension $2n+4$ and is isomorphic to the blowup of $Z_{[n,n+1]}^n$ in $S \times S^{[n,n+1]}$. Moreover, the exceptional divisor is irreducible.
  \end{enumerate}
\end{thm}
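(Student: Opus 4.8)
The plan is to deduce both parts of Theorem~\ref{thm:TwoStepIrreducible} from the Ellingsrud--Str\o mme criterion, Proposition~\ref{prop:generalIrreducibilityCriterion}, applied over the smooth irreducible variety $X = S\times S^{[n,n+1]}$, which has dimension $2+(2n+2)=2n+4$. For part (1) I would take $W = Z_{[n,n+1]}$ and invoke the identification $S^{[n,n+1,n+2]}=\PP(\II_{Z_{[n,n+1]}})$ from Section~\ref{sec: background}; for part (2) I would take $W = Z_{[n,n+1]}^n$. In each case Proposition~\ref{prop:generalIrreducibilityCriterion}(1) gives that $\PP(\II_W)$ is irreducible and equal to $\mathrm{Bl}_W X$, hence of dimension $2n+4$ since blowing up preserves dimension, and Proposition~\ref{prop:generalIrreducibilityCriterion}(2) gives the irreducibility of the exceptional divisor in part (2).

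For part (1) this is just a bookkeeping of the estimates already in hand. Since $S^{[n,n+1]}$ is smooth and irreducible, so is $X$; by Lemma~\ref{lem: codim two}, $Z_{[n,n+1]}$ is a Cohen--Macaulay subscheme of codimension two, so $\II_{Z_{[n,n+1]}}$ admits a length-two locally free resolution; and by Lemma~\ref{lem: 2 step dim} together with the remark after it, $W_{i,[n,n+1]}$ has codimension at least $i$ in $X$ for every $i\geq 2$. Proposition~\ref{prop:generalIrreducibilityCriterion}(1) then applies verbatim.

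For part (2) the input for Proposition~\ref{prop:generalIrreducibilityCriterion} has three pieces. First, $Z_{[n,n+1]}^n = Z_{[n]}\times_{S^{[n]}}S^{[n,n+1]}$ is finite and flat over the smooth variety $S^{[n,n+1]}$, being the base change of the finite flat map $Z_{[n]}\to S^{[n]}$, hence Cohen--Macaulay of dimension $2n+2$, i.e. of codimension two in $X$, so $\II_{Z_{[n,n+1]}^n}$ again has a length-two locally free resolution. Second, $Z_{[n,n+1]}^n\cong S^{[1,n,n+1]}$ is irreducible: this is part of Proposition~\ref{prop:Zn,n+1IsReducible}; alternatively, it is connected because it maps properly and surjectively onto the irreducible variety $S^{[1,n]}$ with connected fibres (the fibre over $(p,\xi_n)$ being $\PP(\II_{\xi_n})$ on $S$), hence equidimensional since it is Cohen--Macaulay, and the open locus where $\xi_{n+1}$ is reduced is irreducible and dense as its complement has dimension at most $2n+1$. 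Third, I would bound the loci $V_i\subseteq X$ on which $\II_{Z_{[n,n+1]}^n}$ needs $i$ generators. Since $Z_{[n,n+1]}^n$ is the scheme-theoretic preimage of $Z_{[n]}$ under the forgetful map $g=(\mathrm{id},\pi_n): S\times S^{[n,n+1]}\to S\times S^{[n]}$, the ideal $\II_{Z_{[n,n+1]}^n}$ is generated by pullbacks of generators of $\II_{Z_{[n]}}$, so at $(p,\xi_n,\xi_{n+1})$ it needs at most as many generators as $\II_{\xi_n}$ does at $p$; hence $V_i\subseteq g^{-1}(W_{i,[n]}\cup W_{i+1,[n]}\cup\cdots)$. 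Stratifying by the loci $W'_{m,[n]}$ with $m\geq i$, over which $g$ has fibres of the constant dimension found in Lemma~\ref{lem: constant fiber dimension} ($2$ for $m\leq 2$, and $m-1$ for $m\geq 3$), and inserting the codimension bounds for $W_{j,[n]}\cap W'_{m,[n]}$ from Lemma~\ref{lem: intersection dim}, a short computation yields $\mathrm{codim}_X V_i\geq i$ for all $i\geq 2$ and $\mathrm{codim}_X V_i\geq i+1$ for all $i\geq 3$. Proposition~\ref{prop:generalIrreducibilityCriterion}(1) and (2) then give everything claimed.

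The hard part is the third item of part (2). Because the fibres of $g$ over $W_{j,[n]}$ do not have constant dimension, a direct bound on $\dim g^{-1}(W_{j,[n]})$ is too crude; one must pass through the auxiliary loci $W'_{m,[n]}$, and one must check carefully that for $i=2$ the only codimension-two contribution to $V_2$ is the generic (local complete intersection) locus of $Z_{[n,n+1]}^n$ lying over $W_{2,[n]}\cap W'_{2,[n]}$, with all other strata of codimension at least three. This is precisely why the sharpened emptiness and dimension estimates of Lemmas~\ref{lem: 1 step emptiness}--\ref{lem: 2 step dim} — rather than the coarser bounds used for $S^{[n,n+1]}$ in \cite{ES98} — are needed. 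Everything else, namely part (1) and the reduction of part (2) to these estimates, is formal.
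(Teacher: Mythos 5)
Your proposal is correct and follows essentially the same route as the paper: both parts are deduced from Proposition \ref{prop:generalIrreducibilityCriterion} using Lemmas \ref{lem: codim two} and \ref{lem: 2 step dim} for part (1), and the forgetful map $(\mathrm{id},\pi_n)$ together with Lemma \ref{lemma:betterWiBound} for part (2). Your stratification by the loci $W'_{m,[n]}$ in part (2) is in fact slightly more careful than the paper's own argument, which simply asserts that the fibre over a point of $W_{i,[n]}$ has dimension $i-1$ (the fibre can be larger, namely $\max(2,\,m-1)$ where $m$ is the maximal number of generators over the whole support), but both versions yield the same codimension bounds.
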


\begin{proof}
    For both parts, we use Proposition \ref{prop:generalIrreducibilityCriterion}. First, we prove part (a). By Lemma \ref{lem: codim two}, $Z_{[n+1,n+2]}$ is codimension two in $S \times S^{[n,n+1]}$ and $\mathcal{O}_{Z_{[n,n+1]}}$ has local projective dimension two over $\mathcal{O}_{S \times S^{[n,n+1]}}$. By Lemma \ref{lem: 2 step dim}, $W_{i,[n,n+1]}$ is codimension at least $i$ in $S\times S^{[n,n+1]}$, so we can immediately apply Lemma 3.2 of \cite{ES98} to conclude the first statement. 

    For part (b), let $W_{i,[n,n+1]}^n \subseteq S \times S^{[n,n+1]}$ be the locus of triples $(p,\xi_n,\xi_{n+1})$ such that $p \in \xi_n$ and $\II_{\xi_n}$ requires $i$ generators at $p$. It suffices to show that this locus has codimension at least $i+1$. Consider the map $f = \mathrm{id} \times \phi_{n+1} : S \times S^{[n,n+1]} \to S \times S^{[n]}$. Note that $W_{i,[n,n+1]}^n = f^{-1}(W_{i,[n]})$. By Lemma \ref{lemma:betterWiBound}, we have $\dim(W_{i,[n]}) \leq 2n+1 - {i \choose 2}$. The fiber of $f$ over a point in $W_{i,[n]}$ has dimension $i-1$. Thus,
    \begin{align*}
        \dim(W_{i,[n,n+1]}^n) \leq 2n - {i \choose 2} + i.
    \end{align*}
    Equivalently,
    \begin{align*}
        \mathrm{codim}_{S\times S^{[n,n+1]}}(W_{i,[n,n+1]}^n) \leq {i \choose 2} - i + 4
    \end{align*}
    which is at least $i+1$ for $i \geq 3$.
\end{proof}

This result has an easily corollary which recovers a result of \cite{GH04}, which was generalized by \cite{GG18}.

\begin{cor}\label{cor:[n,n+2] irreducible}\cite{GH04}
$S^{[n,n+2]}$ is irreducible of dimension $2n+4$.
\end{cor}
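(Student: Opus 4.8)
The plan is to deduce the irreducibility of $S^{[n,n+2]}$ from the irreducibility of $S^{[n,n+1,n+2]}$ (Theorem \ref{thm:TwoStepIrreducible}(a)) via a surjection. Concretely, there is a natural forgetful morphism
\[
    \phi_{n+1} : S^{[n,n+1,n+2]} \to S^{[n,n+2]}, \qquad (\xi_n \subseteq \xi_{n+1} \subseteq \xi_{n+2}) \mapsto (\xi_n \subseteq \xi_{n+2}),
\]
and by the Proposition of Bulois--Evain quoted in Section \ref{sec: background} (any length-$n$ subscheme inside a length-$n+2$ subscheme can be completed to a full flag), this map is surjective. Since the continuous image of an irreducible space is irreducible, and $S^{[n,n+1,n+2]}$ is irreducible of dimension $2n+4$ by Theorem \ref{thm:TwoStepIrreducible}, it follows that $S^{[n,n+2]}$ is irreducible.

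It then remains to pin down the dimension. The dimension of $S^{[n,n+2]}$ is at most $2n+4$ because it is dominated by the $(2n+4)$-dimensional irreducible variety $S^{[n,n+1,n+2]}$. For the reverse inequality, I would exhibit a component (equivalently, by irreducibility, a dense open locus) of the expected dimension: the locus where $\xi_{n+2}$ is a reduced union of $n+2$ distinct points and $\xi_n$ is obtained by deleting two of them is open, and parametrizing it amounts to choosing $n+2$ distinct points of $S$ (dimension $2(n+2) = 2n+4$) together with a choice of which two to omit (a finite datum). Hence this locus has dimension $2n+4$, so $\dim S^{[n,n+2]} = 2n+4$.

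The main obstacle, such as it is, is purely bookkeeping: verifying that $\phi_{n+1}$ is a genuine morphism of schemes (not merely a set-theoretic map) and that it is surjective in the scheme-theoretic sense needed. Both are handled by the functorial description of nested Hilbert schemes recalled in Section \ref{sec: background} together with the cited result \cite[Proposition 3.14]{BuloisEvain}; the surjectivity at the level of the proposition on natural maps $S^{[n_1,\dots,n_t]} \to S^{[m_1,\dots,m_s]}$ is exactly the statement $\{n,n+2\} \subseteq \{n,n+1,n+2\}$. No delicate estimates are required — all the hard work sits in Theorem \ref{thm:TwoStepIrreducible}. One could alternatively bypass the dimension count of $S^{[n,n+1,n+2]}$ and argue directly on the open dense reduced locus, but routing through the already-established irreducibility of the complete nesting is the cleanest path.
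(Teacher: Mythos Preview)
Your argument is correct and matches the paper's proof: both use the surjective forgetful map $\phi_{n+1}:S^{[n,n+1,n+2]}\to S^{[n,n+2]}$ to transport irreducibility from Theorem \ref{thm:TwoStepIrreducible}. The paper phrases the dimension statement by noting that $\phi_{n+1}$ is generically finite, which is exactly your observation that over the reduced locus the fiber is a finite choice.
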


\begin{proof}
The forgetful map $\phi_{n+1}: S^{[n,n+1,n+2]} \to S^{[n,n+2]}$ is surjective and generically finite so the result is immediate from the theorem.
\end{proof}

In order to prove the irreducibility of $S^{[1,n,n+1,n+2]}$, we need a lemma describing the universal family.

\begin{prop}\label{prop:Zn,n+1,n+2IsReducible}
	The subscheme $Z_{[n,n+1,n+2]}$ is covered by three loci, two of which are isomorphic to $S^{[n,n+1,n+2]}$ and one of which is isomorphic to $S^{[1,n,n+1,n+2]}$.
\end{prop}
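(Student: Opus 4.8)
The plan is to stratify $Z_{[n,n+1,n+2]}$ according to where the marked point $p$ sits relative to the nesting $\xi_n \subseteq \xi_{n+1}\subseteq\xi_{n+2}$. Write $r_1 := \res_{[n,n+1]}$ and $r_2 := \res_{[n+1,n+2]}$ for the two residual-point morphisms $S^{[n,n+1,n+2]}\to S^{(1)}=S$. The starting observation is that, because the nesting is complete, comparing colengths point by point gives
\[
    \Supp(\xi_{n+2}) = \Supp(\xi_n)\cup\{r_1\}\cup\{r_2\}:
\]
passing from $\xi_n$ to $\xi_{n+1}$ and from $\xi_{n+1}$ to $\xi_{n+2}$ each raises the colength by exactly one, at the single points $r_1$ and $r_2$ respectively, so any point of $\Supp(\xi_{n+2})$ not already in $\Supp(\xi_n)$ must be $r_1$ or $r_2$. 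Hence $Z_{[n,n+1,n+2]}$ is the union of the three loci
\[
    A = \{(p,\xi_n,\xi_{n+1},\xi_{n+2})\in Z_{[n,n+1,n+2]} : p\in\Supp(\xi_n)\},\qquad B = \{p = r_1\},\qquad C = \{p = r_2\},
\]
each of which is a closed subscheme of $S\times S^{[n,n+1,n+2]}$ contained in $Z_{[n,n+1,n+2]}$ (for $B$ and $C$ one uses $r_1\in\Supp(\xi_{n+1})\subseteq\Supp(\xi_{n+2})$ and $r_2\in\Supp(\xi_{n+2})$).

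Next I would identify $B$ and $C$. Each is the graph, inside $S\times S^{[n,n+1,n+2]}$, of a morphism $S^{[n,n+1,n+2]}\to S$ — namely $r_1$ for $B$ and $r_2$ for $C$. Since $S$ is separated, these graphs are closed subschemes, and in each case the projection to $S^{[n,n+1,n+2]}$ restricts to an isomorphism. Thus $B\cong C\cong S^{[n,n+1,n+2]}$.

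For $A$, I would invoke the standard identification of the universal family $Z_{[n]}$ with $S^{[1,n]}$, together with the functor-of-points description of nested Hilbert schemes, to obtain
\[
    S^{[1,n,n+1,n+2]}\ \cong\ S^{[1,n]}\times_{S^{[n]}} S^{[n,n+1,n+2]},
\]
since a flat nested family $\Xi_1\subseteq\Xi_n\subseteq\Xi_{n+1}\subseteq\Xi_{n+2}$ over a base $X$ is exactly the datum of a flat family $\Xi_1\subseteq\Xi_n$ together with a flat family $\Xi_n\subseteq\Xi_{n+1}\subseteq\Xi_{n+2}$ sharing the term $\Xi_n$. Under the canonical identification $S\times S^{[n,n+1,n+2]} = (S\times S^{[n]})\times_{S^{[n]}} S^{[n,n+1,n+2]}$, the pullback of the closed subscheme $Z_{[n]}\subseteq S\times S^{[n]}$ is precisely $A$, so $A\cong Z_{[n]}\times_{S^{[n]}} S^{[n,n+1,n+2]}\cong S^{[1,n,n+1,n+2]}$. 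This exhibits the three advertised loci and completes the proof.

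I do not expect a genuine obstacle: the argument is essentially an unwinding of definitions. The only point that needs real care is upgrading the set-level description of $A$ to a scheme-theoretic isomorphism, which is exactly what the fiber-product/functorial step above supplies; the colength count and the graph arguments for $B$ and $C$ are routine. (Note also that the proposition only asserts a cover, not a decomposition into irreducible components, so no irreducibility of $A\cong S^{[1,n,n+1,n+2]}$ is needed here.)
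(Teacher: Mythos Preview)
Your proof is correct and follows essentially the same approach as the paper: both stratify $Z_{[n,n+1,n+2]}$ by whether the marked point lies in $\xi_n$ or coincides with one of the two residual points, identify the latter two loci with $S^{[n,n+1,n+2]}$ via the graphs of $\res_{[n,n+1]}$ and $\res_{[n+1,n+2]}$, and identify the first with $S^{[1,n,n+1,n+2]}$. Your fiber-product justification for the isomorphism $A\cong S^{[1,n,n+1,n+2]}$ is a bit more explicit than the paper's, which simply asserts that the obvious map is an isomorphism onto its image, but the underlying idea is the same.
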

\begin{proof}
	There is an obvious map $S^{[1,n,n+1,n+2]} \to Z_{[n,n+1,n+2]}$ whose image is the collection of tuples $(p,\xi_n,\xi_{n+1},\xi_{n+2})$ with $p \in \xi_n$. This map is an isomorphism onto its image, which we denote $Z^n_{[n,n+1,n+2]}$. 
	There are other natural maps $S^{[n,n+1,n+2]} \to Z_{[n,n+1,n+2]}$ given by $(\xi_n,\xi_{n+1},\xi_{n+1}) \mapsto (\res(\xi_n,\xi_{n+1}),\xi_n,\xi_{n+1},\xi_{n+2})$ and by $(\xi_n,\xi_{n+1},\xi_{n+1}) \mapsto (\res(\xi_{n+1},\xi_{n+2}),\xi_n,\xi_{n+1},\xi_{n+2})$.
	We denote the image of these maps by $Z^{n+1}_{[n,n+1,n+2]}$ and $Z^{n+2}_{[n,n+1,n+2]}$, respectively. These maps are isomorphisms onto an irreducible component of $Z_{[n,n+1,n+2]}$. To conclude, notice that every element of $Z_{[n,n+1,n+2]}$ is in the image of one of these maps as a point $p$ being contained in the length $n+2$ scheme is either contained in the length $n$ scheme or is one of the residual points.
\end{proof}

\begin{prop}\label{prop: [1,n,n+1,n+2] irred}
$S^{[1,n,n+1,n+2]}$ is irreducible of dimension $2n+4$.
\end{prop}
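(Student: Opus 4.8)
The plan is to realize $S^{[1,n,n+1,n+2]}$ as a projectivization over an already-irreducible base and to invoke Proposition \ref{prop:generalIrreducibilityCriterion} once more, exactly as in the proof of Theorem \ref{thm:TwoStepIrreducible}. The natural forgetful map here is $\phi_{n+2} : S^{[1,n,n+1,n+2]} \to S^{[1,n,n+1]}$, which forgets the length $n+2$ scheme. By the same elementary-transformation/representability formalism reviewed in Section \ref{sec: background} (the discussion showing $S^{[n,n+1,n+2]} = \PP(\II_{Z_{[n,n+1]}})$), a point of $S^{[1,n,n+1,n+2]}$ is the data of $(p,\xi_n,\xi_{n+1}) \in S^{[1,n,n+1]}$ together with a length-one quotient of $\II_{\xi_{n+1}}$, so $S^{[1,n,n+1,n+2]} = \PP(\II_{Z})$ where $Z \subseteq S^{[1,n,n+1]} \times S$ is the universal family $\{(p,\xi_n,\xi_{n+1},q) : q \in \Supp(\xi_{n+1})\}$. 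The base $S^{[1,n,n+1]}$ is irreducible: it is $Z^n_{[n,n+1]}$, shown irreducible in Theorem \ref{thm:TwoStepIrreducible}(b) (and smooth, since Cheah/Tikhomirov give smoothness of $S^{[n,n+1]}$ and $S^{[1,m]}$ is a tower of $\PP^1$-bundle-like constructions — in any case Theorem \ref{thm:TwoStepIrreducible}(b) already gives what is needed). So the setup hypotheses of Proposition \ref{prop:generalIrreducibilityCriterion} are in place provided one checks the codimension-two and local-projective-dimension-two conditions and the generator-locus bounds.

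The codimension-two and projective-dimension-two conditions for the universal family $Z$ over $S^{[1,n,n+1]}$ follow verbatim from the argument of Lemma \ref{lem: codim two}: the projection $Z \to S^{[1,n,n+1]}$ is finite and flat onto an irreducible (hence, after noting smoothness, Cohen--Macaulay) base, so $Z$ is Cohen--Macaulay of codimension two, and Auslander--Buchsbaum gives local projective dimension two. The remaining task is to bound the loci $W^{[1,n,n+1]}_{i} \subseteq S^{[1,n,n+1]} \times S$ where $\II_Z$ needs $i$ generators — equivalently where the length-$(n+1)$ ideal $\II_{\xi_{n+1}}$ needs $i$ generators at the relevant point of its support — and show $\codim W^{[1,n,n+1]}_i \geq i$. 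For this I would reuse the dimension estimates already proved. The generator count of $\II_{\xi_{n+1}}$ at a point depends only on $(q,\xi_{n+1})$, so $W^{[1,n,n+1]}_i$ is the preimage of $W_{i,[n+1]} \subseteq S^{[n+1]} \times S$ under the composite $S^{[1,n,n+1]} \times S \to S^{[n+1]} \times S$; one then stratifies by the $W'_{j,[n+1]}$ and combines Lemma \ref{lemma:betterWiBound}, Lemma \ref{lem: intersection dim}, and Lemma \ref{lem: constant fiber dimension} exactly as in Lemma \ref{lem: 2 step dim} and in the proof of Theorem \ref{thm:TwoStepIrreducible}(b), keeping careful track of the extra fiber directions contributed by the ``$[1,n]$'' part of the nesting (the choice of $p \in \xi_n$ and of the step $\xi_n \subseteq \xi_{n+1}$, which contribute only finitely many or at most two-dimensional worth of fibers). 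The punchline will again be that $W^{[1,n,n+1]}_2$ is codimension exactly $2$ (open in the universal family), $W^{[1,n,n+1]}_3$ is codimension at least $3$ by the same ``open subset of the universal family is not in $W_3$'' trick, and for $i \geq 4$ the binomial growth $\binom{i-2}{2}+3 \geq i$ does the rest. Then Proposition \ref{prop:generalIrreducibilityCriterion}(a) yields that $\PP(\II_Z) = S^{[1,n,n+1,n+2]}$ is irreducible and is the blowup of $S^{[1,n,n+1]} \times S$ along $Z$; its dimension is $\dim S^{[1,n,n+1]} = 2n+4$ since the blowup does not change dimension (alternatively, $\phi_{n+2}$ is generically finite onto the $(2n+4)$-dimensional $S^{[1,n,n+1]}$... wait, that map goes the wrong way — rather $\phi_{n+2}$ has generic fiber a point and the blowup structure fixes the dimension at $2n+4$).

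The main obstacle I anticipate is purely bookkeeping: verifying that the extra ``$[1,n,n+1]$'' fiber directions over each stratum $W_{i,[n+1]}$ or $W_{i,[n+1]} \cap W'_{j,[n+1]}$ do not erode the codimension below $i$. Concretely, over a length-$(n+1)$ scheme $\xi_{n+1}$ one must count the flags $p \in \xi_n \subseteq \xi_{n+1}$, and the number of such $\xi_n$ (length-$n$ subschemes) can jump along the bad loci; one needs that this jump, together with the choice of $p$, is dominated by the codimension gain $\binom{i}{2}+1$ (or $\binom{j}{2}+\binom{i}{2}$) coming from Lemmas \ref{lemma:betterWiBound} and \ref{lem: intersection dim}. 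Since a point $p$ contributes at most $2$ dimensions and the set of length-$n$ subschemes of a fixed $\xi_{n+1}$ is finite, the relevant correction is a bounded constant, and the binomial estimates have slack to absorb it for all $i \geq 3$; the $i = 2,3$ cases are handled, as above, by the openness observations rather than by inequalities. A clean alternative, which I would mention if the direct bound gets delicate, is to instead project $S^{[1,n,n+1,n+2]} \to S^{[1,n,n+2]}$ or to use $\phi_1$ together with the already-established irreducibility of the fibers (each fiber of $\phi_1 : S^{[1,n,n+1,n+2]} \to S^{[n,n+1,n+2]}$ over $(\xi_n,\xi_{n+1},\xi_{n+2})$ being $\Supp(\xi_{n+2})$, which is connected), though that route requires controlling the dimension of the locus where the fiber jumps and is less self-contained than the $\PP(\II_Z)$ approach.
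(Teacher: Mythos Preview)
Your approach has a genuine gap. You need $S^{[1,n,n+1]}\times S$ to be smooth in order to run the Auslander--Buchsbaum argument and obtain the length-two locally free resolution of $\II_Z$ that Proposition~\ref{prop:generalIrreducibilityCriterion} requires. But $S^{[1,n,n+1]}$ is \emph{not} smooth: indeed Corollary~\ref{cor: S[1,n,n+1]Not Q Gorenstein} shows it is not even $\QQ$-Gorenstein (and already $S^{[1,n]}$, the universal family, is singular for $n\geq 3$). So neither ``$Z$ Cohen--Macaulay'' nor the length-two resolution follows from Lemma~\ref{lem: codim two} verbatim, and Proposition~\ref{prop:generalIrreducibilityCriterion} is not available over this base. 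There is also a misreading: Theorem~\ref{thm:TwoStepIrreducible}(b) asserts the irreducibility of $\PP(\II_{Z^n_{[n,n+1]}})$ and of its exceptional divisor, not of $Z^n_{[n,n+1]}\cong S^{[1,n,n+1]}$ itself; irreducibility of $S^{[1,n,n+1]}$ is only obtained later, in Corollary~\ref{cor: [1,n+1,n+2],[1,n,n+2] irred}, as a consequence of the very proposition you are trying to prove. Finally, your alternative via $\phi_1$ misidentifies the fiber: over $(\xi_n,\xi_{n+1},\xi_{n+2})$ it is $\Supp(\xi_n)$, a finite set of up to $n$ points, which is certainly not connected in general.

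The paper's argument avoids the singular base entirely. It views $S^{[1,n,n+1,n+2]}$ as the component $Z^n_{[n,n+1,n+2]}$ of the universal family $Z_{[n,n+1,n+2]}$, which is finite and flat over the already irreducible $S^{[n,n+1,n+2]}$; this forces $Z_{[n,n+1,n+2]}$ to be equidimensional of dimension $2n+4$. The other two pieces $Z^{n+1},Z^{n+2}$ are copies of $S^{[n,n+1,n+2]}$ and hence irreducible, so $S^{[1,n,n+1,n+2]}$ is equidimensional of the correct dimension and every component dominates $S^{[n,n+1,n+2]}$ under $\phi_1$. One then finishes over the dense locus of reduced (hence curvilinear) $\xi_{n+2}$, where the nested curvilinear locus is a single irreducible set. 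The point is that the finite flat map goes \emph{down} to a space whose irreducibility is already established, rather than trying to build up from a singular $S^{[1,n,n+1]}$.
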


\begin{proof}
The projection map $Z_{[n,n+1,n+2]} \to S^{[n,n+1,n+2]}$ is the flat family induced by the map $\phi_n: S^{[n,n+1,n+2]} \to S^{[n+1,n+2]}$. Since $Z_{[n,n+1,n+2]}$ admits a finite, flat map to a smooth variety, it is Cohen-Macaulay \cite[Corollary to Theorem 23.3]{Matsumura} and therefore equidimensional (as it is Cohen-Macaulay and connected). 
However, by the previous lemma, it is reducible as it is the union of $Z_{[n,n+1,n+2]}^n$, $Z_{[n,n+1,n+2]}^{n+1}$, and $Z_{[n,n+1,n+2]}^{n+2}$. 
Since we know the complement of $Z_{[n,n+1,n+2]}^n$ inside of $Z_{[n,n+1,n+2]}$ is two irreducible components, both of dimension of $2n+4$, $S^{[1,n,n+1,n+2]}$ is equidimensional of dimension $2n+4$.

We have the forgetful map $\phi_{1}: S^{[1,n,n+1,n+2]} \to S^{[n,n+1,n+2]}$ under which every component of $S^{[1,n,n+1,n+2]}$ dominates $S^{[n,n+1,n+2]}$. 
If $S^{[1,n,n+1,n+2]}$ were reducible, this would guarantee the existence of a point $(\xi_n,\xi_{n+1},\xi_{n+2}) \in S^{[n,n+1,n+2]}$ with preimages $(p,\xi_n,\xi_{n+1}, \xi_{n+2})$ and $(q,\xi_n,\xi_{n+1}, \xi_{n+2})$ that lie in different components of $S^{[1,n,n+1,n+2]}$.
Since the reduced schemes are dense in $S^{[n,n+1,n+2]}$, we can take $\xi_{n+2}$ to be reduced. 
However, we know reduced schemes are curvilinear, and there is a unique curvilinear component so $S^{[1,n,n+1,n+2]}$ is irreducible.
\end{proof}


\begin{cor}\label{cor: [1,n+1,n+2],[1,n,n+2] irred}
$S^{[1,n+1,n+2]}$ and $S^{[1,n,n+2]}$ are irreducible and dimension $2n+4$.
\end{cor}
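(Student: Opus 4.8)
The plan is to obtain both statements by pushing forward the irreducibility of $S^{[1,n,n+1,n+2]}$ (Proposition~\ref{prop: [1,n,n+1,n+2] irred}) along forgetful maps. First I would note that the forgetful morphisms
\[
\phi_n \colon S^{[1,n,n+1,n+2]} \to S^{[1,n+1,n+2]}, \qquad \phi_{n+1}\colon S^{[1,n,n+1,n+2]} \to S^{[1,n,n+2]}
\]
are surjective: a nesting $\xi_1\subseteq\xi_{n+1}\subseteq\xi_{n+2}$ (respectively $\xi_1\subseteq\xi_n\subseteq\xi_{n+2}$) can always be refined to a full flag $\xi_1\subseteq\xi_n\subseteq\xi_{n+1}\subseteq\xi_{n+2}$, which is exactly the surjectivity of the natural maps between nested Hilbert schemes with nested index sets established in Section~\ref{sec: background}. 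Since the image of an irreducible space under a morphism is irreducible, and $\phi_n,\phi_{n+1}$ are surjective, it follows that $S^{[1,n+1,n+2]}$ and $S^{[1,n,n+2]}$ are both irreducible.

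For the dimension I would use the second family of forgetful maps
\[
\psi\colon S^{[1,n+1,n+2]} \to S^{[n+1,n+2]}, \qquad \psi'\colon S^{[1,n,n+2]} \to S^{[n,n+2]}
\]
that drop the length-one subscheme. These are surjective by the same principle, and they are quasi-finite: the fiber of $\psi$ over a pair $(\xi_{n+1},\xi_{n+2})$ consists of the length-one subschemes $\xi_1\subseteq\xi_{n+1}$, i.e.\ the finite set $\mathrm{Supp}(\xi_{n+1})$, and similarly for $\psi'$. As $\psi$ and $\psi'$ are projective and quasi-finite, they are finite, and hence preserve dimension. Now $S^{[n+1,n+2]}$ is smooth of dimension $2(n+1)+2 = 2n+4$, and $S^{[n,n+2]}$ has dimension $2n+4$ by Corollary~\ref{cor:[n,n+2] irreducible}; therefore $\dim S^{[1,n+1,n+2]} = \dim S^{[1,n,n+2]} = 2n+4$. (As a consistency check, the surjections $\phi_n,\phi_{n+1}$ from the $(2n+4)$-dimensional space $S^{[1,n,n+1,n+2]}$ already force these dimensions to be at most $2n+4$.)

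There is no real obstacle here: the corollary is a formal consequence of Proposition~\ref{prop: [1,n,n+1,n+2] irred} together with elementary facts about the forgetful maps. The only points that need a little care are verifying that the forgetful maps are genuinely surjective — this is precisely where the refinement-to-a-full-flag statement from Section~\ref{sec: background} enters — and checking that the fibers of $\psi,\psi'$ are finite so that dimension is preserved. If one prefers to avoid the finiteness discussion, an alternative route to the dimension is to observe that, once irreducibility is known, the open locus of nestings with $\xi_{n+2}$ reduced is dense, and counting $n+2$ distinct points together with the finitely many choices of intermediate subscheme and of the distinguished reduced point shows directly that this locus, and hence the whole space, has dimension $2n+4$.
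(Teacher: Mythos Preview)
Your argument is correct and follows essentially the same approach as the paper: both deduce irreducibility of $S^{[1,n+1,n+2]}$ and $S^{[1,n,n+2]}$ by pushing forward the irreducibility of $S^{[1,n,n+1,n+2]}$ along the forgetful maps $\phi_n$ and $\phi_{n+1}$. The only minor difference is in how the dimension is read off: the paper simply notes that these same forgetful maps are generically finite (over a reduced $\xi_{n+2}$ the fibers are finite sets), so surjectivity plus generic finiteness gives $\dim = 2n+4$ in one stroke, whereas you compute the dimension via the separate finite projections to $S^{[n+1,n+2]}$ and $S^{[n,n+2]}$. Both routes are equally valid and equally elementary.
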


\begin{proof}
The forgetful maps $\phi_{n+2}: S^{[1,n,n+1,n+2]} \to S^{[1,n,n+1]}$ and $\phi_{n+1}: S^{[1,n,n+1,n+2]} \to S^{[1,n,n+2]}$ are surjective and generically finite so the result is immediate from the proposition.
\end{proof}

\subsection{Reducible nested Hilbert schemes}\label{subsec:Reducible}

\begin{theorem}\label{thm: reducible}
$S^{[1,2,\dots,22,23]}$ is reducible and has a component of dimension at least 48.
\end{theorem}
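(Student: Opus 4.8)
# Proof Proposal for Theorem \ref{thm: reducible}

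The plan is to exhibit two irreducible families of complete nestings $\xi_1 \subset \xi_2 \subset \cdots \subset \xi_{23}$ whose dimensions we can compute or bound, and show that neither is contained in the closure of the other because they are not equidimensional (or, alternatively, because a generic member of each is not a limit of members of the other). The first family is the \emph{curvilinear} component: take $\xi_{23}$ to be a length-$23$ curvilinear scheme (locally contained in a smooth curve), which forces the entire flag $\xi_1 \subset \cdots \subset \xi_{23}$ to be uniquely determined once we fix the ordered sequence of points; this family has dimension $2\cdot 23 = 46$, matching the ambient expectation, since reduced schemes are dense in it. The second family is built to be \emph{too big}: I would concentrate all $23$ points at a single (moving) point of $S$ and count dimensions of the punctual nested Hilbert scheme of a fat point.

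The heart of the argument is the dimension count for the punctual family. Fix $p \in S$ and consider nestings $\xi_1 \subset \cdots \subset \xi_{23}$ all supported at $p$, so that every $\II_{\xi_i}$ is an ideal in the local ring $\widehat{\OO}_{S,p} \cong \CC[[x,y]]$ of colength $i$. A flag of such ideals corresponds to a flag of length-$i$ subschemes; the relevant object is the punctual nested Hilbert scheme $\mathbb{A}^{2[1,2,\dots,23]}_0$, and by the results of Boos--Bulois \cite{BB} cited in the introduction (which show $\mathbb{A}^{2[n,n+k]}_0$ has components of larger than expected dimension for $k > 5$), the punctual complete nested Hilbert scheme at $0$ through length $23$ has dimension strictly exceeding $22$ — this is precisely the regime that forces reducibility. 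More concretely, I would pick a specific fat point, e.g. $\xi_{23}$ defined by an ideal contained in $\mathfrak{m}^k$ for suitable $k$ (so $\binom{k+1}{2} \leq 23$), parametrize the flags refining it, and show that the locus of such $(\xi_1,\dots,\xi_{23})$ together with the choice of $p \in S$ has dimension at least $48$ — two more than the curvilinear $46$. This gives the stated bound "a component of dimension at least $48$" and simultaneously shows reducibility, since the curvilinear component has dimension exactly $46$ and so cannot contain this family.

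The key steps in order: (1) establish that the curvilinear locus is irreducible of dimension $46$, using that a curvilinear scheme of length $23$ has a unique refining flag and that the map to $S^{[23]}$ onto the curvilinear locus is birational; (2) produce the explicit punctual family and compute its dimension, adding $2$ for the moving support point $p$; (3) compare: since $48 > 46$ and every component of a nested Hilbert scheme containing reduced schemes must be curvilinear (hence the closure of the curvilinear locus is the unique component of dimension $46$ meeting the reduced locus), the punctual family is not contained in the curvilinear component, so $S^{[1,2,\dots,23]}$ is reducible; (4) conclude that the component containing the punctual family has dimension at least $48$.

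The main obstacle I anticipate is step (2): getting a clean, rigorous lower bound of $48$ (equivalently, $46$ for the punctual part alone) on the dimension of the punctual nested Hilbert scheme of a length-$23$ fat point. One must either invoke the Boos--Bulois computation \cite{BB} as a black box with the correct numerology (checking that $k=23$ with step sizes all $1$ falls into their "larger than expected" range), or build an explicit $46$-dimensional family of flags of ideals inside a fixed large fat point by hand — for instance, refining an ideal of the form $(x^a, x^{a-1}y, \dots, y^b)$ through a generic sequence of one-dimensional quotients and counting the moduli of such choices. The bookkeeping to ensure the count reaches exactly the threshold, and that the resulting family genuinely lies in a single irreducible component, is where the care is needed; everything else is soft.
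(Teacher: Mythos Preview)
Your proposal is correct and follows essentially the same route as the paper: compare the curvilinear component (dimension $46$) against a punctual family supported at a moving point of $S$, and show the latter has dimension at least $48$. The paper carries out your step~(2) by the second of your two suggested methods---an explicit $46$-parameter family of nested ideals $I_1 \supset I_2 \supset \cdots \supset I_{23}$ in $\CC[x,y]$ passing through the powers $\mathfrak{m}, \mathfrak{m}^2, \ldots, \mathfrak{m}^6$ of the maximal ideal---rather than by citing \cite{BB}; note that the Boos--Bulois result as stated in the paper concerns two-step punctual nestings $\mathbb{A}_0^{2[n,n+k]}$, so invoking it as a black box for the complete flag would require an additional argument.
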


The crucial step is explicitly constructing a component of the punctual nested Hilbert scheme $(\mathbb{A}^2)_0^{[1,\dots,23]}$ of dimension 46.

\begin{lemma} The punctual nested Hilbert scheme $(\mathbb{A}^2)_0^{[1,2,\dots,22,23]}$ has an irreducible component of dimension at least 46.
\end{lemma}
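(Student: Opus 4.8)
The plan is to exhibit an explicit family of nested chains $\xi_1 \subset \xi_2 \subset \cdots \subset \xi_{23}$ all supported at the origin of $\mathbb{A}^2$, depending on enough parameters that the closure of the family has dimension at least $46$, and then argue that a generic member of the family is \emph{not} a limit of curvilinear chains, so the component it spans is not the curvilinear (principal) component. The natural source of such a family is ideals that are close to $\mathfrak{m}^k$ for suitable $k$: recall that $\mathbb{A}^2_0$ has a length-$\binom{k+1}{2}$ subscheme cut out by $\mathfrak{m}^k$, and fat-point-type ideals near $\mathfrak{m}^k$ have large tangent spaces. Concretely, I would take $k$ so that $\binom{k+1}{2}$ is near $23$ — here $\binom{6}{2}=15$, $\binom{7}{2}=21$, $\binom{8}{2}=28$ — and build the chain by starting from something like $\mathfrak{m}^6$ (colength $21$) and adding two more conditions to reach colength $23$, while simultaneously perturbing the lower ideals $\xi_1,\dots,\xi_{20}$ in a way that uses many moduli.

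The key steps, in order, would be: \textbf{(1)} Fix the top scheme $\xi_{23}$ to be a generic point of a large component of $(\mathbb{A}^2)_0^{[23]}$ — by Iarrobino's work the punctual Hilbert scheme $(\mathbb{A}^2)_0^{[23]}$ is itself reducible with a component of dimension much larger than $22$ (the curvilinear component has dimension $22$), so one already gains a surplus here. \textbf{(2)} Show that once $\xi_{23}$ is fixed in such a component, the chains of subschemes $\xi_1\subset\cdots\subset\xi_{22}\subset\xi_{23}$ below it still form a positive-dimensional family — e.g. by choosing $\xi_{23}$ with a two-dimensional socle or with $\mathfrak{m}^2$-torsion, so that there are many colength-one subideals at each stage, and the choices propagate. \textbf{(3)} Add up the dimension contributions: the dimension of the chosen component of $(\mathbb{A}^2)_0^{[23]}$, plus the dimension of the fiber of $(\mathbb{A}^2)_0^{[1,\dots,23]}\to (\mathbb{A}^2)_0^{[23]}$ over the generic such point, and check the total is $\geq 46$. \textbf{(4)} Conclude reducibility of $(\mathbb{A}^2)_0^{[1,\dots,23]}$ by comparing with the curvilinear component: a curvilinear (hence smoothable-to-distinct-points-along-a-curve) length-$23$ scheme has a $1$-dimensional, principal ideal-type local structure, the chain below it is unique at each step, and the curvilinear locus in the full punctual nested scheme has dimension exactly $22$; since $46 > 22$, the component we built is genuinely new.

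I expect the main obstacle to be \textbf{step (2)–(3)}: getting a \emph{sharp enough} lower bound on the dimension of the space of full flags $\xi_1\subset\cdots\subset\xi_{23}$ refining a fixed generic $\xi_{23}$ in a chosen non-curvilinear component. Counting colength-one subideals of a local Artinian $\mathbb{C}$-algebra at each step amounts to understanding $\dim \mathrm{Hom}(\mathfrak{m}_{\xi_i}/\mathfrak{m}_{\xi_i}^2 \cdot(\text{stuff}), \mathbb{C})$ and how the socle shrinks as one descends; the bookkeeping is delicate because the socle dimension can jump, and one must be careful that the perturbations of $\xi_{23}$ and of the flag below it are genuinely independent directions rather than overlapping in the tangent space. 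A clean way around this is to pick $\xi_{23}$ very explicitly — say, a monomial or nearly-monomial ideal whose Hilbert function is $(1,2,3,4,5,5,3)$ or similar with a fat socle — compute the relevant tangent/hom spaces by hand for that single ideal, and then use semicontinuity to get the bound on an open neighborhood; the arithmetic then reduces to one finite computation that produces the number $46$. The rest (upgrading from the punctual statement to reducibility of $S^{[1,\dots,23]}$ for an arbitrary smooth projective surface $S$, and then to $k>22$) is a standard product-with-the-remaining-points argument comparing dimensions against the known dimension $2\cdot 23 = 46$ of the curvilinear component of $S^{[1,\dots,23]}$ — wait, that coincidence means one actually needs the punctual component to have dimension \emph{strictly} exceeding what the curvilinear locus contributes, i.e. dimension $\geq 46$ punctually already beats the global curvilinear dimension once the point is allowed to move (giving $48$), which is exactly the surplus recorded in Theorem~\ref{thm: reducible}.
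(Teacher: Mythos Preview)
Your step~(1) rests on a false premise: the punctual Hilbert scheme $(\mathbb{A}^2)_0^{[n]}$ is \emph{irreducible} of dimension $n-1$ for every $n$, by Brian\c{c}on's theorem (this is precisely the result invoked as \cite{B77} in the proof of Lemma~\ref{lemma:betterWiBound}). Iarrobino's reducibility results concern $\mathbb{A}^d$ with $d\ge 3$, not the plane. So there is no ``large component'' of $(\mathbb{A}^2)_0^{[23]}$ to exploit; its unique component has dimension $22$, and its generic point is curvilinear, over which the flag fiber is a single point. Your base-plus-fiber decomposition therefore gives $22+0$ generically, and to reach $46$ you would have to locate a special stratum of $\xi_{23}$'s over which the flag fiber has dimension at least $24$ while the stratum itself still contributes enough. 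That is not impossible, but it is exactly the delicate socle bookkeeping you already flag as the main obstacle in steps~(2)--(3); with the shortcut of step~(1) removed, nothing in your outline produces the number $46$.

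The paper bypasses all of this by brute force: it writes down an explicit $46$-parameter family of nested ideals $I_1 \supset I_2 \supset \cdots \supset I_{23}$, each $I_k$ of colength $k$ and squeezed between suitable powers of $\mathfrak{m}$, depending on free scalars $a_1,\dots,a_{46}$ (plus auxiliary $b_j$'s uniquely determined by the containments $I_{k+1}\subset I_k$). There is no component analysis of $(\mathbb{A}^2)_0^{[23]}$, no semicontinuity, and no tangent-space computation --- just one concrete family whose parameter space visibly has dimension $46$, hence so does its closure in $(\mathbb{A}^2)_0^{[1,\dots,23]}$.
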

\begin{proof}
Consider the following ideals:
\begin{align*}
&I_1 = \mathfrak{m},  \\
&I_2 = (x+a_1y)+ \mathfrak{m}^2, \\ 
&I_3 = \mathfrak{m}^2, \\
&I_4 = (xy+a_2y^2,x^2+a_3y^2),\\ 
&I_5 =(x^2+a_4xy+b_1y^2)+ \mathfrak{m}^3,  \\
&I_6 = \mathfrak{m}^3,\\ 
&I_7 = (x^3+a_5y^3,x^2y+a_6y^3,xy^2+a_7y^3)+\mathfrak{m}^4,  \\
&I_8 = (x^3+a_8xy^2+b_2y^3,x^2y+a_9xy^2+b_3y^3)+\mathfrak{m}^4,\\ 
&I_9 = (x^3+a_9x^2y+b_4xy^2+b_5y^3)+\mathfrak{m}^4, \\
&I_{10} = \mathfrak{m}^5,\\
&I_{11} = (x^4+a_{10}y^4,x^3y+a_{11}y^4,x^2y^2+a_{12}y^4,xy^3+a_{13}y^4)+\mathfrak{m}^5, \\
&I_{12} = (x^4+a_{14}xy^3+b_6y^4,x^3y+a_{15}xy^3+b_7y^4,x^2y^2+a_{16}xy^3+b_8y^4)+\mathfrak{m}^5,\\
&I_{13} = (x^4+a_{17}x^2y^2+b_9xy^3+b_{10}y^4,x^3y+a_{18}x^2y^2+b_{11}xy^3+b_{12}y^4)+\mathfrak{m}^5, \\
&I_{14} = (x^4+a_{19}x^3y+b_{13}x^2y^2+b_{14}xy^3+b_{15}y^4,x^3y+a_{20}x^3y+b_{16}x^2y^2+b_{17}xy^3+b_{18}y^4)+\mathfrak{m}^5, \\
&I_{15} = \mathfrak{m}^5,\\ 
&I_{16} = (x^5+a_{21}y^5,x^4y+a_{22}y^5,x^3y^2+a_{23}y^5,x^2y^3+a_{24}y^5,xy^4+a_{25}y^5)+\mathfrak{m}^6,\\
&I_{17} = (x^5+a_{26}xy^4+b_{19}y^5,x^4y+a_{27}xy^4+b_{20}y^5,x^3y^2+a_{28}xy^4+b_{21}y^5,x^2y^3+a_{29}xy^4+b_{22}y^5)+\mathfrak{m}^6, \\
&I_{18} = (x^5+a_{30}x^2y^3+b_{23}xy^4+b_{24}y^5,x^4y+a_{31}x^2y^3+b_{25}xy^4+b_{26}y^5,\\&\;\;\;\;\;\;\;\;\;\;x^3y^2+a_{32}x^2y^3+b_{27}xy^4+b_{28}y^5)+\mathfrak{m}^6,\\
&I_{19} = (x^5+a_{33}x^3y^2+b_{29}x^2y^3+b_{30}xy^4+b_{31}y^5,x^4y+a_{34}x^3y^2+b_{32}x^2y^3+b_{33}xy^4+b_{34}y^5)+\mathfrak{m}^6, \\
&I_{20} = (x^5+a_{35}x^4y+b_{35}x^3y^2+b_{36}x^2y^3+b_{37}xy^4+b_{38}y^5)+\mathfrak{m}^6,\\
&I_{21} = \mathfrak{m}^6, \\
&I_{22} = (x^6+a_{36}y^6,x^5y+a_{37}y^6,x^4y^2+a_{38}y^6,x^3y^3+a_{39}y^6,x^2y^4+a_{40}y^6,xy^5+a_{41}y^6)+\mathfrak{m}^7, \text{ and }\\
&I_{23} = (x^6+a_{42}xy^5+b_{39}y^6,x^5y+a_{43}xy^5+b_{40}y^6,x^4y^2+a_{44}xy^5+b_{41}y^6,x^3y^3+a_{45}xy^5+b_{42}y^6,\\
&\;\;\;\;\;\;\;\;\;\;x^2y^4+a_{46}xy^5+b_{43}y^6)+\mathfrak{m}^7 \\
\end{align*}
where the $a_i$ can be chosen independently and each $b_j$ is uniquely determined by a choice of the $a_k$ and by the relation $I_{i+1} \subset I_i$.
Given that and that each $I_i$ cuts out a scheme of length $i$ supported at the origin, the locus 
\[\{(Z_1,\dots,Z_{23}) : Z_i = \mathbb{V}(I_i)\text{ for some choice of }a_j\}\] is contained in $\mathbb{A}_0^{2[1,\dots,23]}$ and
has dimension 46 so the result follows.
\end{proof}

\begin{proof}[Proof (of Theorem \ref{thm: reducible})]
Since $\mathbb{A}_0^{2[1,2,\dots,22,23]}$ has an irreducible component of dimension at least 46, it is immediate to see that $S^{[1,2,\dots,22,23]}$ has a component of dimension at least 48.
Since the curvilinear component is of dimension 46, the reducibility follows
\end{proof}

\begin{cor}
$S^{[n_{1},\dots,n_k]}$ is reducible for $k>22$ (and has a component of dimension at least $2n_k+2$).
\end{cor}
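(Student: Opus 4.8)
The plan is to deduce the corollary directly from Theorem~\ref{thm: reducible} by embedding the bad nesting $S^{[1,2,\dots,22,23]}$ (or rather its punctual analogue) inside an arbitrary long nesting $S^{[n_1,\dots,n_k]}$ with $k>22$. The key point is that the construction in the Lemma above produces an irreducible component of the punctual scheme $(\mathbb{A}^2)_0^{[1,2,\dots,22,23]}$ whose dimension $46$ strictly exceeds the dimension $46$ of the curvilinear locus --- wait, more precisely exceeds the expected dimension, forcing a separate component. I want to transport this phenomenon up to $(\mathbb{A}^2)_0^{[1,\dots,23]}$-chunks sitting inside a length-$n_k$ nesting.

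First I would reduce to the punctual case at a single point: since reducibility of $S^{[n_1,\dots,n_k]}$ can be detected by exhibiting a subscheme parametrizing nestings all supported at a fixed point $p\in S$ together with a fixed ``ambient'' configuration away from $p$, it suffices to produce a component of the appropriate punctual nested Hilbert scheme of larger-than-curvilinear dimension. Concretely, given $k>22$, choose an index $m$ with $n_m = n_1 + (m-1)$ as large as possible among initial consecutive runs --- actually the cleanest approach is: pick the $23$ largest indices, write $n_k = N$, and build nestings of $S$ where the first $n_k - 23$ steps add reduced points moving in a fixed general position (contributing dimension $2(n_k-23)$ to both the candidate component and to the curvilinear component, so it is a wash), and the last $23$ nested steps $\xi_{n_k-22}\subseteq \cdots \subseteq \xi_{n_k}$ are forced to be supported at one additional point $p$ and to follow the family of ideals $I_1,\dots,I_{23}$ from the Lemma. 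This realizes the $46$-dimensional locus from the Lemma (translated to be supported at $p$, $2$ more dimensions for $p$ varying on $S$) as a locally closed subset of $S^{[n_1,\dots,n_k]}$; its closure has dimension at least $2(n_k-23) + 46 + 2 = 2n_k + 2$. Meanwhile the curvilinear component --- the closure of the locus of $(\xi_{n_1},\dots,\xi_{n_k})$ with $\xi_{n_k}$ reduced --- has dimension $2n_k$, since a reduced length-$n_k$ scheme determines the whole flag (each $\xi_{n_i}$ is a choice of $n_i$ of the $n_k$ points, a finite condition) and $S^{[n_k]}$ has dimension $2n_k$. Since $2n_k + 2 > 2n_k$, the candidate locus is not contained in the curvilinear component, hence $S^{[n_1,\dots,n_k]}$ is reducible, and it has a component of dimension at least $2n_k+2$.

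The step I expect to require the most care is checking that the ``first $n_k-23$ reduced points moving generically'' part genuinely contributes $2(n_k - 23)$ independent dimensions to the candidate locus \emph{and} that these nestings are honest points of $S^{[n_1,\dots,n_k]}$ --- i.e., that the intermediate Hilbert polynomials $n_1,\dots,n_{k-23}$ are compatible with simply accreting reduced points one at a time, and that the last $23$ indices really are consecutive so that the $I_1,\dots,I_{23}$ family applies verbatim. Since $k>22$, the last $23$ indices $n_{k-22}<\cdots<n_k$ need not be consecutive; if they are not, one instead uses the weaker fact (implicit in the proof of Theorem~\ref{thm: reducible}) that \emph{any} nested Hilbert scheme whose index set contains a consecutive run $\{j, j+1,\dots,j+22, j+23\}$ of length $24$ inherits reducibility, by the same generic-points-plus-fixed-punctual-flag argument applied to that run. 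When $k > 22$ the pigeonhole bound does not immediately give a consecutive run of length $24$, so the honest statement to prove is: \emph{there exist} tuples with $k>22$ terms that are reducible, and more usefully the family of tuples $(1,2,\dots,k)$ for $k\geq 23$ is reducible, together with the monotonicity observation that adding points on either end preserves reducibility. I would therefore phrase the proof as: $(1,\dots,23)$ is reducible by Theorem~\ref{thm: reducible}; for any $k>22$ the tuple $(n_{k-22}-22,\dots,n_{k-22},n_{k-22}+1,\dots)$ --- hmm, cleaner still --- any $S^{[n_1,\dots,n_k]}$ surjects onto $S^{[n_{j},n_{j}+1,\dots]}$ only when those are consecutive, so the truly robust argument is the direct dimension count above carried out with the largest consecutive tail of the given tuple, and padding with generic reduced points on both sides. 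The arithmetic that the padded candidate beats the curvilinear dimension by exactly $2$ regardless of the tuple is the routine part; the subtlety is bookkeeping which indices are consecutive and confirming the flag of ideals can be ``inserted'' at those indices, which follows because $I_1\subset I_2\subset\cdots$ in the Lemma has length jumping by exactly one at each step.
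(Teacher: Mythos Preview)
Your overall strategy matches the paper's: transport the $46$-dimensional punctual component of $(\mathbb{A}^2)_0^{[1,\dots,23]}$ into $S^{[n_1,\dots,n_k]}$ by adding general reduced points, then compare its dimension $2n_k+2$ against the $2n_k$-dimensional curvilinear component. Your dimension count for the curvilinear locus is correct, and the target $2n_k+2$ is the right number.

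The gap is in the construction itself. As you correctly flag in your third paragraph, if you take a \emph{fixed} reduced base $R$ of $n_k-23$ points and adjoin the punctual flag $V(I_1)\subset\cdots\subset V(I_{23})$ at a further point $p$, the resulting chain has lengths $n_k-22,n_k-21,\dots,n_k$, not $n_{k-22},\dots,n_k$; so unless the last $23$ indices happen to be consecutive you do not land in $S^{[n_1,\dots,n_k]}$ at all. You then try to rescue this via a consecutive run or a pigeonhole argument, but there is no such run in general (e.g.\ $n_i=2i$), and you end by calling the issue ``bookkeeping'' and citing the length-one jumps of the $I_j$ as the reason it works. In fact those length-one jumps are precisely the obstruction, not the cure.

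The fix, which is what the paper does, is to let the reduced padding vary level by level rather than be a fixed $R$. Concretely, pick general points $q_1,\dots,q_{n_k-23}\in S$ and set
\[
\eta_{n_i}=\xi_i\cup\{q_1,\dots,q_{n_i-i}\}\quad(1\le i\le 23),\qquad
\eta_{n_i}=\xi_{23}\cup\{q_1,\dots,q_{n_i-23}\}\quad(i>23).
\]
Nestedness holds because $n_{i+1}-(i+1)\ge n_i-i$, and the dimension is $48+2(n_k-23)=2n_k+2$ on the nose, for \emph{any} tuple with $k>22$. Once you make this adjustment your argument is complete and coincides with the paper's.
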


\begin{proof}
Since, $S^{[1,\dots,22,23]}$ has a component of dimension at least 48, $S^{[n_1,\dots,n_23,\dots, n_k]}$ has a component of dimension at least $48+2(n_1-1)+\sum_{i=2}^23(n_i-n_{i-1}-1)*2+\sum_{i=24}^k 2(n_i-n_{i-1}) = 2n_k+2$ by taking any tuple of subschemes in the at least 48 dimensional component and extending it by adding general points to each subscheme to achieve the desired $n_i$.
Again, the curvilinear component is dimension $2n$, so the reducibility follows.
\end{proof}

This argument can be improved to $k>21$, if you exclude the case of $S^{[1,\dots,22]}$.

\begin{cor}
$S^{[n_{1},\dots,n_k]}$ is reducible for $k>21$ and $n_1>1$ (and has a component of dimension at least $2n_k+2$).
\end{cor}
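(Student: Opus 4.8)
The plan is to reduce the general case to the single explicit reducible example already established, namely that $S^{[1,2,\dots,22,23]}$ has a component of dimension at least $48$ (greater than $2\cdot 23 + 2 = 48$; more precisely, strictly larger than the curvilinear component). Given a tuple $n_1 < \cdots < n_k$ with $k > 21$ and $n_1 > 1$, I want to exhibit a sub-nesting whose ``shape'' matches $S^{[1,2,\dots,22,23]}$ up to adding generic reduced points, and then transport the large-dimensional component forward by a forgetful map, as in the preceding two corollaries.

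First I would choose, among the $k$ indices, a consecutive block of length $23$ playing the role of $[1,\dots,23]$; since $k > 21$ this is where the hypothesis $n_1 > 1$ buys the extra room — instead of needing $23$ steps starting from index $1$, one can absorb the initial ``slack'' $n_1 - 1 > 0$ into the first subscheme and get away with $22$ steps of the form used in the reducible example, i.e. $k \geq 22$ suffices. Concretely, I would take a nesting $\xi_{n_1} \subseteq \cdots \subseteq \xi_{n_k}$ in which, on $22$ suitably chosen consecutive steps, the subschemes are forced to be supported at a single (moving) point $p \in S$ and to follow the family of ideals $I_2, \dots, I_{23}$ from the Lemma (translated to $p$), scaled up by the extra length $n_1 - 1$ at $p$; on the remaining steps one simply adds generic reduced points away from $p$. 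This locus sits inside $S^{[n_1,\dots,n_k]}$, and its dimension is computed exactly as in the first corollary above: $2$ for the moving point $p$, plus $44$ for the parameters $a_j$ of the punctual family, plus $2(n_1 - 1)$ for the generic extra length at $p$, plus $2(n_i - n_{i-1} - 1)$ for each step outside the special block and $2(n_i - n_{i-1})$ for the others. The bookkeeping collapses (as in the displayed computation in the last corollary) to a component of dimension at least $2n_k + 2$.

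The second step is to argue this gives a genuine extra irreducible component, i.e. that $2 n_k + 2$ exceeds the dimension of the curvilinear (smoothable) locus. The curvilinear component of $S^{[n_1,\dots,n_k]}$ has dimension $2 n_k$ — a generic point there is a reduced length-$n_k$ scheme together with its unique chain of sub-nestings, so the parameter count is that of $S^{[n_k]}$. Since $2 n_k + 2 > 2 n_k$, the locus built in the first step cannot be contained in the curvilinear component, and as every nested Hilbert scheme contains the curvilinear component, $S^{[n_1,\dots,n_k]}$ is reducible.

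The main obstacle is the first step: verifying that the constructed family genuinely has the claimed dimension, which requires checking that the $b_j$ are (generically) uniquely determined by the $a_k$ via the containments $I_{i+1} \subseteq I_i$ — exactly the content of the Lemma — and that translating by $p$ and padding with extra length at $p$ and with generic points elsewhere does not introduce unexpected collisions or drop the dimension. This is essentially a repackaging of the Lemma's computation plus a routine (if slightly tedious) incidence-variety dimension count of the type carried out repeatedly in Section~\ref{sec: irreducibility}, so I expect it to go through without new ideas, the only care needed being in the index bookkeeping that converts the ``$k>22$'' argument to ``$k>21$'' under the extra hypothesis $n_1 > 1$.
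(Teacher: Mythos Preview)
Your plan is viable, but the paper takes a much shorter route. Since $n_1>1$, the tuple $(1,n_1,\dots,n_k)$ is strictly increasing of length $k+1>22$, so the \emph{previous} corollary already furnishes a component of $S^{[1,n_1,\dots,n_k]}$ of dimension at least $2n_k+2$. The forgetful map $\phi_1:S^{[1,n_1,\dots,n_k]}\to S^{[n_1,\dots,n_k]}$ is surjective and finite (its fiber over $(\xi_{n_1},\dots,\xi_{n_k})$ is the finite support of $\xi_{n_1}$), so the image of that component still has dimension at least $2n_k+2>2n_k$, and reducibility follows immediately. No construction needs to be redone.

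Your direct approach---dropping $I_1=\mathfrak{m}$ and using only $I_2,\dots,I_{23}$---encodes the same observation from the other side: dropping $I_1$ costs no parameters, which is exactly dual to the fact that prepending a length-$1$ point to the chain costs no moduli. However, several details in your sketch are off. The punctual family $(I_2,\dots,I_{23})$ still carries all $46$ free parameters $a_1,\dots,a_{46}$, not $44$; the padding needed to bring the first scheme to length $n_1$ is $n_1-2$ generic points \emph{away} from $p$ (adding ``extra length $n_1-1$ at $p$'' would destroy the prescribed ideals $I_j$); and the $-1$ belongs on the steps \emph{inside} the special block, not outside, since those are precisely the steps where one unit of length is already supplied by the punctual chain. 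With these corrections the count telescopes correctly to
\[
2+46+2(n_1-2)+2\bigl((n_{22}-n_1)-21\bigr)+2(n_k-n_{22})=2n_k+2,
\]
as you assert. So your route reaches the destination, but with more bookkeeping and more room for exactly the slips that appear in your sketch; the paper's two-line reduction via $\phi_1$ avoids all of it.
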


\begin{proof}
The nested Hilbert scheme $S^{[1,n_{1},\dots,n_k]}$ is reducible and has a componenet of dimension at least $2n_k+2$ by the previous corollary.
Since the map $\phi_1:S^{[1,n_{1},\dots,n_k]} \to S^{[n_{1},\dots,n_k]}$ is surjective and finite, the result follows.
\end{proof}

\section{The RKS Map and Smoothness of $S^{[n,n+1]}$}\label{sec: smoothness of one step}

In this section, we review some machinery from \cite{Hir} which allows one to establish smoothness of certain blowups. 
We will use these results in the next section to construct our resolution of $S^{[n,n+1,n+2]}$. 
We conclude the section by giving a streamlined version of Tikhomirov's proof that $S^{[n,n+1]}$ is smooth. 

Suppose $Z \subseteq X$ is a codimension two Cohen-Macaulay subscheme of a smooth variety $X$. Then $\II_Z$ admits a resolution
\begin{equation}\label{eq:idealRes}
    0 \to E \fto{u} F \to \II_Z \to 0
\end{equation}
where $E$ and $F$ are vector bundles. The map $u$ gives rise to a section of the projection map $D = \PP (\Hom(E,F)^\vee) \to X$. 
By abuse of notation, we refer to this section as $u$. The variety $D$ is stratified by rank varieties
\[
    D_r = \{(x,\phi_x) : \rank(\phi_x) \leq r\}. 
\]
For some $x \in X$, let $y = u(x) = (x,\langle u^\vee|_x \rangle)$. Let $r$ be the rank of $u|_x$. It is well known (see e.g. \cite{Hir}) that $D_r$ is smooth away from $D_{r-1}$ and the normal bundle at $y$ is given by
\[
    N_{D_r/D}|_y \cong \Hom_k(\ker(u^\vee|_x), \coker(u^\vee|_x)).
\]

\begin{deff}\label{def:rankKodairaSpencerMap}
    The \emph{rank Kodaira-Spencer (RKS) map} is the composition
    \[
        \theta_x : T_xX \fto{du} T_yD \to N_{D_r/D}|_y = \Hom_k(\ker(u^\vee|_x), \coker(u^\vee|_x))
    \]
    where the latter map is the natural quotient map. 
\end{deff}

There is a convenient smoothness criterion for $\PP(\II_Z)$ in terms of the RKS map.

\begin{prop}[{\cite[Proposition 3.5]{Hir}}]\label{prop:SmoothnessCriterion}
    Let $(x,\langle v \rangle) \in \PP (\II_Z)$ (note that $v\in \ker(u^\vee|_x)$). Suppose the composition
    \[
        T_xX \fto{\theta_x} \Hom_k(\ker(u^\vee|_x), \coker(u^\vee|_x)) \to \Hom_k(\langle v \rangle, \coker(u^\vee|_x))
    \]
    where the first map is RKS and the second map is the natural restriction, is surjective. Then $\PP (\II_Z)$ is smooth at $(x,\langle v \rangle)$.
\end{prop}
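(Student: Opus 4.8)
The plan is to reduce the smoothness of $\PP(\II_Z)$ at a point $(x,\langle v\rangle)$ to a local computation inside the ``big'' flag-like space $\PP(\II_Z) \subseteq D = \PP(\Hom(E,F)^\vee)$, where $D \to X$ is a $\PP^N$-bundle over the smooth variety $X$, hence smooth. First I would recall that the section $u : X \to D$ realizes $\PP(\II_Z)$ as the ``fibered join'' of the section $u$ with the universal structure on $D$: concretely, over a point $x\in X$ with $r = \rank(u^\vee|_x)$, a point of $\PP(\II_Z)$ lying over $x$ is a line $\langle v\rangle \subseteq \ker(u^\vee|_x)$, and $\II_Z\cdot\PP(\II_Z)$ being locally principal (from the length-two resolution \eqref{eq:idealRes}) identifies $\PP(\II_Z)$ with the closure of the graph of $u$ inside the appropriate incidence variety. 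The key geometric input is the description of the rank stratum: $D_r$ is smooth away from $D_{r-1}$, with normal bundle at $y = u(x)$ equal to $\Hom_k(\ker(u^\vee|_x),\coker(u^\vee|_x))$, and $\PP(\II_Z)$ maps to $D$ by $(x,\langle v\rangle)\mapsto (x, \langle v\rangle \text{ as a rank-}1\text{ map})$ — more precisely it is cut out near $(x,\langle v\rangle)$ by the condition that the universal line in the $E$-direction be killed by $u^\vee$.

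Next I would set up the deformation-theoretic criterion. Smoothness of $\PP(\II_Z)$ at $(x,\langle v\rangle)$ of the expected dimension $\dim X$ follows if the Zariski tangent space there has dimension $\le \dim X$. To bound the tangent space, I would use that a tangent vector at $(x,\langle v\rangle)$ consists of a tangent vector $\dot x \in T_xX$ together with a first-order deformation $\dot v$ of the line $\langle v\rangle$ subject to the linearized incidence condition ``$u^\vee(v) = 0$ to first order'', i.e. $u^\vee|_x(\dot v) + (d u^\vee)(\dot x)(v) = 0$ in $\coker(u^\vee|_x)$ — but only the class mod $\im(u^\vee|_x)$ survives, and modding out by reparametrizations of $\langle v\rangle$ the relevant obstruction lives in $\Hom_k(\langle v\rangle, \coker(u^\vee|_x))$. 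The upshot is that the fiber of $T_{(x,\langle v\rangle)}\PP(\II_Z) \to T_xX$ over a given $\dot x$ is a torsor under $\ker\big(T_{\langle v\rangle}\PP(\ker u^\vee|_x) \to \Hom(\langle v\rangle,\coker)\big)$ shifted by whether $\dot x$ maps to the image of the composite in the proposition. Hence the composite being surjective forces $T_{(x,\langle v\rangle)}\PP(\II_Z)$ to have exactly the expected dimension, giving smoothness.

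Concretely the steps are: (1) write down a local model for $\PP(\II_Z) \hookrightarrow D$ near $(x,\langle v\rangle)$ using the resolution \eqref{eq:idealRes} and local trivializations of $E,F$; (2) compute the Jacobian of the defining equations and identify its cokernel with (a quotient of) the cokernel of the composite map in the statement, using the identification $du$ with the differential of the section and the normal bundle formula for $D_r$; (3) conclude that surjectivity of the composite is exactly the statement that the Jacobian has maximal rank, i.e. that $\PP(\II_Z)$ is a local complete intersection of the expected codimension at that point and is smooth there. This is essentially \cite[Proposition 3.5]{Hir}, so I would cite that proof for the routine linear-algebra bookkeeping and only reproduce the conceptual skeleton.

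The main obstacle I anticipate is step (2): keeping careful track of what is being quotiented. The RKS map $\theta_x$ already involves the projection $T_yD \to N_{D_r/D}|_y$, and when one further restricts $\ker(u^\vee|_x)$ to the line $\langle v\rangle$ there is a genuine subtlety about whether the relevant first-order condition is ``$u^\vee$ kills $v$'' versus ``$u^\vee$ kills the whole kernel'', and correspondingly whether the target is $\Hom(\langle v\rangle, \coker)$ or $\Hom(\ker, \coker)$. Pinning down that the correct linearized incidence equation only sees the $\langle v\rangle$-component — so that surjectivity onto $\Hom(\langle v\rangle,\coker)$, not the larger space, suffices — is the crux; once that is clear, the rest is standard.
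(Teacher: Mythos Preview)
The paper does not give its own proof of this proposition; it is stated with a citation to \cite[Proposition 3.5]{Hir} and used as a black box. So there is nothing in the paper to compare your argument against.

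That said, your sketch is broadly the standard argument, but there is some confusion in the setup that you should fix. You write ``$\PP(\II_Z) \subseteq D = \PP(\Hom(E,F)^\vee)$'', but $\PP(\II_Z)$ does not naturally embed in $D$. The role of $D$ in the paper is different: the map $u$ gives a \emph{section} $X \to D$, and the rank stratification $D_r$ of $D$ pulls back along this section to give the stratification of $X$ by the $W_i$; the RKS map is then built from the differential of this section composed with the normal projection. The correct ambient for $\PP(\II_Z)$ is $\PP(F)$ (in the Grothendieck convention), since the surjection $F \twoheadrightarrow \II_Z$ gives a closed immersion $\PP(\II_Z) \hookrightarrow \PP(F)$, and a point $(x,\langle v\rangle)$ corresponds to a line $\langle v\rangle \subseteq \ker(u^\vee|_x) \subseteq F_x^\vee$.

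Once you work in $\PP(F)$, your tangent-space description in the second paragraph is essentially correct: a first-order deformation of $(x,\langle v\rangle)$ is a pair $(\dot x, \dot v)$ with $\dot v \in F_x^\vee / \langle v\rangle$ subject to the linearized condition that $u^\vee(v)$ stay zero to first order, and the obstruction to lifting a given $\dot x$ lands precisely in $\Hom(\langle v\rangle, \coker(u^\vee|_x))$. Surjectivity of the composite then gives the dimension bound. Your identification of the ``crux'' --- that only the $\langle v\rangle$-component, not all of $\ker(u^\vee|_x)$, enters the linearized equation --- is exactly right, and is the reason the criterion is stated with the restriction map. So the skeleton is sound; just correct the ambient space before writing out step (1).
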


By applying $\Hom_{X}(-,k(x))$ to the exact sequence \eqref{eq:idealRes}, we get an exact sequence
\[
    0 \to \Hom_X(\II_Z, k(x)) \to F|_x^\vee \fto{u^\vee|_x} E_x^\vee \to \Ext^1_X(\II_Z,k(x)) \to 0.
\]
Thus $\Hom_X(\II_Z,k(x)) \cong \ker(u^\vee|_x)$ and $\Ext^1_X(\II_Z,k(x)) \cong \coker(u^\vee|_x)$. Moreover, one sees from the Koszul resolution that $T_xX = \Ext^1_X(k(x),k(x))$. Thus, we can rewrite the RKS map as a map
\[
    \theta_x : \Ext_X^1(k(x),k(x)) \to \Hom_k(\Hom_X(\II_Z,k(x)), \Ext_X^1(\II_Z,k(x)))  \cong \Ext^1(\Hom(\II_Z , k(x)) \otimes \II_Z, k(x)).
\]

\begin{prop}[{\cite[\S2]{Tik}}]\label{prop:RKS=Yoneda}
    The RKS map
    \[
        \theta_x : \Ext_X^1(k(x),k(x)) \to \Ext^1(\Hom(\II_Z , k(x)) \otimes \II_Z, k(x))
    \]
    is obtained by applying $\Ext^1_X(-,k(x))$ to the evaluation map $\Hom(\II_Z,k(x)) \otimes \II_Z \to k(x)$.
\end{prop}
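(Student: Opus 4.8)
The plan is to unwind both maps completely in terms of the locally free resolution $0 \to E \fto{u} F \to \II_Z \to 0$ and then to compare them by a local computation at $x$. Throughout I use the identifications recorded just before the statement: applying $\Hom_X(-,k(x))$ to the resolution gives $\Hom_X(\II_Z,k(x)) \cong \ker(u^\vee|_x) \subseteq F^\vee|_x$ and $\Ext^1_X(\II_Z,k(x)) \cong \coker(u^\vee|_x) = E^\vee|_x/\im(u^\vee|_x)$, while the Koszul resolution of $k(x)$ gives $\Ext^1_X(k(x),k(x)) \cong T_xX$. Under these identifications both maps in the statement become homomorphisms $T_xX \to \Hom_k(\ker(u^\vee|_x),\coker(u^\vee|_x))$, so it is enough to check that they agree as such.

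First I would make the evaluation side explicit. Writing $V := \Hom_X(\II_Z,k(x))$, which is a finite-dimensional vector space supported at $x$, we get $\Ext^1_X(V\otimes \II_Z,k(x)) \cong \Hom_k\bigl(V,\Ext^1_X(\II_Z,k(x))\bigr)$, and the Yoneda description of pullback along the evaluation map shows that $\Ext^1_X(\mathrm{ev},k(x))$ sends a class $\alpha$ to the homomorphism $\psi \mapsto \alpha\circ\psi$, where $\alpha\circ\psi \in \Ext^1_X(\II_Z,k(x))$ is the Yoneda product of $\psi\colon \II_Z\to k(x)$ with $\alpha$. So the proposition reduces to the identity $\theta_x(v)(\psi) = \alpha_v\circ\psi$ for every $\psi \in \Hom_X(\II_Z,k(x))$, where $\alpha_v \in \Ext^1_X(k(x),k(x))$ is the class corresponding to $v \in T_xX$.

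Next I would make the RKS side explicit. Fixing local coordinates at $x$ and trivializations of $E,F$ (equivalently, a connection), the section $u\colon X \to D$ is $x'\mapsto(x',\langle u^\vee|_{x'}\rangle)$ for the matrix-valued function $x'\mapsto u^\vee|_{x'}$, and differentiating along $v$, mapping into $T_yD$, and projecting onto $N_{D_r/D}|_y = \Hom_k(\ker(u^\vee|_x),\coker(u^\vee|_x))$ yields $\theta_x(v)\colon w \mapsto \overline{(\partial_v u^\vee)|_x\, (w)} = \overline{w\circ(\partial_v u)|_x}$, the bar denoting the class in $\coker(u^\vee|_x)$. The standard fact that the normal bundle of the rank locus is intrinsically $\Hom_k(\ker,\coker)$ is exactly what makes this expression independent of the chosen coordinates, trivializations, and lift of $v$. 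Identifying $\psi \leftrightarrow w$ by lifting $\psi$ along $F\onto\II_Z$ to $\tilde\psi\colon F\to k(x)$ (which annihilates $\im(u)$, so $w := \tilde\psi|_x \in \ker(u^\vee|_x)$), the comparison is now a purely local statement.

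The heart of the matter, and the step I expect to be the main obstacle, is to show that the Yoneda product $\alpha_v\circ\psi$, computed through the resolutions, is exactly the directional derivative $\overline{w\circ(\partial_v u)|_x}$. I would do this by lifting $\psi$ to a map of complexes from the resolution $[E\fto{u}F]$ of $\II_Z$ to the Koszul resolution of $k(x)$ and composing with a cocycle representative of $\alpha_v$: the commuting squares force the degree-one component of this chain map to be given, at $x$ and modulo $\im(u^\vee|_x)$, by the coordinatewise derivatives of $\tilde\psi\circ u$, and since $\tilde\psi\circ u$ vanishes at $x$ its first-order variation along $v$ is $w\circ(\partial_v u)|_x$ up to a term lying in $\im(u^\vee|_x)$. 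Pairing with $\alpha_v$, viewed as the covector recording $v$ on the Koszul generators, then produces precisely $\overline{w\circ(\partial_v u)|_x} \in \coker(u^\vee|_x) \cong \Ext^1_X(\II_Z,k(x))$. The delicate points are keeping the various canonical dualizations and identifications consistent and verifying that the ambiguities in the lifts genuinely drop out on $\ker(u^\vee|_x)$ modulo $\im(u^\vee|_x)$ — which is, in the end, the same well-definedness that underlies the normal bundle of the rank locus.
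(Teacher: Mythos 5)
The paper offers no proof of this proposition: it is stated with a citation to \cite[\S2]{Tik} and used as a black box, so there is no in-paper argument to compare yours against. On its own merits, your outline is correct and is the standard argument. The two preliminary reductions are right: since $\Hom_X(\II_Z,k(x))$ is a finite-dimensional vector space, $\Ext^1_X(\Hom(\II_Z,k(x))\otimes\II_Z,k(x))\cong\Hom_k\bigl(\Hom_X(\II_Z,k(x)),\Ext^1_X(\II_Z,k(x))\bigr)$, and pullback of an extension class along the evaluation map is $\alpha\mapsto(\psi\mapsto\alpha\circ\psi)$ by contravariant functoriality of $\Ext$ in the first variable. The key chain-level computation you defer also goes through exactly as you predict: lifting $\psi$ to a map of complexes from $[E\fto{u}F]$ to the Koszul resolution $[\cdots\to K_1\to K_0]$ of $k(x)$, the commuting square forces $d\circ\psi_1=\psi_0\circ u$ with $\psi_0\circ u$ valued in $\mathfrak{m}_x$, and pairing with the cocycle $\alpha_v$ extracts $\partial_v(\psi_0\circ u)|_x=(\partial_v\psi_0)|_x\circ u|_x+w\circ(\partial_v u)|_x$ by the Leibniz rule; the first summand lies in $\im(u^\vee|_x)$, so the Yoneda product is $\overline{w\circ(\partial_v u)|_x}\in\coker(u^\vee|_x)$, which is the RKS map in local trivializations. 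The well-definedness worries you flag (choice of $\psi_0$, of $\psi_1$, of coordinates and trivializations) all cancel modulo $\im(u^\vee|_x)$ for the same reason, matching the intrinsic description of $N_{D_r/D}$. To make this a complete proof one only needs to write out that diagram explicitly; you have correctly isolated it as the sole substantive step.
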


\subsection{Smoothness of $S^{[n,n+1]}$}
Recall that the natural map $\res_{[n,n+1]} \times \pi_n : S^{[n,n+1]} \to S \times S^{[n]}$ is the blowup of the universal family $S^{[1,n]}$. Since $S^{[1,n]}$ is a Cohen-Macaulay subscheme of codimension two in $S \times S^{[n]}$, we can apply the machinery above to establish smoothness of $S^{[n,n+1]}$. 
In particular, we have a resolution
\begin{equation}\label{eq:AppendixRes}
    0 \to E \fto{u} F \to \II_{S^{[1,n]}} \to 0
\end{equation}
where $E,F$ are vector bundles. 

In what follows, we assume that $S$ is a smooth surface. Since smoothness of $S^{[n,n+1]}$ is local on $S$, we may assume that $\pic^0(S)$ is trivial. We let $X = S\times S^{[n]}$ and $Z = S^{[1,n]}$. We denote by $\pi_1 : X \to S$ and $\pi_2 : X \to S^{[n]}$ the projection maps.

\begin{thm}[{\cite{Tik}}]\label{thm:OneStepIsSmooth}
    Let $x = (p,\xi) \in X = S \times S^{[n]}$. The RKS map (see Definition \ref{def:rankKodairaSpencerMap})
    \[
        \theta_x : \Ext^1_X(k(x),k(x)) \to \Ext_X^1\left( \Hom_X(\II_{Z},k(x)) \otimes \II_Z , k(x)\right)
    \]
    is surjective. Thus, $S^{[n,n+1]}$ is smooth.
\end{thm}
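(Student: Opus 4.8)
The plan is to verify the surjectivity of $\theta_x$ by a local computation, exploiting the product structure $X = S \times S^{[n]}$. First I would decompose the source using the Künneth-type splitting of the tangent space at $x = (p,\xi)$, namely
\[
    \Ext^1_X(k(x),k(x)) \cong \Ext^1_S(k(p),k(p)) \oplus \Ext^1_{S^{[n]}}(k(\xi),k(\xi)) = T_pS \oplus T_\xi S^{[n]},
\]
so it suffices to understand the image of the $T_\xi S^{[n]}$ summand (the $S^{[n]}$-directions). By Proposition \ref{prop:RKS=Yoneda}, $\theta_x$ is computed by applying $\Ext^1_X(-,k(x))$ to the evaluation map $\mathrm{ev} : \Hom_X(\II_Z, k(x)) \otimes \II_Z \to k(x)$, so the problem reduces to an $\Ext$ computation about the ideal sheaf $\II_Z = \II_{S^{[1,n]}}$ near $x$.

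Second, I would identify the target concretely. Since $Z = S^{[1,n]}$ is the universal family over $S^{[n]}$, near a point $x = (p,\xi)$ the scheme $Z$ looks (after choosing local coordinates on $S$) like the incidence locus cut out by the condition $p \in \xi$; its ideal $\II_Z$ in $\OO_{S\times S^{[n]}}$ is generated at $x$ by exactly $\dim_{\CC}(\II_\xi \otimes \CC(p))$ elements, i.e. by the number of generators of $\II_\xi$ at $p$. Thus $\Hom_X(\II_Z, k(x))$ and $\Ext^1_X(\II_Z, k(x))$ are identified with $\ker(u^\vee|_x)$ and $\coker(u^\vee|_x)$ as in the setup, and the target $\Ext^1_X(\Hom_X(\II_Z,k(x)) \otimes \II_Z, k(x))$ becomes a direct sum of copies of $\Ext^1_X(\II_Z,k(x))$. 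The key point is that deforming $\xi$ inside $S^{[n]}$ (keeping $p$ fixed, or moving it) produces, via the Yoneda/evaluation description, precisely the first-order deformations of the generators of $\II_Z$, and one checks that these span all of $\Hom(\Hom(\II_Z,k(x)) \otimes \II_Z, k(x))$ at the relevant first-order level — this is where the smoothness (indeed reducedness of the relevant Hilbert–scheme tangent spaces) of $S^{[n]}$ itself is used.

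Third, I would reduce to the punctual case: write $\xi = \eta \sqcup \eta'$ where $\eta$ is the part of $\xi$ supported at $p$ and $\eta'$ is supported away from $p$; the part $\eta'$ contributes nothing to $\II_Z$ near $x$ and can be deformed freely, so the computation is local at $p$ and depends only on $\eta$. Then I would choose an explicit minimal set of generators $g_1,\dots,g_i$ of $\II_\eta \subseteq \CC[[x,y]]$ and explicitly exhibit enough first-order deformations of $\eta$ within $S^{[n]}$ — deformations of the coefficients of the $g_j$, and translations of the support — to surject onto $\Hom_\CC(\ker(u^\vee|_x), \coker(u^\vee|_x))$. Invoking the known smoothness of $S^{[n]}$ (Fogarty) guarantees that all these first-order data actually come from genuine tangent vectors of $S^{[n]}$, so the map $T_\xi S^{[n]} \to \Hom(\ker u^\vee|_x, \coker u^\vee|_x)$ is onto.

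The main obstacle is the last surjectivity verification: showing that the first-order deformations of the generators of $\II_\eta$ coming from $T_\xi S^{[n]}$ really hit every homomorphism $\ker(u^\vee|_x) \to \coker(u^\vee|_x)$. Equivalently, one must show that the obstruction map identifying $\coker(u^\vee|_x) = \Ext^1_X(\II_Z,k(x))$ with a space of "relations among the generators modulo $\mathfrak{m}$" is fully realized by moving in $S^{[n]}$. I expect this to follow from the fact that the forgetful map $S^{[n]} \to S^{[n-\ell]}$ (forgetting $\eta$) combined with the description of $S^{[\ell]}$'s tangent space at a punctual scheme $\eta$ gives $\dim T_\xi S^{[n]} = 2n$, matching the dimension count so that no deformation direction is "wasted"; a dimension/rank comparison then forces surjectivity. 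Once $\theta_x$ is surjective for all $x$, Proposition \ref{prop:SmoothnessCriterion} gives smoothness of $\PP(\II_Z) = S^{[n,n+1]}$ at every point, completing the proof.
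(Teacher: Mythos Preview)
Your overall architecture---restrict $\theta_x$ to the summand $T_\xi S^{[n]} \subseteq T_x X$, then transfer the question to a statement on the surface $S$ about $\II_\xi$ near $p$---matches the paper's approach exactly, and your reduction to the punctual part $\eta$ of $\xi$ is also correct and used implicitly in the paper. The paper formalizes this transfer via a commutative diagram (Proposition~\ref{prop:AppendixCD}) identifying the restricted RKS map with the map
\[
\rho : \Ext^1_S(\II_\xi,\II_\xi) \longrightarrow \Ext^1_S(\II_\xi,\II_\xi \otimes k(p))
\]
induced by the restriction $\II_\xi \to \II_\xi \otimes k(p)$. So the entire content of the theorem reduces to showing that $\rho$ is surjective.

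The genuine gap in your proposal is the last step. Your proposed mechanism---``$\dim T_\xi S^{[n]} = 2n$, matching the dimension count so that no deformation direction is wasted; a dimension/rank comparison then forces surjectivity''---does not work. The target $\Hom_k(\ker u^\vee|_x,\coker u^\vee|_x)$ has dimension $i(i-1)$ where $i$ is the number of local generators of $\II_\xi$ at $p$; this is in general much smaller than $2n$, so most tangent directions \emph{are} wasted (they lie in $\ker \theta_x$), and no rank comparison alone can detect whether the remaining ones surject. What is actually needed is an algebraic argument about $\rho$. The paper does this in Lemma~\ref{lem:AppendixSurjectivity}: one rewrites the cokernel of $\rho$ as the kernel of the natural map $\Ext^2_S(\OO_\xi,\OO_{\xi'}) \to \Ext^2_S(\OO_\xi,\OO_\xi)$, and then computes both $\Ext^2$ groups explicitly from the Hilbert--Burch resolution of $\OO_\xi$, finding that both equal $\OO_\xi$ and the map sends $1$ to $1$. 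This Hilbert--Burch computation is the missing ingredient in your plan; without it (or an equivalent explicit identification of $\coker u^\vee|_x$ with something you can hit), the surjectivity assertion is unproved.
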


In fact, Tikhomirov proves a stronger statement. Namely, the restriction of $\theta_x$ to the subspace $T_{\xi}S^{[n]}$ is surjective. To proceed, we transfer this statement to a cohomological statement on the surface. Since $\pic^0(S)$ is trivial, we have an isomorphism 
\[
    \psi : T_{\xi}S^{[n]} = \Ext^1_{S^{[n]}}(k(\xi),k(\xi)) \fto{\sim} \Ext^1_S(\II_{\xi}, \II_{\xi}).
\]
In terms of extensions, this isomorphism is given by pulling back along $\pi_2$ and tensoring with $\II_{Z}$. Note that $\pi_2^*k(\xi) \otimes \II_Z = \II_{\xi}$. Thus, given an extension $0 \to k(\xi) \to \eta \to k(\xi) \to 0$ with extension class $e \in \Ext^1_{S^{[n]}}(k(\xi),k(\xi))$, we have the extension
\[
    \psi(e) : 0 \to \II_{\xi} \to \pi_2^*\eta \otimes \II_{Z} \to \II_{\xi} \to 0
\]
of sheaves on $S$. Note that $-\otimes \II_Z$ is exact since $\II_{Z}$ has a minimal free resolution of length two.

\begin{prop}[{cf. \cite[\S3 Proposition 2]{Tik}}]\label{prop:AppendixCD}
    Let $x = (p,\xi) \in X$. We have a commutative diagram where the indicated arrows are isomorphisms
    \begin{center}
        \begin{tikzcd}
            \Ext^1_{S^{[n]}}(k(\xi),k(\xi)) \arrow[r, "\theta_x"] \arrow[dd, "\cong"', "\psi"] & \Ext^1_X(\II_Z \otimes \Hom(\II_Z,k(x)), k(x)) \arrow[d, "\res", "\cong"'] \\
             & \Ext^1_S(\II_{\xi} \otimes \Hom_S(\II_\xi, k(p)), k(p)) \\
            \Ext^1_S(\II_\xi,\II_\xi) \arrow[r,"\rho"] & \Ext^1_S(\II_{\xi} , \II_{\xi} \otimes k(p)) \arrow[u, "\phi"', "\cong"] 
        \end{tikzcd}
    \end{center}
\end{prop}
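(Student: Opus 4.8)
The plan is to establish the commutativity of the diagram in Proposition \ref{prop:AppendixCD} by unwinding the definitions of all four maps in terms of Yoneda products and tracking a single extension class around the square. The key conceptual input is Proposition \ref{prop:RKS=Yoneda}, which identifies the RKS map $\theta_x$ as the map obtained by applying $\Ext^1_X(-,k(x))$ to the evaluation morphism $\mathrm{ev}: \Hom_X(\II_Z,k(x)) \otimes \II_Z \to k(x)$. Dually, $\theta_x$ is Yoneda composition with the class of $\mathrm{ev}$ (viewed as an element of a suitable $\Hom$ or $\Ext^0$). So the strategy is: (1) identify the vertical isomorphism $\psi$ as ``pull back along $\pi_2$ and tensor with $\II_Z$'' (already described in the text); (2) identify the restriction isomorphism $\res$ on the right as restriction of sheaves on $X = S \times S^{[n]}$ to the slice $\{p\} \times S^{[n]}$... wait, more precisely to a neighborhood where the relevant sheaves become sheaves pulled back appropriately, so that $\Ext^1_X$ computed with support at $x$ matches $\Ext^1_S$ computed with support at $p$; and (3) identify $\phi$ as the isomorphism $\Ext^1_S(\II_\xi, \II_\xi \otimes k(p)) \cong \Ext^1_S(\II_\xi \otimes \Hom_S(\II_\xi,k(p)), k(p))$ coming from the local duality/adjunction $\Hom_S(\II_\xi, \II_\xi \otimes k(p)) \cong \Hom_S(\II_\xi \otimes \Hom_S(\II_\xi, k(p)), k(p))$ valid since $\II_\xi \otimes k(p)$ is a skyscraper supported at $p$.

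Concretely, I would start with an extension class $e \in \Ext^1_{S^{[n]}}(k(\xi),k(\xi))$ represented by $0 \to k(\xi) \to \eta \to k(\xi) \to 0$. Going down-then-right: $\psi(e)$ is the extension $0 \to \II_\xi \to \pi_2^*\eta \otimes \II_Z \to \II_\xi \to 0$ of sheaves on $S$, and $\rho(\psi(e))$ is this extension pushed forward along $\II_\xi \to \II_\xi \otimes k(p)$ (i.e. tensored with $k(p)$ in the appropriate sense), landing in $\Ext^1_S(\II_\xi, \II_\xi \otimes k(p))$; applying $\phi$ reinterprets this as an element of $\Ext^1_S(\II_\xi \otimes \Hom_S(\II_\xi,k(p)),k(p))$. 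Going right-then-down: $\theta_x(e)$ is, by Proposition \ref{prop:RKS=Yoneda}, the Yoneda composite of $\pi_2^*e$ (tensored with $\II_Z$, i.e. $\psi(e)$ again) with the evaluation map, and then $\res$ transports this to $S$. The claim is that these two elements of $\Ext^1_S(\II_\xi \otimes \Hom_S(\II_\xi,k(p)),k(p))$ coincide. This should follow from the naturality (functoriality/associativity) of Yoneda products: the evaluation map $\mathrm{ev}: \Hom_S(\II_\xi,k(p)) \otimes \II_\xi \to k(p)$ factors as $\Hom_S(\II_\xi,k(p)) \otimes \II_\xi \to \Hom_S(\II_\xi,k(p)) \otimes (\II_\xi \otimes k(p)) \to k(p)$, and composing an extension of $\II_\xi$ by $\II_\xi$ with evaluation is the same as first pushing the extension to $\II_\xi \otimes k(p)$ and then composing with the residual evaluation — which is exactly the $\phi \circ \rho$ description.

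The main obstacle, and the step that deserves the most care, is the compatibility of the two ``change of space'' isomorphisms $\res$ and $\psi$ with the Yoneda structure — i.e. showing that applying $\Ext^1_X(-,k(x))$ to $\mathrm{ev}_X: \Hom_X(\II_Z,k(x)) \otimes \II_Z \to k(x)$ and then restricting to $S$ produces exactly the $S$-side evaluation map composed appropriately. This requires knowing that $\Hom_X(\II_Z, k(x)) \cong \Hom_S(\II_\xi, k(p))$ compatibly (both are $\ker(u^\vee|_x)$ by the exact sequence obtained in the text by applying $\Hom_X(-,k(x))$ to \eqref{eq:idealRes}), that $\II_Z \otimes \pi_2^* k(\xi) = \II_\xi$, and that the local-to-global and base-change identities relating $\Ext^*_X$ supported at $x$ with $\Ext^*_S$ supported at $p$ are natural in the sheaf. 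Since $p$ is fixed throughout, all the action is along $S^{[n]}$, and restricting to the fiber $S \times \{*\}$ over a point of $S^{[n]}$... no — restricting along $S^{[n]}$ to the point $\xi$ and remembering the $S$-direction — is what converts $X$-computations to $S$-computations; I expect this to be the delicate bookkeeping. A clean way to organize it is to factor everything through the derived tensor / adjunction identities and cite Proposition \ref{prop:RKS=Yoneda} once to reduce the entire diagram to an identity of Yoneda composites, then verify that identity by the associativity of composition in the derived category of $S$.
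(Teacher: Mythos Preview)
Your approach is essentially the paper's: define each arrow and then invoke Proposition~\ref{prop:RKS=Yoneda} to reduce commutativity to the naturality of Yoneda composition with the evaluation map. The paper's proof is terser --- after constructing $\theta_x,\psi,\res,\rho,\phi$ it simply says the diagram commutes because $\theta_x$ is $\Ext^1_X(-,k(x))$ applied to $\mathrm{ev}$ --- whereas you make the factorization $\Hom_S(\II_\xi,k(p))\otimes \II_\xi \to \Hom_S(\II_\xi,k(p))\otimes(\II_\xi\otimes k(p)) \to k(p)$ explicit, which is a helpful elaboration. One small correction: the isomorphism $\res$ is restriction to the slice $S\times\{\xi\}$ (the fiber of $\pi_2$), not to $\{p\}\times S^{[n]}$; this is what makes $\II_Z$ restrict to $\II_\xi$ and the resolution \eqref{eq:AppendixRes} restrict to a free resolution of $\II_\xi$, so your later self-correction is the right picture.
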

\begin{proof}
    First, we construct each of these maps. The map $\theta_x$ is the usual RKS map, and $\psi$ is the isomorphism constructed above. The map $\res$ is given by restriction to $S \times \{\xi\}$ and is an isomorphism since the restriction of \eqref{eq:AppendixRes} to $S \times \{\xi\}$ is the minimal (locally) free resolution
    \[
        0 \to E|_{S \times \{\xi\}} \fto{u} F_{S \times \{\xi\}} \to \II_{\xi} \to 0.
    \]
    Thus $\Ext^1_S(\II_{\xi},k(p)) \cong \Ext^1_X(\II_Z,k(x))$ and $\Hom_S(\II_\xi, k(p)) \cong \Hom_X(\II_Z,k(x))$. The map $\rho$ is obtained by applying $\Ext^1_S(\II_\xi,-)$ to the restriction map $\II_\xi \to \II_\xi \otimes k(p)$. The isomorphism $\phi$ arises from the perfect pairing
    \begin{center}
        \begin{tikzcd}
            \Hom(\II_\xi,k(p)) \otimes \left( \II_\xi \otimes k(p) \right) \arrow[r]  &  k(p) 
        \end{tikzcd}
    \end{center}
    given by $(f, a \otimes \lambda) \mapsto \lambda f(a)$. Finally given the duality above, the diagram commutes since $\theta_x$ is obtained by applying $\Ext^1_X(k(x),-)$ to the evaluation map $\II_Z \otimes_X \Hom_X(\II_Z,k(x)) \to k(x)$. 
\end{proof}

The final step is to prove that $\rho$ is in fact surjective. Tikhomirov accomplishes this by completing the local rings in question, thereby reducing to the case $S = \PP^2$. We proceed with an easy application of Hilbert-Burch \cite[Theorem 3.2]{E05}. Note that the kernel of the restriction map $\II_{\xi} \to \II_{\xi} \otimes k(p)$ is the ideal $\II_{\xi} \cdot \mathfrak{m}$ where $\mathfrak{m}$ is the maximal ideal at $p$. Note that this ideal may have much larger colength than $\II_\xi$.

\begin{lemma}[{cf. \cite[\S3 Lemma 4]{Tik}}]\label{lem:AppendixSurjectivity}
    The map $\Ext^1_S(\II_\xi,\II_\xi) \to \Ext^1_S(\II_\xi,\II_\xi \otimes k(p))$ obtained by applying $\Ext^1(\II_\xi,-)$ to the restriction map $\II_{\xi} \to \II_{\xi} \otimes k(p)$ is surjective.
\end{lemma}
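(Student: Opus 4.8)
The plan is to reduce the surjectivity of $\Ext^1_S(\II_\xi,\II_\xi) \to \Ext^1_S(\II_\xi,\II_\xi\otimes k(p))$ to the vanishing of an $\Ext^2$ group via the long exact sequence, and then to establish that vanishing using the explicit Hilbert--Burch resolution of $\II_\xi$. Since the assertion is local at $p$, I may assume $S$ is affine and that $\xi$ is supported at $p$; then by Hilbert--Burch \cite[Theorem 3.2]{E05} the ideal $\II_\xi$ has a free resolution
\[
    0 \to \OO_S^{m} \fto{A} \OO_S^{m+1} \to \II_\xi \to 0,
\]
where $A$ is the $(m+1)\times m$ matrix whose maximal minors generate $\II_\xi$. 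In particular $\II_\xi$ has projective dimension one, so $\Ext^i_S(\II_\xi, \mathcal{G}) = 0$ for all $i \geq 2$ and any sheaf $\mathcal{G}$.

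**First** I would write down the short exact sequence
\[
    0 \to \II_\xi\cdot\mathfrak{m} \to \II_\xi \to \II_\xi\otimes k(p) \to 0
\]
identifying the kernel of the restriction map as $\II_\xi \cdot \mathfrak{m}$ (this is the observation recorded just before the lemma statement). Applying $\Hom_S(\II_\xi, -)$ gives a long exact sequence whose relevant piece is
\[
    \Ext^1_S(\II_\xi,\II_\xi) \to \Ext^1_S(\II_\xi,\II_\xi\otimes k(p)) \to \Ext^2_S(\II_\xi, \II_\xi\cdot\mathfrak{m}).
\]
**Then** the surjectivity of the first map is equivalent to the vanishing of $\Ext^2_S(\II_\xi,\II_\xi\cdot\mathfrak{m})$. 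But this last group vanishes because $\II_\xi$ has projective dimension one over $\OO_S$ (from the Hilbert--Burch resolution above, using that $S$ is a smooth surface so $\OO_{S,p}$ is regular of dimension two and $\II_\xi$ is a height-two ideal, hence Cohen--Macaulay of codimension two with a length-one resolution). So applying $\Hom_S(-, \II_\xi\cdot\mathfrak{m})$ to the Hilbert--Burch resolution shows $\Ext^i_S(\II_\xi, -) = 0$ for $i\geq 2$, giving the claim.

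**The main obstacle** is not really an obstacle once the right perspective is taken: it is simply recognizing that the whole lemma collapses to the projective-dimension-one statement, which is exactly what Hilbert--Burch provides for codimension-two Cohen--Macaulay ideals on a smooth surface. The one point requiring a little care is the globalization: the argument above is local at $p$, so I would note that $\Ext^i_S(\II_\xi, \mathcal{G})$ may be computed locally and that the Hilbert--Burch resolution exists in a neighborhood of each point of $\mathrm{Supp}(\xi)$, while away from $\mathrm{Supp}(\xi)$ the sheaf $\II_\xi$ is locally free so all higher $\Ext$ sheaves vanish there trivially; alternatively, since $\II_\xi\otimes k(p)$ and $\II_\xi\cdot\mathfrak{m}$ only differ from $\II_\xi$ near $p$, one reduces immediately to the local ring $\OO_{S,p}$. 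This is precisely the streamlining over Tikhomirov's original argument, which instead completed the local rings to reduce to $S = \PP^2$; here Hilbert--Burch does the work directly.
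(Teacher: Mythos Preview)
Your long exact sequence approach is more direct than the paper's, and once one is working over an affine $S$ (or a local ring) it is entirely correct: Hilbert--Burch gives $\II_\xi$ projective dimension one, so $\Ext^2_S(\II_\xi,-)$ vanishes and the surjectivity is immediate. The paper instead passes through the isomorphisms $\Ext^1_S(\II_\xi,\II_\xi)\cong\Hom_S(\II_\xi,\OO_\xi)\cong\Ext^1_S(\OO_\xi,\OO_\xi)$ and reduces to checking that $\Ext^2_S(\OO_\xi,\OO_{\xi'})\to\Ext^2_S(\OO_\xi,\OO_\xi)$ is injective, which it verifies by an explicit matrix computation with the Hilbert--Burch resolution. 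Your route avoids that computation entirely.

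The gap is in your reduction to the affine case. You write that ``$\Ext^i_S(\II_\xi,\mathcal{G})$ may be computed locally,'' but this is false for global $\Ext$ on a projective surface: there are contributions from $H^p(S,\mathcal{E}xt^q)$. Concretely, take $S$ a K3 surface. Serre duality gives $\Ext^2_S(\II_\xi,\II_\xi\cdot\mathfrak{m})\cong\Hom_S(\II_\xi\cdot\mathfrak{m},\II_\xi)^\vee$, and since $\mathcal{H}om(\II_\xi\cdot\mathfrak{m},\II_\xi)\cong\OO_S$ (both ideals are rank one with the same reflexive hull) this $\Ext^2$ is one-dimensional, not zero. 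What you have actually shown is that the \emph{sheaf} $\mathcal{E}xt^2(\II_\xi,-)$ vanishes; that is not enough. The paper's detour through $\Ext^\bullet(\OO_\xi,-)$ is precisely what makes the global argument work without further hypotheses: since $\OO_\xi$ is supported on a zero-dimensional scheme, all of its global $\Ext$ groups coincide with the local ones. Your argument can be repaired by invoking the paper's standing assumption $h^1(\OO_S)=0$: then $\Ext^1_S(\II_\xi,\II_\xi)\cong\Hom_S(\II_\xi,\OO_\xi)$, which is genuinely local, so the restriction to an affine neighbourhood is an isomorphism on the source as well as the target, and your affine computation finishes the job. Alternatively, keep the long exact sequence global and show instead that $\Ext^2_S(\II_\xi,\II_\xi\cdot\mathfrak{m})\to\Ext^2_S(\II_\xi,\II_\xi)$ is injective, which the same Serre duality computation gives.
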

\begin{proof}
    We have a commutative diagram
    \begin{center}
        \begin{tikzcd}
            \Ext^1_S(\II_\xi,\II_\xi)  \arrow[d, "\rho"] & \Hom_S(\II_\xi, \OO_{\xi}) \arrow[r, "\cong"] \arrow[l, "\cong"'] & \Ext^1(\OO_\xi, \OO_\xi) \arrow[d, "\rho'"] \\
            \Ext^1_S(\II_\xi, \II_{\xi} \otimes k(p))  & \Ext^1_S(\II_\xi, \II_\xi \otimes k(p)) \arrow[r, "\cong"] \arrow[l, "\cong"'] & \Ext^2(\OO_\xi, \II_\xi \otimes k(p)).
        \end{tikzcd}
    \end{center}
    The map $\rho'$ is obtained from the long exact sequence in $\Ext_S^\bullet(\OO_\xi,-)$ associated to the short exact sequence $0 \to \II_{\xi} \otimes k(p) \to \OO_{\xi'} \to \OO_{\xi} \to 0$. Thus, it suffices to show that the natural map $\Ext^2(\OO_\xi, \OO_{\xi'}) \to \Ext^2(\OO_\xi, \OO_\xi)$ is injective. Let $F_\bullet$ be the minimal free resolution of $\OO_\xi$
    \begin{align*}
	    F_\bullet : 0 \to R^{\oplus a} \fto{M} R^{\oplus a+1} \to R \to 0.
    \end{align*}
    By Hilbert-Burch, $\II_{\xi}$ is generated by the $a\times a$ minors of $M$. Now, $\Ext^i(\OO_\xi, \OO_{\xi})$ is the $i$th homology of the complex
\begin{align*}
    \Hom_R(F_\bullet, \OO_{\xi}) : 0 \to \OO_{\xi} \to \OO_{\xi}^{\oplus a+1} \fto{M^T} \OO_{\xi}^{\oplus a} \to 0.
\end{align*}
Thus $\Ext^2(\OO_{\xi}, \OO_{\xi})$ is $\OO_\xi$. Since $M$ is rank $a$ by assumption, the cokernel of $M^T$ is $\OO_{\xi}$ modulo the $a \times a$ minors of $M$, but this is exactly $\OO_{\xi}$. A nearly identical computation shows that $\Ext^2(\OO_{\xi},\OO_{\xi'})$ is $\OO_{\xi'}$ modulo the $a \times a$ minors of $M$. Since $\II_{\xi}$ is generated by the $a \times a$ minors of $M$, we get $\Ext^2(\OO_{\xi},\OO_{\xi'}) \cong \OO_{\xi}$. It is clear from these complexes that the map on $\Ext^2$ carries $1$ to $1$.
\end{proof}

\begin{proof}[Proof of Theorem \ref{thm:OneStepIsSmooth}]
    Let $x = (p,\xi) \in X$. Since $S^{[n,n+1]}$ is the blowup of $X = S^{[n]}$ in $Z = Z_{[n]}$, we may assume $p \in \xi$. 
    By Proposition \ref{prop:SmoothnessCriterion}, it suffices to show that the RKS map is surjective. Proposition \ref{prop:AppendixCD} shows that it suffices to show that $\rho$ is surjective, which is precisely the statement of Lemma \ref{lem:AppendixSurjectivity}. 
\end{proof}

\section{Resolution of singularities of $S^{[n,n+1,n+2]}$}\label{sec:ResolutionAndSings}

\subsection{Resolution of singularities for $S^{[n,n+1,n+2]}$}\label{subsec:Resolution}

In this section, we construct a resolution of singularities of $S^{[n,n+1,n+2]}$. 
Recall that the map $(\res_{[n+1,n+2]},\phi_{n+2}) : S^{[n,n+1,n+2]} \to S \times S^{[n,n+1]}$ is the projectivization of the ideal sheaf of the subscheme $Z_{[n,n+1]} = \{(p,\xi_n,\xi_{n+1}) : p \in \xi_{n+1}\}$. 
Since $S^{[n,n+1,n+2]}$ is irreducible, $\res_{[n+1,n+2]}$ is in fact the blowup of $S \times S^{[n,n+1]}$ at the locus $Z_{[n,n+1]}$.

\begin{prop}\label{prop:Zn,n+1IsReducible}
	The subscheme $Z_{[n,n+1]}$ has two irreducible components, which are isomorphic to $S^{[1,n,n+1]}$ and $S^{[n,n+1]}$.
\end{prop}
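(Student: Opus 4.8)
The statement claims $Z_{[n,n+1]} = \{(p,\xi_n,\xi_{n+1}) : p \in \xi_{n+1}\}$ has exactly two irreducible components, isomorphic to $S^{[1,n,n+1]}$ and $S^{[n,n+1]}$. The natural strategy is to exhibit two closed subsets whose union is $Z_{[n,n+1]}$, show each is irreducible of dimension $2n+2$, and then argue neither is contained in the other. The two subsets are the loci distinguished by where the marked point $p$ sits relative to the nesting: either $p \in \xi_n$, or $p$ is the residual point of $\xi_n \subseteq \xi_{n+1}$ (equivalently $p \in \xi_{n+1}$ but $p$ lies in $\xi_{n+1}$ with the ``extra'' multiplicity, i.e. $\II_{\xi_{n+1}}$ but not $\II_{\xi_n}$ is cut at $p$ — more precisely $p\in\xi_{n+1}$ and either $p\in\xi_n$ or $p=\res(\xi_n,\xi_{n+1})$). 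Since any $p\in\Supp(\xi_{n+1})$ is either in $\Supp(\xi_n)$ or equals the residual point, these two loci cover $Z_{[n,n+1]}$ set-theoretically.

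First I would identify the two candidate components explicitly. Define $W_1 = Z^n_{[n,n+1]} = \{(p,\xi_n,\xi_{n+1}) : p \in \xi_n\}$. As already noted in the excerpt (in the discussion preceding Theorem \ref{thm:TwoStepIrreducible} and in Proposition \ref{prop:Zn,n+1,n+2IsReducible}), there is an obvious map $S^{[1,n,n+1]} \to Z_{[n,n+1]}$ sending $(\xi_1,\xi_n,\xi_{n+1})$ to $(\Supp(\xi_1),\xi_n,\xi_{n+1})$, and this is an isomorphism onto its image $W_1$; since $S^{[1,n,n+1]}$ is irreducible of dimension $2n+2$ by Proposition \ref{prop: [1,n,n+1,n+2] irred} (or rather the $[1,n,n+1]$ version, which follows from the same argument, or is quoted from \cite{CheahThesis}), $W_1$ is an irreducible closed subset of dimension $2n+2$. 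Next define $W_2$ to be the image of the section map $S^{[n,n+1]} \to Z_{[n,n+1]}$, $(\xi_n,\xi_{n+1}) \mapsto (\res(\xi_n,\xi_{n+1}),\xi_n,\xi_{n+1})$; this is a morphism (the residual map is a morphism), it is injective, and it has inverse given by the projection, so it is an isomorphism onto its image $W_2 \cong S^{[n,n+1]}$, again irreducible of dimension $2n+2$. Then I would check that $W_1 \cup W_2 = Z_{[n,n+1]}$ as sets, which is the trichotomy observed above.

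The remaining points are that $W_1, W_2$ are genuinely distinct components (neither contained in the other) and that there are no others. For the former: a general point of $W_2$ has $\xi_{n+1}$ reduced with $n+1$ distinct points and $p$ one of them not in $\xi_n$, so $p\notin\xi_n$ and $(p,\xi_n,\xi_{n+1})\notin W_1$; conversely a general point of $W_1$ has $p\in\xi_n$ while the residual point is a different point of $\xi_{n+1}$, so it is not in $W_2$. Hence $W_1 \not\subseteq W_2$ and $W_2 \not\subseteq W_1$. Since $Z_{[n,n+1]}$ is the union of the two irreducible closed sets $W_1,W_2$, its irreducible components are exactly the maximal ones among $\{W_1,W_2\}$, and since neither contains the other, both are components and there are no others. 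I would also remark that $Z_{[n,n+1]}$ is pure of dimension $2n+2$ (it is finite and flat over $S^{[n,n+1]}$, hence Cohen–Macaulay, as used in Lemma \ref{lem: codim two} and Proposition \ref{prop: [1,n,n+1,n+2] irred}), which is consistent with both components having the expected dimension but is not strictly needed for the count.

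**Main obstacle.** The only real subtlety is verifying that the two maps — from $S^{[1,n,n+1]}$ and from $S^{[n,n+1]}$ — are isomorphisms onto their (closed) images rather than merely bijective morphisms; for $W_2$ this is clean because projection gives an explicit inverse morphism, and for $W_1$ it is the functorial identification of $S^{[1,n,n+1]}$ with the incidence locus $\{p\in\xi_n\}$ already invoked earlier in the paper. Everything else is a routine set-theoretic decomposition together with a genericity argument to separate the components; I expect no serious difficulty, and the proof should be short.
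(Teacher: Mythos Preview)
Your proposal is correct and follows essentially the same approach as the paper: exhibit the two maps $S^{[1,n,n+1]}\to Z_{[n,n+1]}$ (image $=\{p\in\xi_n\}$) and $S^{[n,n+1]}\to Z_{[n,n+1]}$ (via the residual point), note each is an isomorphism onto an irreducible closed subset of dimension $2n+2$, and observe that every point of $Z_{[n,n+1]}$ lies in one of the images. The paper's proof is terser---it uses equidimensionality of $Z_{[n,n+1]}$ to conclude immediately that each image is a component, rather than your explicit genericity check that neither is contained in the other---but the content is the same; note also that the irreducibility of $S^{[1,n,n+1]}$ you need is Corollary~\ref{cor: [1,n+1,n+2],[1,n,n+2] irred} (after reindexing), not Proposition~\ref{prop: [1,n,n+1,n+2] irred}.
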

\begin{proof}
	There is an obvious map $S^{[1,n,n+1]} \to Z_{[n,n+1]}$ whose image is the collection of triples $(p,\xi_n,\xi_{n+1})$ with $p \in \xi_n$. This map is an isomorphism onto its image. Since both $Z_{[n,n+1]}$ and $S^{[1,n,n+1]}$ have dimension $2n+2$, the image of this map is an irreducible component. There is another natural map $S^{[n,n+1]} \to Z_{[n,n+1]}$ given by $(\xi_n,\xi_{n+1}) \mapsto (\res(\xi_n,\xi_{n+1}),\xi_n,\xi_{n+1})$. Similarly, this map is an isomorphism onto an irreducible component of $Z_{[n,n+1]}$. To conclude, notice that every element of $Z_{[n,n+1]}$ is in the image of one of these maps.
\end{proof}

Since $Z_{[n,n+1]}$ is reducible, it is unsurprising that $S^{[n,n+1,n+2]}$ is singular, and it suggests an approach to finding a resolution. We obtain a smooth model of $S^{[n,n+1,n+2]}$ by blowing up these irreducible components one at a time. First, we blowup the singular component $W_1 \cong S^{[1,n,n+1]}$ and show that the result is smooth, following the strategy in Section \ref{sec: smoothness of one step}. 
To start that, we need to identify the tangent space of $S^{[n,n+1]}$ at a point. This was done for general nested Hilbert schemes in \cite{CheahThesis}, and we recall the relevant special case below.

\begin{prop}\cite{CheahThesis}
    Suppose $\pic^0(S) = 0$. The tangent space to $S^{[n,n+1]}$ at a pair $(\xi_n,\xi_{n+1})$ is isomorphic to the subspace of $\Ext^1_S(\II_{\xi_n}, \II_{\xi_{n}}) \oplus \Ext^1_S(\II_{\xi_{n+1}}, \II_{\xi_{n+1}})$ consisting of extensions $\eta_n$ and $\eta_{n+1}$ such that the following diagram commutes
    \[
        \begin{tikzcd}
            0 \arrow[r] & \II_{\xi_{n+1}} \arrow[r] \arrow[d] & \eta_{n+1} \arrow[d] \arrow[r] & \II_{\xi_{n+1}} \arrow[d] \arrow[r] & 0 \\
            0 \arrow[r] & \II_{\xi_{n}} \arrow[r] & \eta_{n} \arrow[r] & \II_{\xi_{n}}  \arrow[r] & 0
        \end{tikzcd}
    \]
    Where the outer vertical maps are the natural inclusions, and the middle map is uniquely determined.
\end{prop}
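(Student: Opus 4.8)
The plan is to identify the tangent space $T_{(\xi_n,\xi_{n+1})}S^{[n,n+1]}$ via the functorial description of the nested Hilbert scheme. Since $S^{[n,n+1]}$ is the incidence subscheme of $S^{[n]} \times S^{[n+1]}$, the tangent space at $(\xi_n,\xi_{n+1})$ is the fiber product (over the appropriate map) of the Zariski tangent spaces $T_{\xi_n}S^{[n]}$ and $T_{\xi_{n+1}}S^{[n+1]}$, subject to the condition that the first-order deformations remain nested. First I would use the standard identification $T_{\xi}S^{[n]} = \Hom_S(\II_\xi, \OO_\xi)$ and then, using $\pic^0(S) = 0$, upgrade this to $\Ext^1_S(\II_\xi,\II_\xi)$ via the long exact sequence obtained by applying $\Hom_S(\II_\xi,-)$ to $0 \to \II_\xi \to \OO_S \to \OO_\xi \to 0$: the connecting map $\Hom_S(\II_\xi,\OO_\xi) \to \Ext^1_S(\II_\xi,\II_\xi)$ is an isomorphism because $\Hom_S(\II_\xi,\OO_S) = H^0(\OO_S) = \CC$ maps onto $\Hom_S(\II_\xi,\OO_\xi) \supseteq \CC$ (the scalars) — more precisely one checks $\Ext^1_S(\II_\xi,\OO_S) = H^1(\OO_S)^\vee$-type term vanishes and the $\Hom$ terms cancel the constants. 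This is exactly the identification used implicitly throughout Section~\ref{sec: smoothness of one step}.

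Next I would translate the nesting condition. A first-order deformation of the pair $(\xi_n,\xi_{n+1})$ over $\Spec \CC[\epsilon]$ is a flat family $\Xi_n \subseteq \Xi_{n+1} \subseteq \Spec\CC[\epsilon] \times S$; flatness of each piece individually gives the two tangent vectors $\eta_n \in T_{\xi_n}S^{[n]}$, $\eta_{n+1} \in T_{\xi_{n+1}}S^{[n+1]}$, and the inclusion $\Xi_n \subseteq \Xi_{n+1}$ is the data of compatibility. Under the $\Ext^1$ description, a tangent vector $\eta$ corresponds to the extension $0 \to \II_\xi \to \eta \to \II_\xi \to 0$ which is literally $\II_{\Xi}$ viewed as a sheaf on $S$ (with its $\CC[\epsilon]$-module structure giving the extension), modulo the identification $\II_{\Xi} \otimes_{\CC[\epsilon]} \CC = \II_\xi$ and $\epsilon \cdot \II_{\Xi} \cong \II_\xi$. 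The inclusion $\II_{\Xi_{n+1}} \into \II_{\Xi_n}$ of ideal sheaves on $\Spec\CC[\epsilon]\times S$ then restricts to the natural inclusions $\II_{\xi_{n+1}} \into \II_{\xi_n}$ on the sub and quotient, yielding exactly the commuting diagram in the statement. That the middle vertical map is uniquely determined follows because $\Hom_S(\II_{\xi_{n+1}},\II_{\xi_n})$ is one-dimensional (spanned by the inclusion) by the same $\pic^0 = 0$ vanishing argument, so any map of extensions lifting the identity on sub and quotient is forced; hence no extra data beyond $(\eta_n,\eta_{n+1})$ is needed, only the \emph{existence} of such a middle map, which is the commutativity constraint.

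Conversely, given $(\eta_n,\eta_{n+1})$ fitting into such a diagram, I would run the construction backwards: the middle map $\eta_{n+1}\to\eta_n$ together with the vertical data assembles (using that $-\otimes\II_{Z_{[n]}}$ and $\CC[\epsilon]$-linearity are compatible) into a sheaf inclusion realizing a flat nested family over $\CC[\epsilon]$, hence a tangent vector to $S^{[n,n+1]}$; and these two constructions are mutually inverse. This gives the claimed isomorphism onto the subspace cut out by the diagram-commutativity condition.

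The main obstacle I expect is the bookkeeping in passing between flat families over $\CC[\epsilon]$ and extension classes of ideal sheaves — in particular verifying carefully that (i) flatness of $\Xi_i$ over $\CC[\epsilon]$ is equivalent to the corresponding sequence $0 \to \II_{\xi_i} \to \II_{\Xi_i} \to \II_{\xi_i} \to 0$ being exact (not just a complex), and (ii) the compatibility map of extensions exists if and only if there is a genuine inclusion $\Xi_n \subseteq \Xi_{n+1}$ of the $\CC[\epsilon]$-families, rather than merely a map. Both rely on the hypothesis $\pic^0(S) = 0$ to force the relevant $\Hom$ and $\Ext$ groups between ideal sheaves to be as small as possible; without it one would have to carry around the extra $H^1(\OO_S)$ contributions. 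Since the statement is attributed to \cite{CheahThesis} and is a special case of Cheah's general tangent space computation for nested Hilbert schemes, I would mostly cite that reference and only sketch the identification in the form needed here, emphasizing the uniqueness of the middle map as the point that makes the subspace description clean.
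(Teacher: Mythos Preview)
Your approach matches the paper's: cite Cheah for the description of $T_{(\xi_n,\xi_{n+1})}S^{[n,n+1]}$ as compatible pairs $(\phi_n,\phi_{n+1})\in\Hom_S(\II_{\xi_n},\OO_{\xi_n})\oplus\Hom_S(\II_{\xi_{n+1}},\OO_{\xi_{n+1}})$, then translate to $\Ext^1$ via the connecting homomorphism of $0\to\II_\xi\to\OO_S\to\OO_\xi\to 0$, which is an isomorphism because $\pic^0(S)=0$. One minor correction: the ambiguity in a map of extensions lifting fixed outer maps is precisely $\Hom_S(\II_{\xi_{n+1}},\II_{\xi_n})$, so this group being one-dimensional does not by itself force uniqueness of the middle arrow---but as you correctly observe, only the \emph{existence} of such a map is needed to cut out the subspace, so the identification of the tangent space is unaffected.
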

\begin{proof}
    The tangent space to $S^{[n,n+1]}$ at $(\xi_n,\xi_{n+1})$ is the subspace of $\Hom_S(\II_{\xi_n},\OO_{\xi_n}) \oplus \Hom_S(\II_{\xi_{n+1}}, \OO_{\xi_{n+1}})$ consisting of pairs $(\phi_n,\phi_{n+1})$ such that 
    \[
        \begin{tikzcd}
            \II_{\xi_{n+1}} \arrow[d] \arrow[r, "\phi_{n+1}"] & \OO_{\xi_{n+1}} \arrow[d] \\
            \II_{\xi_n} \arrow[r, "\phi_n"] & \OO_{\xi_n}
        \end{tikzcd}
    \]
    commutes, where the lefthand vertical map is the inclusion and the righthand vertical map is the projection \cite{CheahThesis}. For any ideal $\II$ in $\OO_S$, there is a natural map $\Hom_{\OO_S}(\II,\OO_S/\II) \to \Ext^1_{\OO_S}(\II,\II)$ given by the connecting map in the long exact sequence associated to the exact sequence $0 \to \II \to \OO_S \to \OO_S / \II \to 0$. This map is an isomorphism when $\II$ cuts out a zero-dimensional scheme on a smooth surface $S$ with $\pic^0(S) = 0$. The description of the tangent space in the proposition follows from observing the image of this map.
\end{proof}

\begin{reptheorem}{thm:Resolution}
	Let $Z_{[n,n+1]} = W_1 \cup W_2$ be the irreducible components of $Z_{[n,n+1]}$ with $W_1 \cong S^{[1,n,n+1]}$ and $W_2 \cong S^{[n,n+1]}$. Let $X_1$ be the blowup of $S \times S^{[n,n+1]}$ at $W_1$, and $X_2$ be the blowup of $X_1$ at the proper transform $\overline{W}_2$ of $W_2$. Then $X_1,X_2$ are smooth, and $X_2$ is a resolution of singularities of $S^{[n,n+1,n+2]}$.
\end{reptheorem}

We summarize the statement of the theorem in the diagram below where each of the arrows is given by the indicated blowup.
	\begin{center}
		\begin{tikzcd}
			\mathrm{Bl}_{Z_{[n,n+1]}}(S \times S^{[n,n+1]}) = S^{[n,n+1,n+2]} \arrow[d] & X_2 = \mathrm{Bl}_{\overline{W}_2}X_1 \arrow[l] \arrow[d] \\
			S \times S^{[n,n+1]} & X_1 = \mathrm{Bl}_{W_1}(S \times S^{[n,n+1]}) \arrow[l]
		\end{tikzcd}
	\end{center}

\begin{proof}
	First, we claim that $W_1 \cap W_2$ is of codimension 1 in $W_2$. Indeed, the general element is of the form $(s, \xi_n,\xi_{n+1})$ where $\xi_n$ is reduced and $\xi_{n+1}$ has a double point at $s$. We have $2n$ dimensions from choosing $\xi_n$ plus one dimension from the choice of tangent vector at $s$. Since $W_2 \cap W_1$ is of codimension 1 in $W_2$, we see $\overline{W_2} \cong W_2 \cong S^{[n,n+1]}$ is smooth. Thus, it suffices to show that the first blowup is smooth. To do this, we mimic Tikhomirov's proof of smoothness of $S^{[n,n+1]}$
	
	Note that $W_1 \cong S^{[1,n,n+1]}$ is Cohen-Macaulay since it admits a finite, flat map $\phi_{1}: S^{[1,n,n+1]} \to S^{[n,n+1]}$ to a smooth variety. We use Proposition \ref{prop:SmoothnessCriterion} to prove the smoothness of $X_1$. In fact, we will prove that for each $x = (s,\xi_n,\xi_{n+1}) \in S \times S^{[n,n+1]}$, the RKS $\theta_x : T_x(S \times S^{[n,n+1]}) \to \Ext^1_X(\II_{W_1} \otimes \Hom_X(\II_{W_1},k(x)),k(x))$ map is surjective. Using the evident modification of the commutative diagram of Proposition \ref{prop:AppendixCD}, it suffices to show that $T_{(\xi_n,\xi_{n+1})}S^{[n,n+1]}$ surjects onto the vector space $V \subseteq \bigoplus_{i=n}^{n+1} \Ext(\II_{\xi_i}, \II_{\xi_i} \otimes k(p))$ of extensions which are compatible in the sense that there is a commutative diagram
	\[
	    \begin{tikzcd}
            0 \arrow[r] & \II_{\xi_{n+1}} \otimes k(s) \arrow[r] \arrow[d] & \eta_{n+1} \arrow[d, "\gamma"] \arrow[r] & \II_{\xi_{n+1}} \arrow[d] \arrow[r] & 0 \\
            0 \arrow[r] & \II_{\xi_{n}} \otimes k(s) \arrow[r] & \eta_{n} \arrow[r] & \II_{\xi_{n}}  \arrow[r] &     0
        \end{tikzcd}.
	\]
	For any such pair of compatible extensions, there is an extension $0 \to \II_{\xi_n} \to E_n \to \II_{\xi_n} \to 0$ which maps to the extension $\eta_n$ under the map induced by the restriction $\II_{\xi_{n}} \to \II_{\xi_n} \otimes k(s)$. Let $\phi_n : E_n \to \eta_n$ be the induced map. Then let $E_{n+1} = \phi_n^{-1}(\gamma(\eta_{n+1}))$. It is clear that $\II_{\xi_{n+1}} \subseteq E_{n+1}$, and a straightforward diagram chase shows that the cokernel is $\II_{\xi_{n+1}}$ as required. That is, the RKS map is surjective. Thus, $X_1$ is smooth by Proposition \ref{prop:SmoothnessCriterion}.
	\end{proof}

\begin{rem}
    We can give a more geometric description of the spaces $X_1,X_2$ from the theorem above. The blowup $\mathrm{Bl}_{W_1}(S \times S^{[n,n+1]})$ is isomorphic to the variety
    \[
        S^{[n,(n+1)^2]} = \{(\xi_n,\xi_{n+1},\xi'_{n+1}) : \xi_n \subseteq \xi_{n+1} \text{ and } \xi_n \subseteq \xi'_{n+1}\} \subseteq S^{[n]} \times S^{[n+1]} \times S^{[n+1]}.
    \]
    An alternate approach to the smoothness of $X_1 = S^{[n,(n+1)^2]}$ is to mimic the approach of \cite{CheahThesis}. That is, one assumes that $S = \PP^2$. It then suffices to establish smoothness at the Borel fixed points, which are subschemes defined by monomial ideals. We settled on our approach to emphasize the paper \cite{Tik} and the connections with \cite{JiangLeung}. 
    
    The vareity $X_2$ surjects onto $S^{[n,n+1,n+2]}$. Thus, to describe $X_2$ as a variety parameterizing certain collections of subschemes of $S$, each point of $X_2$ should determine a length $n+2$ subscheme. Note that the proper transform of $W_2 \cong S^{[n,n+1]}$ in $S^{[n,(n+1)^2]}$ is the locus triples $(\xi_n,\xi_{n+1},\xi'_{n+1})$ such that $\xi_{n+1} = \xi'_{n+1}$. So away from $\overline{W_2}$, the subscheme $\xi_{n+2}$ is easily determined. Indeed if $(\xi_n,\xi_{n+1},\xi_{n+1}') \notin \overline{W_2}$, then $\II_{\xi_{n+1}} \cap \II_{\xi_{n+1}'}$ is an ideal of colength $n+2$. On the other hand, if $\xi = (\xi_n,\xi_{n+1},\xi_{n+1}) \in \overline{W_2}$, the fiber of the blowup $X_2 \to X_1$ is a $\PP^1$. Each point in $\PP^1$ gives a vector $[v]$ in the projectivized tangent space to $S$ at $\res(\xi_n,\xi_{n+1})$, and we obtain the $(n+2)$-scheme by taking the flat limit of a point colliding with $\xi_{n+1}$ along the tangent vector $v$.

\end{rem}

\subsection{Singularities}\label{subsec:Singularities}
We can use the resolution of singularities of $S^{[n,n+1,n+2]}$ to study the singularities of it and other related nested Hilbert schemes.

\begin{prop}\label{prop:TwoStepIsLCI}
    The two-step nested Hilbert scheme $S^{[n,n+1,n+2]}$ is lci.
\end{prop}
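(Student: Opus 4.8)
The plan is to realize $S^{[n,n+1,n+2]}$ directly as the zero scheme of a section of a vector bundle on a smooth variety, with the rank of that bundle equal to the codimension of the zero scheme — which is exactly the definition of a local complete intersection. Recall that $S^{[n,n+1,n+2]} = \PP(\II_{Z_{[n,n+1]}})$ is the projectivization of the ideal sheaf $\II_W$, where $W := Z_{[n,n+1]} \subseteq X := S \times S^{[n,n+1]}$, and that $X$ is smooth by Theorem~\ref{thm:OneStepIsSmooth}. Since being lci is Zariski-local on the target of $\PP(\II_W) \to X$, I would fix an affine open $U \subseteq X$ and work there. By Lemma~\ref{lem: codim two}, $\OO_W$ has local projective dimension two over $\OO_X$, so by Auslander--Buchsbaum $\II_W|_U$ has a finite free resolution of length one,
\[
    0 \to E \fto{u} F \to \II_W|_U \to 0,
\]
with $E,F$ free; since $\II_W$ has generic rank one, $\rk F = \rk E + 1 =: r$.

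First I would build the ambient space. The surjection $F \twoheadrightarrow \II_W|_U$ induces a surjection of symmetric algebras, hence a closed embedding $\PP(\II_W|_U) \hookrightarrow P := \PP(F) = \mathrm{Proj}_U\,\mathrm{Sym}(F)$, where $P$ is smooth of dimension $\dim U + r - 1$. Writing $\pi : P \to U$ for the projection and $\pi^*F \twoheadrightarrow \OO_P(1)$ for the tautological quotient, the composite $\sigma : \pi^*E \fto{\pi^*u} \pi^*F \twoheadrightarrow \OO_P(1)$ is a global section of the rank-$(r-1)$ bundle $(\pi^*E)^\vee \otimes \OO_P(1)$. Its zero scheme is exactly $\PP(\II_W|_U)$: at a point $(x,[q])$ of $P$, with $q : F_x \twoheadrightarrow k(x)$, the section $\sigma$ vanishes iff $q$ annihilates $u(E_x)$, i.e.\ iff $q$ factors through $F_x/u(E_x) \cong \II_W \otimes k(x)$, and this matches scheme-theoretically because the kernel of $\mathrm{Sym}(F) \to \mathrm{Sym}(\II_W|_U)$ is generated in degree one by $u(E) \subseteq F$.

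Finally I would pin down the codimension. Being the zero scheme of a section of a rank-$(r-1)$ bundle, $\PP(\II_W|_U)$ has codimension at most $r-1$ in $P$ at every point. On the other hand $\PP(\II_W|_U)$ is a nonempty open subset of $\PP(\II_W) = S^{[n,n+1,n+2]}$, hence irreducible of dimension $2n+4 = \dim X$ by Theorem~\ref{thm:TwoStepIrreducible}, so it has codimension exactly $(\dim U + r - 1) - (2n+4) = r - 1$ in $P$. Thus, locally, $\PP(\II_W|_U)$ is cut out of the regular scheme $P$ by $r-1$ functions (the components of $\sigma$ in a trivialization) generating an ideal of height $r-1$; in a Cohen--Macaulay local ring such a sequence is regular, so $\PP(\II_W|_U) \hookrightarrow P$ is a regular embedding into a smooth variety, i.e.\ lci. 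Letting $U$ range over an affine cover of $X$ gives that $S^{[n,n+1,n+2]}$ is lci. The one step that genuinely uses the geometry is the codimension computation, which rests on the irreducibility and dimension from Theorem~\ref{thm:TwoStepIrreducible}; everything else is formal once one has the length-two locally free resolution supplied by Lemma~\ref{lem: codim two}, so I expect no real obstacle beyond keeping the conventions for $\PP(\cdot)$ straight.
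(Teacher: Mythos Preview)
Your proof is correct and follows essentially the same route as the paper: embed $\PP(\II_W)$ locally into the projective bundle $\PP(F)$ using the length-two locally free resolution $0\to E\to F\to \II_W\to 0$ supplied by Lemma~\ref{lem: codim two}, identify it as the zero locus of the composite $\sigma:\pi^*E\to\OO(1)$, and then invoke Theorem~\ref{thm:TwoStepIrreducible} to see the codimension equals $\rk E$. The only cosmetic difference is that the paper localizes at a point of $Z_{[n,n+1]}$ rather than restricting to an affine open, and you are a bit more explicit about why the zero scheme of $\sigma$ agrees scheme-theoretically with $\PP(\II_W|_U)$ and why the defining sequence is regular.
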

\begin{proof}
    First, note that $S^{[n,n+1,n+2]}$ is smooth away from the exceptional locus of the map $\PP(\II_{Z_{[n,n+1]}}) : S^{[n,n+1,n+2]} \to S \times S^{[n,n+1]}$. Thus, it suffices to show that for each $\zeta \in S \times S^{[n,n+1]}$, the scheme $\PP(\II_{Z_{[n,n+1]},\zeta})$ is Gorenstein.
    Recall that $Z_{[n,n+1]}$ is a codimension two Cohen-Macaulay subscheme of the smooth variety $S \times S^{[n,n+1]}$. So for any $\zeta = (p,\xi_n,\xi_{n+1}) \in Z_{[n,n+1]}$, the local ring $\OO_{Z_{[n,n+1]}, \zeta}$ is a Cohen-Macaulay module over the regular ring $R = \OO_{S\times S^{[n,n+1]},\zeta}$. So we have an exact sequence
    \begin{center}
        \begin{tikzcd}
            0 \arrow[r] & \bigoplus^{b_2} \arrow[r, "M"] R & \bigoplus^{b_1} R \arrow[r] & \II_{Z_{[n,n+1]}, \zeta} \arrow[r] & 0
        \end{tikzcd}
    \end{center}
    Note that $b_1 = b_2 + 1$. This sequence presents $\PP(\II_{Z_{[n,n+1]},\zeta})$ as a subscheme of $\PP^{b_1-1}_R$. Pulling back $M$ along the projection map $\pi : \PP^{b_1-1}_R \to \Spec{R}$ and composing with the tautological quotient, we get the following diagram
    \begin{center}
        \begin{tikzcd}
            \bigoplus^{b_2} \OO_{\PP^{b_1-1}_R} \arrow[dr,"\sigma"] \arrow[r, "\pi^*M"] & \bigoplus^{b_1} \OO_{\PP^{b_1-1}_R} \arrow[d] \\
            & \OO_{\PP^{b_1-1}_R}(1).
        \end{tikzcd}
    \end{center}
    Then $\PP(\II_{Z_{[n,n+1]}})$ is scheme-theoretically defined by the vanishing of $\sigma$ so is cut out by $b_2 = b_1-1$ equations. By Theorem \ref{thm:TwoStepIrreducible}, $\PP(\II_{Z_{[n,n+1]}}) = S^{[n,n+1,n+2]}$ is irreducible of dimension $2n+4$ so has codimension $b_1-1$ in $\PP^{b_1-1}_R$. Thus, $S^{[n,n+1,n+2]}$ is lci.
\end{proof}

Similarly, the resolution of $S^{[n,n+1,n+2]}$ immediately implies that it is log terminal.

\begin{cor}\label{cor:TwoStepRational}
    The two-step nested Hilbert scheme $S^{[n,n+1,n+2]}$ is log terminal.
\end{cor}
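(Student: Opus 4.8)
The plan is to combine the lci property (Proposition~\ref{prop:TwoStepIsLCI}) with the explicit resolution $f\colon X_2 \to S^{[n,n+1,n+2]}$ of Theorem~\ref{thm:Resolution}. A local complete intersection is Gorenstein, in particular Cohen--Macaulay; it is also normal, being $S_2$ and $R_1$: its singular locus lies in the preimage under $\PP(\II_{Z_{[n,n+1]}}) \to S\times S^{[n,n+1]}$ of $\bigcup_{i\ge 3}W_{i,[n,n+1]}$, since over $W_{2,[n,n+1]}$ the center $Z_{[n,n+1]}$ is a codimension-two complete intersection and the blowup is smooth there, and a dimension count mirroring Lemma~\ref{lem: 2 step dim} puts this locus in codimension at least two. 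By the background discussion, a Gorenstein variety with rational singularities is log terminal, so it is enough to show that $S^{[n,n+1,n+2]}$ has rational singularities. Since it is Cohen--Macaulay, the characterization recalled via \cite{Kovacs} reduces this to $f_*\omega_{X_2}\cong\omega_{S^{[n,n+1,n+2]}}$, i.e.\ to showing that $f$ is crepant: $K_{X_2} = f^*K_{S^{[n,n+1,n+2]}}$.

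First I would compute $K_{X_2}$. By construction $X_2$ is obtained from the smooth variety $Y_0 := S\times S^{[n,n+1]}$ by two successive blowups along smooth codimension-two centers: $\mu_1\colon X_1\to Y_0$ along $W_1\cong S^{[1,n,n+1]}$ (exceptional divisor $E_1$), then $\mu_2\colon X_2\to X_1$ along the proper transform $\overline{W}_2$ — which is smooth because $W_1\cap W_2$ has codimension one in $W_2$, as in the proof of Theorem~\ref{thm:Resolution} — (exceptional divisor $E_2$). Since $\overline{W}_2\not\subseteq E_1$, the pullback $\mu_2^*E_1$ has no $E_2$-component, so with $\mu := \mu_1\circ\mu_2$ and $\widetilde{E}_1$ the proper transform of $E_1$ one gets
\[
    K_{X_2} = \mu_2^*\bigl(\mu_1^*K_{Y_0} + E_1\bigr) + E_2 = \mu^*K_{Y_0} + \widetilde{E}_1 + E_2.
\]

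The main task is then to match this against the blowup $g\colon S^{[n,n+1,n+2]}\to Y_0$ of the reducible Cohen--Macaulay codimension-two center $Z_{[n,n+1]} = W_1\cup W_2$, where $g\circ f = \mu$. One needs to (i) compute $\omega_{S^{[n,n+1,n+2]}}\cong g^*\omega_{Y_0}\otimes\OO(\mathcal{E})$ for the correct exceptional divisor $\mathcal{E}$ of $g$ — using the lci structure (so $\omega$ is a line bundle, obtainable locally by adjunction from the presentation of $\PP(\II_{Z_{[n,n+1]}})$ as a complete intersection in a projective space over $Y_0$ that appears in the proof of Proposition~\ref{prop:TwoStepIsLCI}) together with the local geometry of $Z_{[n,n+1]}$, which away from a codimension $\ge 3$ locus is just two transverse smooth sheets — and (ii) verify $f^*\mathcal{E} = \widetilde{E}_1 + E_2$, i.e.\ that $f$ contracts no divisor beyond these two and that the coefficients agree. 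Then $K_{X_2} = f^*K_{S^{[n,n+1,n+2]}}$, so $f$ is crepant and $S^{[n,n+1,n+2]}$ has canonical — a fortiori log terminal — singularities.

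I expect steps (i)--(ii) to be the real obstacle: tracking the canonical sheaf through the singular, reducible blowup $\mathrm{Bl}_{Z_{[n,n+1]}}(Y_0)$ (where the naive ``codimension $-1$'' discrepancy formula does not apply as stated) and checking that $f$ introduces no extra discrepancy. A possible shortcut is Elkik's theorem that for local complete intersections ``rational'' is equivalent to ``canonical'' \cite{Elkik}, whereupon it would suffice to prove $R^if_*\OO_{X_2}=0$ for $i>0$ by pushing forward through the two smooth blowups (where the higher direct images of $\OO$ vanish) and using the Leray spectral sequence for $g\circ f$ — though this still requires understanding the local structure of $Z_{[n,n+1]}$ and the fibers of $g$.
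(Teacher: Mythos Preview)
Your approach heads in the right direction but misses the observation that collapses the whole computation: the resolution $f\colon X_2\to S^{[n,n+1,n+2]}$ is \emph{small}. The exceptional divisor of $g\colon S^{[n,n+1,n+2]}\to Y_0$ has two irreducible components $\mathcal{E}_1,\mathcal{E}_2$, lying over $W_1$ and $W_2$ respectively (each with generic fiber $\PP^1$, since the ideal of $Z_{[n,n+1]}$ needs two generators at a generic point of either $W_i$). Under $f$, your divisors $\widetilde{E}_1$ and $E_2$ map \emph{birationally} onto $\mathcal{E}_1$ and $\mathcal{E}_2$; they are not $f$-exceptional at all. Hence $\mathrm{Exc}(f)$ has codimension $\ge 2$. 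For a small birational morphism from a smooth variety to a Gorenstein one, $K_{X_2}$ and $f^*K_{S^{[n,n+1,n+2]}}$ are Cartier divisors agreeing outside a codimension-$\ge 2$ set, so they coincide; crepancy is automatic, and taking any further log resolution shows the singularities are canonical, hence log terminal. This is exactly the paper's one-line argument. The ``real obstacle'' you flag in steps (i)--(ii) --- computing $\omega$ for the blowup along a reducible center and matching coefficients --- simply does not arise.

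One small correction: your claim that the blowup is smooth over all of $W_{2,[n,n+1]}$ is not right. Over a generic point of $W_1\cap W_2$ the ideal of $Z_{[n,n+1]}$ still needs only two generators (locally it is $(a,bc)$ with $W_1=V(a,b)$, $W_2=V(a,c)$), yet the blowup acquires an ordinary double point there --- a complete-intersection center that is itself singular gives a singular blowup. This does not harm your normality claim (the singular locus still has codimension $\ge 2$, and lci gives $S_2$), but the justification you wrote is off.
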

\begin{proof}
    Recall that $S^{[n,n+1,n+2]}$ and its resolution $\widetilde{S^{[n,n+1,n+2]}}$ are constructed as follows
    \begin{center}
        \begin{tikzcd}
            S^{[n,n+1,n+2]} = \mathrm{Bl}_{Z_{[n,n+1]}} S \times S^{[n,n+1,n+2]} \arrow[d] & \arrow[l] \widetilde{S^{[n,n+1,n+2]}} = \mathrm{Bl}_{\overline{W_2}} X_1 \arrow[d] \\
            S \times S^{[n,n+1]}  & X_1 = \mathrm{Bl}_{W_1}(S \times S^{[n,n+1]}) \arrow[l]
        \end{tikzcd}
    \end{center}
    where the arrows are blowups and the $W_i$'s are the irreducible components of $Z_{[n,n+1]}$. Hence $\widetilde{S^{[n,n+1,n+2]}}$ is a small resolution. Since $S^{[n,n+1,n+2]}$ is Gorenstein, this implies it is log terminal.
\end{proof}

Since the map $\phi_1 : S^{[1,n,n+1,n+2]} \to S^{[n,n+1,n+2]}$ is finite and flat and the target is Cohen-Macaulay, we obtain the following as an immediate corollary.

\begin{cor}
    The nested Hilbert scheme $S^{[1,n,n+1,n+2]}$ is Cohen-Macaulay.
\end{cor}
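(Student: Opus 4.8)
The plan is to run exactly the argument indicated in the sentence preceding the corollary: realize $\phi_1 : S^{[1,n,n+1,n+2]} \to S^{[n,n+1,n+2]}$ as a finite flat morphism onto a Cohen--Macaulay base, and then invoke the fact that Cohen--Macaulayness ascends along flat maps with Cohen--Macaulay fibers. The first ingredient is free: by Proposition \ref{prop:TwoStepIsLCI} the target $S^{[n,n+1,n+2]}$ is a local complete intersection, hence Cohen--Macaulay (alternatively one may cite Corollary \ref{cor:TwoStepRational}, since log terminal singularities are rational and in particular Cohen--Macaulay).

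Next I would verify that $\phi_1$ is finite and flat. The key observation is that the datum of a nesting $(\xi_1,\xi_n,\xi_{n+1},\xi_{n+2})\in S^{[1,n,n+1,n+2]}$ is equivalent to the datum of a point $(\xi_n,\xi_{n+1},\xi_{n+2})\in S^{[n,n+1,n+2]}$ together with a length-one subscheme $\xi_1\subseteq\xi_n$; at the level of functors of nested flat families this says precisely that $S^{[1,n,n+1,n+2]}$ is the scheme-theoretic fiber product of $\pi_n : S^{[n,n+1,n+2]} \to S^{[n]}$ with the universal family $S^{[1,n]} = Z_{[n]} \to S^{[n]}$, and that $\phi_1$ is the base change of the latter map. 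Since the universal family $Z_{[n]} \to S^{[n]}$ over the Hilbert scheme of points on a smooth surface is finite and flat of degree $n$, its base change $\phi_1$ is finite and flat (of degree $n$) as well.

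Finally I would conclude. A finite morphism has zero-dimensional fibers, and a zero-dimensional Noetherian local ring is automatically Cohen--Macaulay; thus $\phi_1$ is a flat morphism all of whose fibers are Cohen--Macaulay and whose base is Cohen--Macaulay. By the standard ascent of the Cohen--Macaulay property along flat local homomorphisms with Cohen--Macaulay closed fiber (\cite[Corollary to Theorem 23.3]{Matsumura}, which is already used in this paper), every local ring of $S^{[1,n,n+1,n+2]}$ is Cohen--Macaulay, so $S^{[1,n,n+1,n+2]}$ is Cohen--Macaulay. I do not expect a genuine obstacle here: the only step requiring a moment's care is the flatness of $\phi_1$, which follows formally once it is identified as a base change of the universal family over $S^{[n]}$; everything else is citation of results established earlier in the paper or standard commutative algebra.
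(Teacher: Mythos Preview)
Your proposal is correct and follows exactly the approach the paper indicates: the paper states the corollary as an immediate consequence of $\phi_1$ being finite flat onto the Cohen--Macaulay target $S^{[n,n+1,n+2]}$, and you have simply supplied the details (identifying $\phi_1$ as a base change of the universal family and invoking \cite[Corollary to Theorem~23.3]{Matsumura}). There is nothing to add.
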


From our description of $S^{[n,n+1,n+2]}$ as a local complete intersection in a smooth variety, we obtain a Koszul complex that implies that $S^{[1,n+1,n+2]}$ has rational singularities.

\begin{cor}\label{cor:S[1,n,n+1]RationalSings}
    For $n \geq 2$, the nested Hilbert scheme $S^{[1,n,n+1]}$ has rational singularities.
\end{cor}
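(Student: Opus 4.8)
The plan is to exploit the local complete intersection structure of $S^{[n,n+1,n+2]}$ established in Proposition \ref{prop:TwoStepIsLCI} together with the reducibility of the universal family $Z_{[n,n+1,n+2]}$ from Proposition \ref{prop:Zn,n+1,n+2IsReducible}. Recall from that proposition that $Z_{[n,n+1,n+2]}$ is a union of three components, one of which is isomorphic to $S^{[1,n,n+1,n+2]}$; and that $S^{[1,n,n+1,n+2]}$ surjects onto $S^{[1,n+1,n+2]}$ via a generically finite forgetful map (Corollary \ref{cor: [1,n+1,n+2],[1,n,n+2] irred}). The cleanest route, however, is to realize $S^{[1,n,n+1]}$ directly: by Proposition \ref{prop:Zn,n+1IsReducible}, $W_1 \cong S^{[1,n,n+1]}$ is one irreducible component of the codimension-two Cohen-Macaulay subscheme $Z_{[n,n+1]} \subseteq S \times S^{[n,n+1]}$, and $S^{[n,n+1,n+2]} = \PP(\II_{Z_{[n,n+1]}})$ is a blowup whose exceptional structure sees $W_1$. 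Since the ambient variety $S \times S^{[n,n+1]}$ is smooth and $Z_{[n,n+1]}$ is Cohen-Macaulay of codimension two, $W_1$ is cut out locally by few equations; the goal is to extract a short resolution of $\OO_{W_1}$ (or equivalently $\II_{W_1}$) over the regular local ring of $S \times S^{[n,n+1]}$ and assemble a Koszul-type complex witnessing rationality.

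The key steps, in order: First, I would invoke the resolution $X_1 = \mathrm{Bl}_{W_1}(S\times S^{[n,n+1]})$ of Theorem \ref{thm:Resolution}, which is \emph{smooth} and maps birationally onto $W_1$'s blowup; more usefully, $X_1 \to S\times S^{[n,n+1]}$ restricts over $W_1$ to a $\PP^{i-1}$-bundle-type exceptional divisor. Second, I would use Proposition \ref{prop:generalIrreducibilityCriterion}(2) and the codimension estimates for the loci $W^n_{i,[n,n+1]}$ from the proof of Theorem \ref{thm:TwoStepIrreducible}(b) to conclude that the exceptional divisor $E_1 \subseteq X_1$ over $W_1$ is irreducible and that $X_1 \to W_1$ restricted to $E_1$ is birational onto $W_1$ — in fact this exhibits a resolution of singularities $E_1 \to W_1 \cong S^{[1,n,n+1]}$. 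Third, having a resolution $f: E_1 \to S^{[1,n,n+1]}$ with $E_1$ smooth, I would show $f$ is a \emph{small} resolution (or at least that $R^i f_* \OO_{E_1} = 0$ for $i > 0$ and $f_*\OO_{E_1} = \OO_{S^{[1,n,n+1]}}$), which gives $f_*\omega_{E_1} = \omega_{S^{[1,n,n+1]}}$; combined with the fact (from the corollary immediately above) that $S^{[1,n,n+1]}$ is Cohen-Macaulay, the characterization of rational singularities in the Background section (via \cite{Kovacs}) finishes the argument. The hypothesis $n \geq 2$ enters precisely in the codimension bounds: for $n = 1$ the relevant locus $W^n_{i,[n,n+1]}$ is too small, and $S^{[1,1,2]}$ needs separate treatment or is excluded.

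An alternative, and perhaps what the authors intend given the phrasing "from our description of $S^{[n,n+1,n+2]}$ as a local complete intersection": realize $S^{[1,n,n+1]}$ as (a component of, or birational image inside) the exceptional locus of the lci map $\PP(\II_{Z_{[n,n+1]}}) \to S\times S^{[n,n+1]}$, write down the explicit Koszul complex from the $b_2 = b_1 - 1$ defining equations $\sigma$ of Proposition \ref{prop:TwoStepIsLCI}, restrict it along $W_1$, and check that the resulting complex is a (relative) resolution computing $R\pi_*$ of the structure sheaf of the preimage of $W_1$; vanishing of higher cohomology of line bundles on projective space then forces $R^i f_* \OO = 0$. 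One then identifies $f_*\OO$ with $\OO_{S^{[1,n,n+1]}}$ using normality of $S^{[1,n,n+1]}$ (which follows from Cohen-Macaulay plus regularity in codimension one, the latter from Tikhomirov's smoothness of $S^{[n,n+1]}$ and the generic fiber analysis).

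The main obstacle I anticipate is the third step: proving the higher direct images vanish. Establishing that the resolution is small — equivalently, bounding the fiber dimensions of $E_1 \to S^{[1,n,n+1]}$ so that no fiber has dimension $\geq 1$ over a locus of the wrong codimension — requires precisely the sharpened estimates on $W^n_{i,[n,n+1]}$ and a careful analysis of how the $\PP^{i-1}$-fibers of the projectivization sit over $W_1$. If the resolution turns out not to be small, one instead needs a Grothendieck-vanishing argument fiberwise ($R^i\pi_* \OO_{\PP^{m}}(k) = 0$ for $0 < i < m$ or $k > -m-1$) applied to the restricted Koszul complex, and the bookkeeping there — tracking the twists appearing in the Koszul differentials of $\sigma$ and confirming they land in the vanishing range — is the delicate part. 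Everything else (Cohen-Macaulayness, normality, the Kovács criterion) is either already proved in the excerpt or standard.
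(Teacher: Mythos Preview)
Your main approach contains a concrete geometric error. You claim that the exceptional divisor $E_1$ of $X_1 = \mathrm{Bl}_{W_1}(S\times S^{[n,n+1]})$ maps birationally onto $W_1 \cong S^{[1,n,n+1]}$, and that this gives a (small) resolution of singularities. But $W_1$ has codimension two in $S\times S^{[n,n+1]}$, so over a general point $(p,\xi_n,\xi_{n+1})\in W_1$ (with $\xi_n$ reduced at $p$) the fiber of $E_1\to W_1$ is $\PP(\II_{\xi_n}\otimes k(p))\cong \PP^1$. Thus $E_1\to W_1$ is \emph{not} birational, hence not a resolution of singularities at all, let alone a small one. Your alternative---using the preimage of $W_1$ inside $S^{[n,n+1,n+2]}=\PP(\II_{Z_{[n,n+1]}})$---has the same defect: the fiber over a general point of $W_1$ is $\PP(\II_{\xi_{n+1}}\otimes k(p))\cong\PP^1$. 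Your explanation of the hypothesis $n\ge 2$ via codimension bounds on $W^n_{i,[n,n+1]}$ is also off the mark.

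The paper's argument is structurally different and you missed its key idea: it does not work with $E_1$ or with any piece of $S^{[n,n+1,n+2]}$, but instead \emph{shifts the index down} and uses $S^{[n-1,n,n+1]}$. The map
\[
\rho:S^{[n-1,n,n+1]}\to S^{[1,n,n+1]},\qquad (\xi_{n-1},\xi_n,\xi_{n+1})\mapsto(\res(\xi_{n-1},\xi_n),\xi_n,\xi_{n+1})
\]
is genuinely birational (its fiber over $(p,\xi_n,\xi_{n+1})$ is the projectivized socle $\PP(\Hom(k(p),\OO_{\xi_n}))$, which is a point when $\xi_n$ is reduced at $p$), and the source has rational singularities by Corollary~\ref{cor:TwoStepRational} applied with $n$ replaced by $n-1$; this is exactly why $n\ge 2$ is needed. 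The Koszul complex coming from the lci presentation of $S^{[n-1,n,n+1]}$ in a projective bundle (as in Proposition~\ref{prop:TwoStepIsLCI}) is then used to force $R^i\rho_*\OO=0$ for $i>0$. Your fallback ``Grothendieck-vanishing on $\PP^m$-fibers'' idea could perhaps be pushed through for a non-birational $f:E_1\to W_1$ via Kov\'acs's splitting criterion, but you would first need $E_1$ to be smooth or to have rational singularities, which you have not established (and which is not obvious, since $E_1$ is only a Cartier divisor in $X_1$); in any case that is not what the paper does.
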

\begin{proof}
    Since $S^{[n-1,n,n+1]}$ has rational singularities, it suffices to show that the map $\rho : S^{[n-1,n,n+1]} \to S^{[1,n,n+1]}$ given by 
    \[
        \rho(\xi_{n-1},\xi_n,\xi_{n+1}) = \left( \res(\xi_{n-1},\xi_n), \xi_n,\xi_{n+1} \right)
    \]
    satisfies $R^i\rho_*\OO_{S^{[n-1,n,n+1]}} = 0$ for $i > 0$. As in the proof of Proposition \ref{prop:TwoStepIsLCI}, we have an exact sequence
    \[
        0 \to E \fto{u} F \to \II_{Z_{[n-1,n]}} \to 0
    \]
    where $E,F$ are vector bundles. Then $S^{[n-1,n,n+1]} = \PP(\II_{Z_{[n-1,n]}})$ is the zero-subscheme of the map $pi_F^*E \to \OO_{\PP (F)}(1)$. We get a Koszul complex
    \[
        \cdots \to \bigwedge^2 \pi^* E(-1) \to \OO_{\PP(F)} \to \OO_{S^{[n-1,n,n+1]}} \to 0
    \]
    which implies the requisite vanishing.
\end{proof}

We also use Riemann-Hurwitz to get the following.
\begin{cor}\label{cor: S[1,n,n+1]Not Q Gorenstein}  
    $S^{[1,n,n+1]}$ is not $\QQ$-Gorenstein.
\end{cor}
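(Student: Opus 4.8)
The plan is to use the forgetful morphism $\phi_1 \colon S^{[1,n,n+1]} \to S^{[n,n+1]}$, which is finite and flat of degree $n$ (it is the pullback along $\phi_{n+1}\colon S^{[n,n+1]}\to S^{[n]}$ of the universal family $S^{[1,n]}\to S^{[n]}$), onto the \emph{smooth} variety $S^{[n,n+1]}$ (Theorem \ref{thm:OneStepIsSmooth}). Write $X := S^{[1,n,n+1]}$ and $Y := S^{[n,n+1]}$. By Corollary \ref{cor:S[1,n,n+1]RationalSings}, $X$ has rational singularities, hence is normal and Cohen--Macaulay; in particular $\omega_X$ is a rank-one reflexive sheaf, and $X$ is $\QQ$-Gorenstein precisely when some reflexive power $\omega_X^{[m]}$ is a line bundle. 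I would assume for contradiction that it is.

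On the smooth locus $X^{\circ}\subseteq X$ the restriction $\phi_1|_{X^{\circ}}\colon X^{\circ}\to Y$ is a dominant, generically étale morphism of smooth varieties, so Riemann--Hurwitz gives $\omega_{X^{\circ}} \cong \phi_1^{*}\omega_Y|_{X^{\circ}} \otimes \OO_{X^{\circ}}(R^{\circ})$, where $R^{\circ}$ is the (reduced, since we are in characteristic zero) ramification divisor; its support is the locus of triples $(\xi_1,\xi_n,\xi_{n+1})$ in which $\xi_1$ sits at a non-reduced point of $\xi_n$. Since $X$ is normal and $S_2$, pushing forward along the open immersion $j\colon X^{\circ}\hookrightarrow X$ (with complement of codimension $\geq 2$) yields an isomorphism of reflexive sheaves $\omega_X \cong \phi_1^{*}\omega_Y \otimes \OO_X(R)$, where $R := \overline{R^{\circ}}$. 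Thus $\omega_X$ is $\QQ$-Cartier if and only if the Weil divisor $R$ is, and in any case we now have an explicit formula for $\omega_X$ as a reflexive sheaf.

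The heart of the argument is then a local computation at a general point $\zeta$ of the relevant singular stratum of $X$. For $n\geq 3$ there is a stratum along which $\xi_n$ carries a planar fat point at $\xi_1$ (its ideal contains $\mathfrak{m}_{\xi_1}^{2}$ and requires three local generators there); comparing with the Gorenstein model $S^{[n-1,n,n+1]}$ and with the blowup $X_1 = \mathrm{Bl}_{W_1}(S\times S^{[n,n+1]})$ of Theorem \ref{thm:Resolution} — whose exceptional fibres record the extra flag directions collapsed in $X$ — one should find that the transverse singularity type of $X$ along this stratum is the affine cone over a product of projective spaces $\PP^{a}\times\PP^{b}$ (with $a,b\geq 1$ and $(a,b)\neq(1,1)$) in a Segre-type embedding, or an analogous non-$\QQ$-Gorenstein cone. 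For such a cone the canonical class is not a rational multiple of the hyperplane class, so by the formula of the previous paragraph the restriction of $\omega_X$ to $\OO_{X,\zeta}$ represents a non-torsion element of the local divisor class group $\mathrm{Cl}(\OO_{X,\zeta})$. This contradicts $\omega_X^{[m]}$ being invertible, proving the corollary.

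The main obstacle is the last step: pinning down, precisely enough to compute the local class group, the étale-local (or analytic) structure of $S^{[1,n,n+1]}$ along its singular locus, and checking that the Riemann--Hurwitz contribution $R$ does not conspire with $\phi_1^{*}\omega_Y$ to become $\QQ$-Cartier after all. By contrast, the finiteness, flatness, and degree of $\phi_1$, the normality and Cohen--Macaulayness of $X$, and the Riemann--Hurwitz identity on $X^{\circ}$ are routine given the results already established; in particular the restriction $n\geq 3$ is forced, since for $n=2$ the scheme $S^{[1,2,3]}$ coincides with $S^{[m,m+1,m+2]}$ for $m=1$ and is therefore Gorenstein by Proposition \ref{prop:TwoStepIsLCI}.
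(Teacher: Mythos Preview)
Your Riemann--Hurwitz set-up for the finite flat map $\phi_1\colon S^{[1,n,n+1]}\to S^{[n,n+1]}$ is exactly what the paper uses, and the reduction to showing that the ramification divisor $R$ is not $\QQ$-Cartier is correct. The gap is precisely where you flag it: you never actually determine $R$, and the proposed ``cone over $\PP^a\times\PP^b$'' local model is speculative. (Also, your claim that $R$ is reduced ``since we are in characteristic zero'' is not a valid deduction in general; tameness only gives the formula $R=\sum(e_i-1)E_i$, and the coefficients must still be computed.)

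The paper fills this gap without any local-structure computation. The branch locus downstairs is the irreducible divisor $B_{[n]}=\{(\xi_n,\xi_{n+1}):\xi_n\text{ non-reduced}\}$, and its preimage in $X$ breaks into \emph{two} irreducible Weil divisors $E^1,E^2$ according to whether the selected point $p$ is or is not the length-two point of $\xi_n$. The degree identity $r_1[K(E^1):K(B_{[n]})]+r_2[K(E^2):K(B_{[n]})]=n$, with the residue-field degrees equal to $1$ and $n-2$, forces ramification indices $2$ and $1$; hence $R=E^1$ exactly. Finally, the paper observes that the argument of Fogarty \cite[Theorem~7.6]{Fogarty2} for $S^{[1,n]}$ carries over verbatim to show that $E^1$ is not $\QQ$-Cartier (though $E^1+E^2$ is). Since $\phi_1^{*}K_Y$ is Cartier, $K_X=\phi_1^{*}K_Y+E^1$ is not $\QQ$-Cartier. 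So the two missing ingredients in your proposal are the explicit decomposition of $\phi_1^{-1}(B_{[n]})$ with its ramification indices, and the recognition that the non-$\QQ$-Cartier-ness of the relevant component is already Fogarty's result for the universal family, pulled back along $\phi_{n+1}$.
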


\begin{proof}
    The forgetful map $\phi_1: S^{[1,n,n+1]} \to S^{[n,n+1]}$ is finite and ramified over the locus \[B_{[n]} = \{(p,\xi_n,\xi_{n+1}): p \subset \xi_n \subset \xi_{n+1} \text{ and }\xi_n \text{ is non-reduced}\}.\]
    There are two Weil divisors that dominate $B_{[n]}$, denoted by $E_{[n,n+1]}^1$ and $E_{[n,n+1]}^2$.
    Each of these generically parameterizes triples where $\xi_n$ consists of $n-2$ reduced points and a double point.
    In $E_{[n]}^2$ ($E_{[n]}^1$), the double point is supported (not) at $p$.
    By an entirely analogous argument to the one in the proof of Theorem 7.6 of \cite{Fogarty2} for the case of $S^{[1,n]}$, these are not $\mathbb{Q}$-Cartier, but their sum is. 
    Let $r_i$ be the ramification index of $E_{[n]}^i$ over $B_{[n]}$, and $K(*)$ be the function field of $*$. From the identity \[r_1\left[K\left(E_{[n]}^1\right):K\left(B_{[n]}\right)\right]+r_2\left[K\left(E_{[n]}^2\right):K\left(B_{[n]}\right)\right] = n,\] we see that $r_1=2$ and $r_2=1$ since $\left[K\left(E_{[n]}^1\right):K\left(B_{[n]}\right)\right]=1$ and $\left[K\left(E_{[n]}^2\right):K\left(B_{[n]}\right)\right]=n-2$.
    Then Riemann-Hurwitz formula for finite maps gives that \[K_{S^{[1,n,n+1]}} =\phi_1^*\left(K_{S^{[n,n+1]}}\right)+ E_{[n]}^1\] so $K_{S^{[1,n,+2]}}$ cannot be $\mathbb{Q}$-Cartier.
\end{proof}

\section{Further geometric results}\label{sec: geo results}
In this section, we study a bit of the birational geometry of $S^{[n,n+1,n+2]}$ when $S$ is a smooth surface.
In particular, we compute its Picard group and its canonical divisor when $h^0(S,\mathcal{O}_S) = 0$.
The results may of course be generalized to the case of surfaces with irregularity with a bit of care.

\begin{thm}\label{thm:PicardGroup}
Let $S$ be a smooth surface with $h^1(S,\mathcal{O}_S) = 0$.
Then \[\mathrm{Pic}\left(S^{[n,n+1,n+2]}\right) \isom \mathrm{Pic}(S)^3 \bigoplus \mathbb{Z}^3\]
\end{thm}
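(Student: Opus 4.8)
The plan is to compute $\pic(S^{[n,n+1,n+2]})$ by exploiting the birational model from Theorem \ref{thm:ResolutionIntro}, namely the two successive blowups $X_2 \to X_1 \to S \times S^{[n,n+1]}$, together with the blowdown $X_2 \to S^{[n,n+1,n+2]}$. Since $S^{[n,n+1,n+2]}$ is normal (it is lci, hence Cohen-Macaulay, and one checks it is regular in codimension one) and $X_2 \to S^{[n,n+1,n+2]}$ is a small resolution by Corollary \ref{cor:TwoStepRational}, the pushforward gives an isomorphism $\pic(X_2)_\QQ \cong \pic(S^{[n,n+1,n+2]})_\QQ$ away from the divisorial contributions; more precisely I would instead work directly on $S^{[n,n+1,n+2]}$ using that it is $\mathrm{Bl}_{Z_{[n,n+1]}}(S \times S^{[n,n+1]})$ and that $Z_{[n,n+1]}$ has two irreducible components $W_1 \cong S^{[1,n,n+1]}$, $W_2 \cong S^{[n,n+1]}$, each of codimension two. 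First I would establish the building block $\pic(S^{[n,n+1]}) \cong \pic(S)^2 \oplus \ZZ$: this follows since $S^{[n,n+1]} = \mathrm{Bl}_{S^{[1,n]}}(S \times S^{[n]})$ with $S^{[1,n]}$ irreducible of codimension two, combined with the classical $\pic(S^{[n]}) \cong \pic(S) \oplus \ZZ$ (generated by pullbacks from $S$ and the half-diagonal $B_n$) and the fact that blowing up an irreducible codimension-two subvariety with irreducible exceptional divisor adds one $\ZZ$ summand (exceptional divisor class) — here one must check irreducibility of that exceptional divisor, which is part (b)-type input available from the Ellingsrud–Strømme criterion.

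The key step is the blowup sequence. For a normal variety $Y$ and the blowup $\pi : \mathrm{Bl}_Z Y \to Y$ along a subscheme $Z$, one has an exact sequence $\bigoplus_j \ZZ E_j \to \pic(\mathrm{Bl}_Z Y) \to \pic(Y) \to 0$ where the $E_j$ range over the irreducible components of the exceptional divisor; when each exceptional component is irreducible and distinct, and $\pi$ has connected fibers so $\pi^*$ is injective and splits, this becomes $\pic(\mathrm{Bl}_Z Y) \cong \pic(Y) \oplus \ZZ^{(\#\text{exceptional components})}$. I would apply this with $Y = S \times S^{[n,n+1]}$, whose Picard group is $\pic(S) \oplus \pic(S^{[n,n+1]}) \cong \pic(S) \oplus \pic(S)^2 \oplus \ZZ = \pic(S)^3 \oplus \ZZ$ (using the Künneth/see-saw argument valid since $h^1(S,\OO_S)=0$ forces $\pic^0$ to vanish and there are no nontrivial line bundles "mixing" the factors). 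Blowing up $Z_{[n,n+1]}$, which has exactly two irreducible components $W_1$ and $W_2$, contributes $\ZZ^2$ provided the exceptional divisor over $Z_{[n,n+1]}$ has exactly two irreducible components — one over each $W_i$. Over the generic point of $W_i$ the ideal sheaf is locally a complete intersection (codimension two, Cohen-Macaulay), so the exceptional fiber is a $\PP^1$-bundle over a dense open of $W_i$, hence that piece of the exceptional divisor is irreducible; since $W_1 \neq W_2$ these two pieces are distinct divisors. This yields $\pic(S^{[n,n+1,n+2]}) \cong \pic(S)^3 \oplus \ZZ \oplus \ZZ^2 = \pic(S)^3 \oplus \ZZ^3$.

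The main obstacle I anticipate is twofold. First, the blowup Picard-group formula above is cleanest when the center is smooth (or at least when the exceptional divisor is well-behaved); here $Z_{[n,n+1]}$ is reducible and its components meet, so I would need to argue carefully — most safely by passing to the resolution $X_2$ (where both blown-up loci $W_1$ and $\overline{W_2}$ are smooth by Theorem \ref{thm:ResolutionIntro}), computing $\pic(X_2) \cong \pic(S \times S^{[n,n+1]}) \oplus \ZZ^2$ via two honest smooth blowups, and then showing the small contraction $X_2 \to S^{[n,n+1,n+2]}$ does not change the Picard group. That last point requires identifying which curve classes are contracted and checking that the contracted locus has codimension $\geq 2$ with the contracted divisor classes being trivial — i.e. that no exceptional divisor of $X_2 \to X_1$ gets contracted, only proper-transform adjustments occur, so that $\pic$ is preserved under the small map. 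Second, one must handle the $\pic^0$ and torsion bookkeeping: the hypothesis $h^1(S,\OO_S) = 0$ is exactly what kills $\pic^0(S)$ and, via the known structure of $\pic(S^{[n]})$, of all the intermediate Hilbert schemes, so that all the "$\oplus \ZZ$" summands are genuinely free and the Künneth splitting for the product is clean; I would invoke the irregularity-zero hypothesis at each such step and flag that the general-irregularity case only changes the free part.
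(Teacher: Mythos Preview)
Your proposal contains two errors that happen to cancel numerically, so you arrive at the correct group by accident.

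First, a counting slip: you write $\pic(S^{[n,n+1]}) \cong \pic(S)^2 \oplus \ZZ$, but the correct answer (and the one the paper quotes from \cite{RY}) is $\pic(S)^2 \oplus \ZZ^2$. One $\ZZ$ comes from the half-diagonal in $\pic(S^{[n]}) \cong \pic(S)\oplus\ZZ$, and a second $\ZZ$ comes from the exceptional divisor of $S^{[n,n+1]} \to S\times S^{[n]}$. Consequently $\pic(S\times S^{[n,n+1]}) \cong \pic(S)^3 \oplus \ZZ^2$, not $\pic(S)^3\oplus\ZZ$.

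Second, and more seriously, your blowup formula and your treatment of the small contraction conflate the Picard group with the Weil divisor class group. The sequence $\bigoplus_j \ZZ E_j \to (\text{divisor classes of blowup}) \to (\text{divisor classes of }Y)\to 0$ you invoke is a statement about \emph{class groups}; for the Picard group of $\mathrm{Bl}_Z Y$ one only knows that the \emph{total} exceptional divisor $E=\sum_j E_j$ is Cartier (it is cut out by the pullback of $\II_Z$), while its individual components $E_j$ need not be. Equivalently, your expectation that ``the small contraction $X_2 \to S^{[n,n+1,n+2]}$ does not change the Picard group'' is false: a small contraction preserves the class group (being an isomorphism in codimension one) but \emph{strictly} drops the Picard rank, since the target of a nontrivial small contraction is never $\QQ$-factorial. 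This is precisely how the paper argues. They compute, via the two smooth blowups, that $\pic(X_2)=\mathrm{Cl}(X_2)=\mathrm{Cl}(S^{[n,n+1,n+2]})\cong\pic(S)^3\oplus\ZZ^4$; they then observe that the pullbacks from $S\times S^{[n,n+1]}$ together with the (single, Cartier) exceptional divisor of $S^{[n,n+1,n+2]}\to S\times S^{[n,n+1]}$ give a subgroup $\pic(S)^3\oplus\ZZ^3$ of $\pic(S^{[n,n+1,n+2]})$; and finally they use the existence of the small resolution to conclude that $S^{[n,n+1,n+2]}$ is not $\QQ$-factorial, so the remaining $\ZZ$ in the class group cannot lie in the Picard group. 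In your write-up, the missing $\ZZ$ from the first error is exactly compensated by the spurious extra $\ZZ$ you add in the second, which is why the final answer looks right.
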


\begin{proof}
By \cite{RY}, $\mathrm{Pic}\left(S^{[n,n+1]}\right) = \mathrm{Pic}(S)^2 \bigoplus \mathbb{Z}^2$.

The exceptional divisor of the blow up is irreducible by Theorem \ref{thm:TwoStepIrreducible},
so \[\mathrm{Pic}(\mathrm{Bl}_{W_1}) \isom \mathrm{Pic}(S\times S^{[n,n+1]}) \oplus \mathbb{Z} \isom \mathrm{Pic}(S)^3\oplus \mathbb{Z}^3.\]
Then $\widetilde{S^{[n,n+1,n+2]}}$ is the blow up of $\mathrm{Bl}_{W_1}$ along a smooth irreducible subvariety of codimension 2, so we have \[\mathrm{Pic}\left(\widetilde{S^{[n,n+1,n+2]}}\right) \isom \mathrm{Pic}(\mathrm{Bl}_{W_1})\oplus \mathbb{Z} \isom \mathrm{Pic}(S)^3 \bigoplus \mathbb{Z}^4.\]
Since $\widetilde{S^{[n,n+1,n+2]}}$ is smooth, this is also its Weil divisor group.
The resolution of singularities is an isomorphism in codimension two so that is also the Weil divisor group of $S^{[n,n+1,n+2]}$.

As the singularities of $S^{[n,n+1,n+2]}$ are in codimension two and it is Cohen-Macaulay by Theorem 5.8.6 of \cite{EGAIV}, it is normal.
Since it is normal, the Picard group injects into the Weil divisor group.

Since the exceptional divisor is Cartier, the Picard group of $S^{[n,n+1,n+2]}$ contains $\mathrm{Pic}(S)^3 \bigoplus \mathbb{Z}^3$ where those are generated by the exceptional divisor and the pullbacks from $S\times S^{[n,n+1]}$.
This part of the Picard group is exactly one copy of $\mathbb{Z}$ smaller than the Weil divisor group.
Since the resolution of $S^{[n,n+1,n+2]}$ is a small contraction, it is not $\mathbb{Q}$-factorial.
This implies that the remaining factor of the Weil divisor group must not have any $\mathbb{Q}$-Cartier divisors in it.
As a result, the Picard group is exactly what was claimed.
\end{proof}

We can also give an alternative description of Picard group.
It is easy to see that $\pi_n^*$, $\pi_{n+1}^*$, and $\pi_{n+2}^*$ are injective maps of Picard groups.
Further, it is not difficult to see that the pullbacks of divisors from any pair of distinct Hilbert schemes are numerically distinct.
In other words, the map $(\pi_n^*,\pi_{n+1}^*,\pi_{n+2}^*)$ is injective.
Since the Picard groups are free group of the same size, we get
\[\mathrm{Pic}\left(S^{[n,n+1,n+2]}\right) \isom \mathrm{Pic}\left(S^{[n]}\right)\oplus \mathrm{Pic}\left(S^{[n+1]}\right) \oplus \mathrm{Pic}\left(S^{[n+2]}\right).\] 

\begin{prop}
The class of the canonical divisor of $S^{[n,n+1,n+2]}$ is \[K_{S^{[n,n+1,n+2]}} = \pi_{n+2}^*(K_S)+\pi_{n+2}^*(B[n+2])-\pi_{n}^*(B[n]).\]
\end{prop}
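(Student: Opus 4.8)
Set $H=S^{[n,n+1,n+2]}$ and $Y=S\times S^{[n,n+1]}$, and let $f=(\res_{[n+1,n+2]},\phi_{n+2}):H\to Y$ be the blowdown of $Z:=Z_{[n,n+1]}$, with exceptional Cartier divisor $E$, so $\OO_H(-E)=\II_Z\cdot\OO_H$. Since $H$ is lci (Proposition~\ref{prop:TwoStepIsLCI}) it is Gorenstein, so $K_H$ is a genuine Cartier class; I read $\pi_{n+2}^*(K_S)$ in the statement as $\pi_{n+2}^*K_{S^{[n+2]}}$, recalling that $K_{S^{[m]}}$ is the class induced by $K_S$ (crepance of Hilbert--Chow) and that $2B[m]$ is the class of the reduced Hilbert--Chow exceptional divisor on $S^{[m]}$. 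The plan is to compute $K_H$ from the blowup $f$ and then trade the contributions of the ``middle'' scheme $\xi_{n+1}$ for boundary classes, so that everything collapses to $\pi_{n+2}$ and $\pi_n$. The first ingredient is the blowup identity
\[
K_H=f^*K_Y+E=\res_{[n+1,n+2]}^*K_S+\phi_{n+2}^*K_{S^{[n,n+1]}}+E .
\]
I would prove this by adjunction from the presentation of $H$ in the proof of Proposition~\ref{prop:TwoStepIsLCI}: globally $\II_Z$ has a two-term locally free resolution $0\to\mathcal A\to\mathcal B\to\II_Z\to 0$ (take $\mathcal B=\OO_Y^{\oplus N}$ for $N\gg0$; the kernel is locally free since $Z$ is Cohen--Macaulay of codimension two in the smooth $Y$), and $H$ is cut out in $\PP(\mathcal B)$ by a regular section of $\pi^*\mathcal A^\vee(1)$. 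Adjunction then gives $\omega_H\cong f^*\omega_Y\otimes(\det\mathcal B\otimes\det\mathcal A^\vee)|_H\otimes\OO_H(-1)$, where $\det\mathcal B\otimes\det\mathcal A^\vee=\det\II_Z=\OO_Y$ (a line bundle trivial away from the codimension-two locus $Z$) and $\OO_H(1)=\II_Z\cdot\OO_H=\OO_H(-E)$; this yields the identity. (Alternatively, push the usual blowup formula for $K_{X_2}$ down the small resolution $X_2\to H$ of Theorem~\ref{thm:ResolutionIntro}.)

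Next I would identify $E$ as pulled back from a one-step exceptional divisor. Let $\bar\phi_n:Y\to S\times S^{[n+1]}$ be $(p,\xi_n,\xi_{n+1})\mapsto(p,\xi_{n+1})$; since $Z_{[n,n+1]}$ is the universal family of the $\xi_{n+1}$, it is the scheme-theoretic preimage $\bar\phi_n^{-1}(Z_{[n+1]})$ of the universal subscheme $Z_{[n+1]}=S^{[1,n+1]}$. Writing $\mathrm{Exc}_{[m]}$ for the exceptional divisor of the one-step blowup $S^{[m,m+1]}=\mathrm{Bl}_{S^{[1,m]}}(S\times S^{[m]})$ (Section~\ref{sec: smoothness of one step}), the universal property of the blowup applied to the invertible sheaf $\II_{Z_{[n+1]}}\cdot\OO_H=\II_Z\cdot\OO_H$ produces a morphism $H\to S^{[n+1,n+2]}$ over $S\times S^{[n+1]}$ which coincides with $\phi_n$ (they agree over the dense locus where the blowups are isomorphisms), whence $E=\phi_n^*\mathrm{Exc}_{[n+1]}$ as Cartier divisors. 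I would then use the one-step boundary relation $\mathrm{Exc}_{[m]}=\pi_{m+1}^*B[m+1]-\pi_m^*B[m]$ on $S^{[m,m+1]}$ -- standard, cf.~\cite{RY}; it follows because $\pi_{m+1}:S^{[m,m+1]}\to S^{[m+1]}$, generically finite of degree $m+1$, ramifies to order two along $\mathrm{Exc}_{[m]}$, while $2B[\ell]$ is the reduced Hilbert--Chow exceptional divisor, so comparing multiplicities of $\pi_{m+1}^*(2B[m+1])$ and $\pi_m^*(2B[m])+2\mathrm{Exc}_{[m]}$ along the two components of the non-reduced locus in $S^{[m,m+1]}$ gives equality -- and pull it back along $\phi_n$ (case $m=n+1$) and $\phi_{n+2}$ (case $m=n$):
\[
E=\pi_{n+2}^*B[n+2]-\pi_{n+1}^*B[n+1],\qquad \phi_{n+2}^*\mathrm{Exc}_{[n]}=\pi_{n+1}^*B[n+1]-\pi_n^*B[n].
\]

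For the last step I would rewrite $\phi_{n+2}^*K_{S^{[n,n+1]}}$. The one-step canonical formula $K_{S^{[m,m+1]}}=\res_{[m,m+1]}^*K_S+\pi_m^*K_{S^{[m]}}+\mathrm{Exc}_{[m]}$ follows from the blowup formula (the discrepancy along $\mathrm{Exc}_{[m]}$ is $+1$ because $\mathrm{Exc}_{[m]}$ is irreducible, $S^{[1,m]}$ being irreducible); combined with the residual-map compatibility $\res_{[m,m+1]}^*K_S+\pi_m^*K_{S^{[m]}}=\pi_{m+1}^*K_{S^{[m+1]}}$ (from crepance of the Hilbert--Chow morphisms and additivity of the canonical class of $S^{(\bullet)}$ under the addition maps $S^{(m)}\times S\to S^{(m+1)}$) it gives $K_{S^{[m,m+1]}}=\pi_{m+1}^*K_{S^{[m+1]}}+\mathrm{Exc}_{[m]}$. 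Pulling this back along $\phi_{n+2}$ (case $m=n$) yields $\phi_{n+2}^*K_{S^{[n,n+1]}}=\pi_{n+1}^*K_{S^{[n+1]}}+\pi_{n+1}^*B[n+1]-\pi_n^*B[n]$. Substituting this and the expression for $E$ into the first display, the $\pi_{n+1}^*B[n+1]$ terms cancel and
\[
K_H=\res_{[n+1,n+2]}^*K_S+\pi_{n+1}^*K_{S^{[n+1]}}+\pi_{n+2}^*B[n+2]-\pi_n^*B[n] ;
\]
a final application of the same compatibility (case $m=n+1$, pulled back along $\phi_n$), $\res_{[n+1,n+2]}^*K_S+\pi_{n+1}^*K_{S^{[n+1]}}=\pi_{n+2}^*K_{S^{[n+2]}}$, gives $K_H=\pi_{n+2}^*K_{S^{[n+2]}}+\pi_{n+2}^*B[n+2]-\pi_n^*B[n]$, which is the asserted formula.

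The hard part will be the boundary bookkeeping: keeping every coefficient equal to $1$ forces one to track simultaneously the order-two ramification of the maps $\pi_{m+1}:S^{[m,m+1]}\to S^{[m+1]}$ along $\mathrm{Exc}_{[m]}$ and the factor of two between the reduced Hilbert--Chow exceptional divisor and $2B[m]$ -- the two twos cancel, but this is exactly where an error of a factor of $2$ would slip in. The identification $E=\phi_n^*\mathrm{Exc}_{[n+1]}$ also deserves care, but it reduces to the essentially definitional fact that $Z_{[n,n+1]}$ is the pullback of the universal subscheme over $S^{[n+1]}$, plus a universal-property argument; and the residual-map compatibility for canonical classes is standard (crepance of Hilbert--Chow). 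The adjunction computation of the first step is then routine.
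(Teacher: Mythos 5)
Your argument is correct, and it reaches the formula by a genuinely different route for the key step. The paper computes the canonical class on the small resolution $X_2$ (two successive blowups of smooth irreducible centers, each contributing discrepancy $+1$), descends to $S^{[n,n+1,n+2]}$ by crepancy of the small resolution, and then asserts rather tersely that the resulting sum of exceptional divisors collapses to $\pi_{n+2}^*(B[n+2])-\pi_n^*(B[n])$. You instead work directly on $H=S^{[n,n+1,n+2]}$: adjunction for the lci presentation of $\PP(\II_{Z_{[n,n+1]}})$ inside $\PP(\mathcal{B})$ gives $K_H=f^*K_Y+E$ with $\OO_H(-E)=\II_{Z_{[n,n+1]}}\cdot\OO_H$, and the identification $E=\phi_n^*\mathrm{Exc}_{[n+1]}$ (via the universal property of the blowup, using that $Z_{[n,n+1]}$ is the scheme-theoretic preimage of the universal family $Z_{[n+1]}$) together with the explicit relation $\mathrm{Exc}_{[m]}=\pi_{m+1}^*B[m+1]-\pi_m^*B[m]$ makes the boundary bookkeeping transparent. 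What your route buys: it never needs the resolution of Theorem \ref{thm:ResolutionIntro} (only irreducibility and the lci/Gorenstein structure from Proposition \ref{prop:TwoStepIsLCI}), and it sidesteps having to determine the multiplicities of the two components of the exceptional locus inside the Cartier divisor $E$ --- a point the paper's simplification glosses over. What the paper's route buys is economy: it reuses the already-constructed resolution and the blowup-of-smooth-center formula verbatim. Both arguments ultimately rest on the same inputs (crepancy of Hilbert--Chow and the compatibility $\res_{[m,m+1]}^*K_S+\pi_m^*K_{S^{[m]}}=\pi_{m+1}^*K_{S^{[m+1]}}$, which the paper also uses without comment). Two cosmetic points: your global two-term resolution should use a sum of sufficiently ample line bundles rather than $\OO_Y^{\oplus N}$ (or simply work locally, since the discrepancy computation is local), and the $+1$ discrepancy for the one-step blowup follows from generic smoothness of the center $S^{[1,m]}$ along the image of the irreducible exceptional divisor, not from irreducibility of $\mathrm{Exc}_{[m]}$ alone.
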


\begin{proof}
The Hilbert scheme is a crepant resolution of the symmetric product \cite{Beauville} so its canonical divisor is $K_S[n] = \mathrm{hc}^*(K_S^{\boxtimes n})$ where $\boxtimes$ means the descent to the symmetric product of the tensor product of the pullback of $K_S$ along each projection of $S^n$ to $S$. 
As $S^{[n,n+1]}$ is the blow-up of $S \times S^{[n]}$ along a codimension two subvariety which is smooth in codimension one (codimension three in the product), its canonical divisor is \[K_{S^{[n,n+1]}} = \mathrm{res}^*(K_S)+\pi_n^*(K_S[n])+E_{(\mathrm{res},\pi_n)} = \pi_{n+1}^*(K_S[n+1])+E_{(\mathrm{res},\pi_n)}.\] 
Since we are blowing up codimension two varieties which are smooth in codimension one (codimension three in the product), we see that $\widetilde{S^{[n,n+1,n+2]}}$ has canonical bundle
\[K_{\widetilde{S^{[n,n+1,n+2]}}} = \mathrm{res}_{[n+1,n+2]}^*(K_S)+\phi_{[n+2]}^*(K_{S^{[n,n+1]}})+E_1+E_2\] where $E_1$ and $E_2$ are the exceptional divisors of the successive blow-ups in the construction of $\widetilde{S^{[n,n+1,n+2]}}$.
Since the resolution is a small resolution, it is crepant so the canonical is unchanged in the Picard group of $S^{[n,n+1,n+2]}$.
On there, this can be simplified to 
\[K_{S^{[n,n+1,n+2]}} = \pi_{n+2}^*(K_S)+\pi_{n+2}^*(B[n+2])-\pi_{n}^*(B[n])\] since $\pi_{n+2}^*(B[n+2])$ is the union of $E_1$, $E_2$, the pullback of the exceptional divisor of the blow-up defining $S^{[n,n+1]}$, and $\pi_{n}^*(B[n])$.
\end{proof}



\bibliographystyle{alphanum}
\bibliography{NestedHilbertSchemesBib}
\end{document}